\newtheorem{thm}{Theorem}[section]
\newtheorem{prop}[thm]{Proposition}
\newtheorem{lem}[thm]{Lemma}
\newtheorem{false statement}{False statement}
\newtheorem{cor}[thm]{Corollary}
\newtheorem{fact}[thm]{Fact}
\theoremstyle{definition}
\newtheorem{defn}[thm]{Definition}
\newtheorem{conj}[thm]{Conjecture}
\makeatletter \@addtoreset{equation}{section}
\def\hh{\mathcal{H}}
\def\hht{\mathcal{T}}
\def\hf{\mathcal{F}}
\def\hg{\mathcal{G}}
\def\hb{\mathcal{B}}
\def\hr{\mathcal{R}}
\begin{document}
\title{\bf\Large On resilient hypergraphs}
\date{}
\author{Peter Frankl$^1$, Jian Wang$^2$\\[10pt]
$^{1}$R\'{e}nyi Institute, Budapest, Hungary\\[6pt]
$^{2}$Department of Mathematics\\
Taiyuan University of Technology\\
Taiyuan 030024, P. R. China\\[6pt]
E-mail:  $^1$frankl.peter@renyi.hu, $^2$wangjian01@tyut.edu.cn
}
\maketitle

\begin{abstract}
The matching number of a $k$-graph is the maximum number of pairwise disjoint edges in it. The $k$-graph is called $t$-resilient if omitting $t$ vertices never decreases its matching number. The complete $k$-graph on $sk+k-1$ vertices has matching number $s$ and it is easily seen to be $(k-1)$-resilient. We conjecture that this is maximal for $k=3$ and $s$ arbitrary. The main result verifies this conjecture for $s=2$. Then Theorem \ref{thm-main3} provides a considerable improvement on the known upper bounds for $s\geq 3$.
\end{abstract}

\section{Introduction}

Let $[n]=\{1,2,\ldots,n\}$ be the standard $n$-set.  Let $2^{[n]}$ be the family of all subsets of $[n]$ and $\binom{[n]}{k}$  the family of all $k$-subsets of $[n]$. A family $\hf\subset \binom{[n]}{k}$ is called a {\it $k$-uniform hypergraph} or a {\it $k$-graph}. For $\hf\subset 2^{[n]}$, the {\it matching number} $\nu(\hf)$ is defined as the maximum number of pairwise disjoint members in $\hf$.

One of the most important open problems in extremal set theory is the following.
\begin{conj}[Erd\H{o}s Matching Conjecture \cite{E65}]\label{conj-1}
Suppose that $s$ is a positive integer, $n\geq(s+1)k$ and $\hf\subset {[n]\choose k}$ satisfies $\nu(\hf)\leq s$. Then
\begin{align}\label{1-1}
|\hf|\leq\max\left\{{n\choose k}-{n-s\choose k},{(s+1)k-1\choose k}\right\}.
\end{align}
\end{conj}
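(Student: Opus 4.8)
The Erd\H{o}s Matching Conjecture is a notorious open problem; what I can realistically offer is the line of attack that settles it in several ranges, together with the step at which I expect it to stall. The plan is to begin with \emph{shifting}: since the compressions $S_{ij}$ ($1\le i<j\le n$) preserve $|\hf|$ and never increase $\nu(\hf)$, one may assume $\hf$ is left-compressed. This is useful for two reasons. First, a shifted $\hf$ is controlled by its ``largest'' edges. Second, the matching number interacts cleanly with the link decomposition $\hf=\hf(\bar n)\cup\{e\cup\{n\}:e\in\hf(n)\}$, where $\hf(n)=\{A\setminus\{n\}:n\in A\in\hf\}$ and $\hf(\bar n)=\{A\in\hf:n\notin A\}$; this sets up an induction on $s$ and on $n$ in which one peels off a vertex of maximum degree, and the two conjectured extremal families --- the ``cover'' family $\{A:A\cap[s]\ne\emptyset\}$ and the ``clique'' $\binom{[(s+1)k-1]}{k}$ --- appear as its natural fixed points.

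\emph{The dense regime.} When $n$ is large enough that $\binom{n}{k}-\binom{n-s}{k}$ is the dominant term in \eqref{1-1}, the target is the cover family, and the sharp bound comes from the Huang--Loh--Sudakov random-permutation (deletion) argument: place $[n]$ in a uniformly random cyclic order, observe that a family with $\nu(\hf)\le s$ contains only boundedly many edges forming an arc, and average the resulting expectation over all arrangements to bound $|\hf|$. Erd\H{o}s's original argument \cite{E65} covers $n\ge n_0(k,s)$; Huang--Loh--Sudakov push this down to $n$ polynomial in $k$ and $s$; Frankl's refinement reaches the linear bound $n\ge(2s+1)k-s$. A complementary route is the fractional relaxation, which is known (Alon--Frankl--Huang--R\"odl--Ruci\'nski--Sudakov), but the available transference to the integral problem again needs $n$ comfortably above $(s+1)k$.

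\emph{The sparse regime, and the main obstacle.} The hard range is $n$ close to $(s+1)k$, where the clique $\binom{[(s+1)k-1]}{k}$ is extremal. For $k=2$ this is the Erd\H{o}s--Gallai theorem; for $k=3$ it was proved by Frankl, combining a sharpened shifting analysis with stability input going back to Frankl--R\"odl--Ruci\'nski and \L{}uczak--Mieczkowska. For $k\ge 4$ and $n$ near $(s+1)k$ the conjecture is open, and the obstruction I expect to be decisive is exactly that here neither shifting alone nor the deletion method is tight: one must describe how a near-extremal shifted $\hf$ is organised around the kernel $[(s+1)k-1]$, and the case analysis on links explodes because a single edge may use up to $k-1$ vertices outside the kernel, so there is no small ``boundary'' to track. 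Bridging the interval from $n=(s+1)k$ to $n=(2s+1)k-s$ --- equivalently, a sufficiently strong stability theorem in the clique regime valid for every $k$ --- is the step I do not see how to carry out in general, and it is precisely this difficulty that makes the resilience variant studied below the more tractable target.
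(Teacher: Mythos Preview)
The statement you were asked to prove is labelled \textbf{Conjecture} in the paper for good reason: it is the Erd\H{o}s Matching Conjecture, and the paper does \emph{not} prove it. It is quoted only as background; the paper's own contributions concern the resilience variant (Theorems~\ref{thm-main} and~\ref{thm-main3}), not \eqref{1-1} itself. So there is no ``paper's own proof'' to compare your proposal against.

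Your write-up is an accurate survey of what is known --- shifting, the deletion/averaging argument of Huang--Loh--Sudakov, Frankl's linear bound $n\ge(2s+1)k$, the $k=3$ resolution --- and you are honest that the sparse regime near $n=(s+1)k$ for $k\ge4$ is the genuine obstruction. That diagnosis is correct and matches the discussion surrounding Conjecture~\ref{conj-1} in the introduction. But to be clear: what you have written is not a proof, nor a proof sketch with a fixable gap; it is an explanation of why no proof currently exists. That is the appropriate response to being handed an open problem, but it should not be presented as a ``proof proposal'' for the full statement.
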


The two simple constructions showing that \eqref{1-1} is best possible (if true) are
$$\mathcal{E}(n,k,s)=\left\{E\in{[n]\choose k}\colon E\cap [s]\neq \emptyset\right\} \text{ and } {[(s+1)k-1]\choose k}.$$
Erd\H{o}s proved that \eqref{1-1} is true and up to isomorphic $\mathcal{E}(n,k,s)$ is the only optimal family provided that $n>n_0(k,s)$. The bounds for $n_0(k,s)$ were subsequently improved by Bollob\'{a}s, Daykin and Erd\H{o}s \cite{BDE}, Huang, Loh and Sudakov \cite{HLS}. The current best bounds establish \eqref{1-1} for $n>(2s+1)k$ (\cite{F13}) and for $s>s_0$, $n>\frac{5}{3}sk$ (\cite{FK}).

The $k=3$ case of Erd\H{o}s Matching Conjecture was settled by the first author \cite{F12}.

\begin{thm}[\cite{F12}]
Let $\hf\subset \binom{[n]}{3}$ be a family with matching number $s$. If $n\geq 3(s+1)$, then
\[
|\hf|\leq\max\left\{{n\choose 3}-{n-s\choose 3},{3s+2\choose 3}\right\}.
\]
\end{thm}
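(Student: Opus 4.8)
The plan is to induct on $s$, with the Erd\H{o}s--Ko--Rado theorem as the base case $s=1$: a $3$-uniform intersecting family on $[n]$ with $n\ge 6$ has at most $\binom{n-1}{2}=\binom n3-\binom{n-1}3$ edges, and $\binom{n-1}2\ge\binom 53=10$ with equality only at $n=6$, so the asserted bound holds. For the inductive step I first reduce to the case that $\hf$ is \emph{shifted} (left-compressed): shifting preserves $|\hf|$ and does not increase $\nu(\hf)$, and if $\nu$ drops below $s$ the target bound only gets smaller, so the conclusion follows from the inductive hypothesis; hence one may assume $\hf$ shifted with $\nu(\hf)=s$ (and, if convenient, maximal with this property).

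The engine is the following dichotomy for shifted $\hf$. If every edge meets $[s]$, then $|\hf|\le\bigl|\{E\in\binom{[n]}3:E\cap[s]\ne\emptyset\}\bigr|=\binom n3-\binom{n-s}3$ and we are done. Otherwise $\hf$ has an edge disjoint from $[s]$; repeatedly shifting it downward forces $\{s+1,s+2,s+3\}\in\hf$, and since a shifted family is a downset in the coordinatewise order on sorted triples this yields $\binom{[s+3]}3\subseteq\hf$. So in the remaining case $\hf$ ``contains a clique'' and resembles the second extremal family $\binom{[3s+2]}3$; this is where the work lies.

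To treat it I would peel off a vertex $n$ of minimum degree and write $|\hf|=|\hg|+|\hl|$, where $\hg=\{F\in\hf:n\notin F\}\subseteq\binom{[n-1]}3$ and $\hl=\{F\setminus\{n\}:n\in F\in\hf\}$ is the link of $n$, a graph on $[n-1]$. Two facts are available: $|\hl|=\deg_\hf(n)\le\tfrac3n|\hf|$ since $n$ has minimum degree; and each pair of $\hl$ must meet \emph{every} maximum matching of $\hg$, for otherwise $\{n\}\cup P$ extends such a matching to size $s+1$. Now split on $\nu(\hg)$. If $\nu(\hg)=s-1$, the inductive hypothesis bounds $|\hg|$, while the transversal condition bounds $|\hl|$ by the number of pairs meeting the $3(s-1)$ vertices of one fixed maximum matching of $\hg$; adding these estimates and using $\binom{n-1}3+\binom{n-1}2=\binom n3$ together with $\binom n3-\binom{n-s}3=\sum_{i=0}^{s-1}\binom{n-1-i}2$ gives at most the target. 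If $\nu(\hg)=s$, one applies the induction on $n$ (same $s$) to $\hg$; but then one would need $|\hl|$ to be as small as $\binom{n-1}2-\binom{n-1-s}2$ --- essentially that only pairs meeting an $s$-set can hit all maximum matchings of $\hg$ --- and this is \emph{false} without extra information when $\hg$ is close to the clique $\binom{[3s+2]}3$, in which case $\hl$ must instead be shown to be tiny.

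That last point is the main obstacle, and it is inseparable from a second one: the optimum changes shape across the crossover value $n^\ast(s)$ (linear in $s$), being the star $\mathcal E(n,3,s)$ for $n\ge n^\ast$ and the clique $\binom{[3s+2]}3$ for $3(s+1)\le n< n^\ast$. For $n\gg s$ the star bound follows from classical estimates (Erd\H{o}s; Bollob\'as--Daykin--Erd\H{o}s; Huang--Loh--Sudakov; the $n>(2s+1)k$ bound), so the real content is pushing this down to $n^\ast$, and then proving the clique bound on the bounded range $3(s+1)\le n< n^\ast$, where the decisive input is a robustness statement for perfect matchings in small $3$-graphs (a $3$-graph on $3s+3$ vertices with more than $\binom{3s+2}3$ edges contains a perfect matching, and similarly for slightly larger universes; this already is the $n=(s+1)k$ case). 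Controlling the link of a low-degree vertex against all maximum matchings of $\hf-n$ \emph{uniformly} over the whole range, down to the exact threshold $n=3(s+1)$ rather than merely for large $n$, is precisely where the difficulty concentrates.
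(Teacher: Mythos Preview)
The paper does not prove this theorem: it is quoted verbatim from \cite{F12} as background, with no argument supplied. So there is no ``paper's own proof'' to compare against here; the present paper only uses this result, it does not establish it.

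As for your proposal itself: it is not a proof but an honest reconnaissance of the terrain. You set up the standard opening (shifting, the $[s]$-cover dichotomy, the clique subcase) correctly, and your vertex-deletion scheme with the link graph is a natural line of attack. But you then state outright that the key step fails: when $\nu(\hg)=s$ and $\hg$ is close to the clique, you need $|\hl|$ to be tiny, and you say this ``is \emph{false} without extra information'' and that controlling the link uniformly down to $n=3(s+1)$ ``is precisely where the difficulty concentrates.'' That is an accurate diagnosis, but a diagnosis is not a cure. What you have written would be a fine opening paragraph of a research plan; it is not a proof, because the central inequality in the clique regime is never established.

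If you want to close the gap, the actual argument in \cite{F12} does proceed via shifting, but the induction is organized differently and leans on a sequence of structural lemmas for shifted families near the threshold (in particular a careful analysis of how many edges can avoid each element of a fixed maximum matching, combined with stability for the small-$n$ range). Your outline would need an analogous structural input; without it the $\nu(\hg)=s$ branch remains open.
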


Let us recall some useful notations: For $A\subset B$ and a family $\hf$, define
\[
\hf(A,B) = \left\{F\setminus A\colon F\in \hf,\ F\cap B=A\right\}.
\]
We also use $\hf(A)=\hf(A,A)$ and $\hf(\bar{A})=\hf(\emptyset,A)$.

\begin{defn}
Let $\hf\subset \binom{[n]}{k}$ be a family. We say that $\hf$ is $t$-resilient if $\nu(\hf(\bar{T}))=\nu(\hf)$ for any $T\subset \binom{[n]}{\leq t}$.
\end{defn}

If $T\in \hht$ is an edge then $\nu(\hht(\bar{T}))<\nu(\hht)$ is obvious. That is, no $k$-graph is $k$-resilient. In this paper we focus
on $(k-1)$-resilient $k$-graphs. The case of ordinary graphs, that is, $k=2$ is well-understood (cf. \cite{LP}). In  particular, for a 1-resilient 2-graph $\hg$ with matching number $s$, $|\hg|\leq \binom{2s+1}{2}$ holds and $K_{2s+1}$ is the only graph attaining equality.

\begin{conj}
Suppose that $\hht$ is a 2-resilient 3-graph with $\nu(\hht)=s$. Then
\begin{align}\label{ineq-1.1}
|\hht| \leq \binom{3s+2}{3}.
\end{align}
\end{conj}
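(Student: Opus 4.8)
The plan is an induction on $s$, with $s=1$ and $s=2$ as base cases (the case $s=1$ reduces, via $\tau(\hht)=3\nu(\hht)=3$, to the extremal problem for intersecting $3$-graphs of cover number $3$; the case $s=2$ is the main theorem of this paper). Two reductions set things up. First, since deleting any two vertices must still leave $\ge 3s$ vertices to carry a matching of size $s$, one has $|V(\hht)|\ge 3s+2$; if $|V(\hht)|=3s+2$ then $\hht\subset\binom{V(\hht)}{3}$ and \eqref{ineq-1.1} is immediate, so we may assume $|V(\hht)|\ge 3(s+1)$. Second, if $C$ is a minimum cover of $\hht$ and $u,v\in C$, then $C\setminus\{u,v\}$ covers $\hht(\overline{\{u,v\}})$, so $s=\nu(\hht(\overline{\{u,v\}}))\le\tau(\hht)-2$; hence $2$-resilience forces
\[
\tau(\hht)\ge s+2.
\]
In particular $\hht$ is not contained in $\mathcal{E}(n,3,s)$, nor in any family all of whose edges meet a fixed $s$-set.

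Because $|V(\hht)|\ge 3(s+1)$, the theorem of \cite{F12} quoted above gives $|\hht|\le\max\{\binom n3-\binom{n-s}3,\binom{3s+2}3\}$, so it only remains to exclude $|\hht|>\binom{3s+2}3$ in the range where $\binom n3-\binom{n-s}3$ is the larger term. For this I would use the stability theory for \eqref{1-1} (\cite{BDE,F13,FK}): if $\nu(\hht)=s$ and $|\hht|$ is that close to $\binom n3-\binom{n-s}3$, then $\hht$ is close in a quantifiable sense to $\mathcal{E}(n,3,s)$ and, in particular, has a cover of size at most $s+1$ — contradicting the displayed inequality once the error terms are beaten. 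Turning this into a sharp contradiction is the first point at which genuine work is required. (Equivalently, it would suffice to prove the purely Tur\'an-type statement that $\binom{3s+2}3$ is the maximum number of edges of a $3$-graph with $\nu=s$ and $\tau\ge s+2$; this looks to be of difficulty comparable to \eqref{1-1} itself with a cover constraint added.)

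In the remaining small-$n$ range I would run the induction by fixing a maximum matching $M=\{e_1,\dots,e_s\}$, writing $U=e_1\cup\dots\cup e_s$ and $W=V(\hht)\setminus U$ (so $|U|=3s$, $|W|\ge 3$), and classifying edges by $|F\cap U|$: every edge meets $U$, so $\hht=\hht^{(3)}\cup\hht^{(2)}\cup\hht^{(1)}$ with $\hht^{(j)}=\{F\in\hht\colon|F\cap U|=j\}$. As $\binom{3s+2}{3}=\binom{3s}{3}+2\binom{3s}{2}+3s$ and trivially $|\hht^{(3)}|\le\binom{3s}{3}$, the target becomes $|\hht^{(2)}|\le 2\binom{3s}{2}$ and $|\hht^{(1)}|\le 3s$. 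Here the local form of resilience enters: deleting the two vertices $e_i\setminus\{a_i\}$ of a chosen $e_i=\{a_i,b_i,c_i\}$ still leaves a matching of size $s$, which must then use at least two vertices of $W$, and playing these alternative matchings off against $M$ should cap how many edges of $\hht^{(1)}\cup\hht^{(2)}$ can lie over a given configuration in $U$. A tempting shortcut is to \emph{compress} $W$ onto two fixed vertices $x,y$ and thereby embed $\hht$ into $\binom{[3s+2]}{3}$; the obstacle is injectivity, which fails precisely when $\hht$ carries parallel reservoir edges such as $\{w_1,a,b\},\{w_2,a,b\}$, so one must first show that such parallel families are themselves sharply limited by the pair-deletions.

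The chief obstacle — and the reason the conjecture is out of reach here for $s\ge 3$ — is that $2$-resilience is \emph{not} inherited by the $3$-graphs obtained by deleting more than two vertices. This is why the cover bound cannot be iterated to give $\tau(\hht)\ge 3s$, and why one cannot simply delete an edge $e_i$ of $M$ (three vertices) and invoke the $s-1$ case: the residual family $\hht(\overline{e_i})$ has matching number $s-1$ but need not be $2$-resilient. The inductive step must therefore be replaced by a global argument — either the Tur\'an-type reformulation above, or a weighting comparison of $\hht$ with $\binom{[3s+2]}{3}$ exploiting all of the pair-deletions at once rather than one at a time. In the absence of such an argument, this paper settles $s=2$ and records in Theorem~\ref{thm-main3} the best bound currently known in general.
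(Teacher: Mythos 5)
The statement you were asked to prove is presented in the paper as a \emph{conjecture}: the paper itself establishes \eqref{ineq-1.1} only for $s=1$ (quoting Erd\H{o}s--Lov\'{a}sz) and for $s=2$ (Theorem \ref{thm-main}, which occupies Sections 3--6), and for $s\ge 3$ it proves only the much weaker bound of Theorem \ref{thm-main3}. Your text is accordingly a strategy sketch rather than a proof, and you say so yourself; the verdict is therefore that there is a genuine gap --- the same gap that leaves the problem open. The preparatory reductions you give are sound: $|V(\hht)|\ge 3s+2$ with the trivial conclusion in case of equality, $\tau(\hht)\ge s+2$ from deleting two vertices of a minimum cover, and the reduction via the $k=3$ matching theorem of \cite{F12} to excluding families with more than $\binom{3s+2}{3}$ but at most $\binom{n}{3}-\binom{n-s}{3}$ edges. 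But these are the easy part.

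The argument fails at two concrete points. First, the stability step: no result in \cite{BDE}, \cite{F13} or \cite{FK} says that a $3$-graph with $\nu=s$ and nearly $\binom{n}{3}-\binom{n-s}{3}$ edges admits a cover of size $s+1$; one would need a Hilton--Milner-type second-order bound with an explicit gap, valid all the way down to $n$ of order $3s$, and that is itself an open strengthening of \eqref{1-1} for $k=3$. Second, and more fundamentally, the targets $|\hht^{(1)}|\le 3s$ and $|\hht^{(2)}|\le 2\binom{3s}{2}$ are far beyond what the available local tools deliver: the pseudo-sunflower bounds (Facts \ref{fact-2}--\ref{fact-4} together with Theorem \ref{psudo-sunflower}) give only $|\hht(x,R)|\le(2s-1)^2$ per vertex of $R$ and $|\hht(P,R)|\le 2s$ per pair, so that both $|\hht^{(1)}|$ and $|\hht^{(2)}|$ are a priori of order $s^3$ --- which is exactly how the paper arrives at $\frac{73}{6}s^3$ in Theorem \ref{thm-main3} and why it gets no further. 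The compression of $W$ onto two vertices founders on precisely the parallel edges you mention, and nothing in the pair-deletion structure excludes them; and, as you correctly observe, $2$-resilience is not inherited when an edge of $M$ is deleted, so no induction on $s$ is available. For the record, the paper's actual proof for $s=2$ proceeds quite differently from anything you outline: a bespoke fractional weighting $\omega(P,T)$ over the pairs in $(u,v)\times(T_1\cup T_2)$, combined with a case distinction according to whether $\hht^2$ is $3$-intersecting, none of which generalizes in an obvious way.
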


In the case $s=1$ \eqref{ineq-1.1} was proved by Erd\H{o}s and Lov\'{a}sz \cite{EL}. We establish \eqref{ineq-1.1} for $s=2$.

\begin{defn}
Define
\[
m(k,s) =\max\left\{|\hf|\colon \hf \mbox{ is a $(k-1)$-resilient $k$-graph with } \nu(\hf)=s\right\}.
\]
\end{defn}

Our main result is the following.
\begin{thm}\label{thm-main}
\begin{align}\label{ineq-main}
m(3,2) = \binom{8}{3} = 56.
\end{align}
\end{thm}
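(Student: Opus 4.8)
The plan is to prove the two inequalities in \eqref{ineq-main} separately. For the lower bound I would just check that $\binom{[8]}{3}$ is a witness: it has $\binom{8}{3}=56$ edges, its matching number is plainly $2$, and deleting at most two of its eight vertices leaves a copy of $\binom{[m]}{3}$ with $m\ge 6$, which still has matching number $2$; hence $\binom{[8]}{3}$ is $2$-resilient and $m(3,2)\ge 56$. (This is the $k=3$, $s=2$ case of the construction recalled in the abstract.)

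For the upper bound, let $\hht$ be an arbitrary $2$-resilient $3$-graph with $\nu(\hht)=2$; after discarding isolated vertices I may assume $\hht\subseteq\binom{[n]}{3}$ with no isolated vertex, and I argue by contradiction assuming $|\hht|\ge 57$. First I would dispose of small $n$. A $3$-graph with matching number $2$ needs at least $6$ vertices, so $2$-resilience forces $n-2\ge 6$, i.e. $n\ge 8$; if $n=8$ then $|\hht|\le\binom{8}{3}=56$, and for every $n\ge 9$ the $k=3$ Erd\H{o}s Matching theorem of \cite{F12} quoted above gives $|\hht|\le\max\{\binom{n}{3}-\binom{n-2}{3},\binom{8}{3}\}$, which equals $56$ for $n=9$. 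So from now on I may assume $n\ge 10$ (and may still use $|\hht|\le\binom{n}{3}-\binom{n-2}{3}$ as an a~priori bound).

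Next I would extract the structural consequences of $2$-resilience. The key one is that $\hht$ has no vertex cover of size $\le 3$: a $2$-element cover $C$ would give $\nu(\hht(\bar C))=0\neq 2$, while for a $3$-element cover $C=\{c_1,c_2,c_3\}$ every edge disjoint from $\{c_1,c_2\}$ must contain $c_3$, so $\nu(\hht(\overline{\{c_1,c_2\}}))\le 1<2$ — both contradicting $2$-resilience. By the same "delete two points" trick, $\hht$ cannot be contained in the family of triples meeting a fixed set $C$ with $|C|\le 5$ in at least two points (deleting two points of $C$ again forces $\nu\le 1$), which rules out the near-extremal $\nu=2$ families other than $\binom{[8]}{3}$ together with the $2$-cover family $\mathcal{E}(\,\cdot\,,3,2)$ itself (the latter excluded already as a $2$-cover). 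More generally, for every pair $\{u,v\}$ there is a maximum matching of $\hht$ missing $u$ and $v$, and its $6$-element union is a vertex cover of $\hht$ disjoint from $\{u,v\}$.

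The heart of the argument is then a structural analysis around a fixed maximum matching $A\cup B$, with $A=\{a_1,a_2,a_3\}$, $B=\{b_1,b_2,b_3\}$, $V_0=A\cup B$ and $W=[n]\setminus V_0$. Every edge meets $V_0$, and since $|\hht|\ge 57>\binom{6}{3}=20$, at least $37$ edges meet $W$. I would classify the edges by their trace on the ordered pair $(A,B)$, and for edges meeting $W$ also by which vertices of $W$ they use, and bound each class using two inputs: (i) $\nu(\hht)=2$ — for instance an edge $\{a_i,w,w'\}$ with $w,w'\in W$ together with $B$ is a maximum matching, so every edge whose $V_0$-trace avoids $\{a_i\}\cup B$ must meet the pair $\{w,w'\}$, and there are analogous sharp restrictions for the other trace types; and (ii) the cover-avoiding maximum matchings from the previous paragraph, applied to the $\binom{6}{2}$ pairs inside $V_0$ and to pairs meeting $W$, which pin down the links of the surviving vertices. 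Combining these, I expect the bookkeeping to force the number of edges meeting $W$ down to at most $36$, so that $|\hht|\le 20+36=56$ — equivalently, to force $\hht$ to be, after deleting at most two vertices, a subfamily of one of the canonical $\nu=2$ families already excluded above. Either way this contradicts $|\hht|\ge 57$ and finishes the proof.

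The hard part will be exactly this last step: the number of trace types, and hence the case analysis, grows with $|W|$, so the genuine content is to show that a large $W$ (equivalently $n\ge 10$) cannot coexist with $\ge 57$ edges — i.e. that such a family would necessarily hide a $\le 3$-element cover or a "heavy" set of size $\le 5$, contradicting the consequences of resilience recorded above. Once $|W|$, and hence $n$, is shown to be bounded by a small constant, what remains is a finite check on at most eight (or nine) vertices, which is routine.
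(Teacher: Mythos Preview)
Your lower bound is fine, and invoking the $k=3$ Erd\H{o}s Matching theorem to dispose of $n\le 9$ is a legitimate shortcut. The gap is exactly where you locate it: the claim that at most $36$ edges meet $W=[n]\setminus V_0$ (or, alternatively, that $n$ must be small) is the entire content of the theorem, and you give no mechanism for it beyond ``I expect the bookkeeping to force\ldots''. Trace-type classification together with the existence of matchings avoiding prescribed pairs are the right raw materials, but they do not by themselves yield the bound, and there is no reduction to a finite check on eight or nine vertices.

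The paper's proof shows how much additional structure is needed. The governing object is $\hr=\hht^2=\{T_1\cup T_2:T_1,T_2\in\hht\text{ disjoint}\}$, and the argument splits on whether $\hr$ is $3$-intersecting. If it is (Section~4), sharpened degree bounds (the ``moreover'' clauses of Facts~\ref{fact-3} and~\ref{fact-4}, and Fact~\ref{fact-4.3}) allow a direct count over a fixed $R\in\hr$; this is close in spirit to your outline, but the sharpenings depend essentially on the $3$-intersecting hypothesis. If $\hr$ is not $3$-intersecting, one locates specific edges $(1,2,5),(3,4,5),(6,7,8),(6,9,10)\in\hht$ and partitions by whether an edge meets the \emph{pair} $\{5,6\}$ --- not by $V_0$ versus $W$. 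The crux is then Lemma~\ref{lem-main}, $|\hht_{uv}|\le 36$ for every pair $u,v$, whose proof (Section~5) is an extended weighted double count with multiple subcases. Even this gives only $24+36=60$; two refinements (Lemmas~\ref{lem-3.3} and~\ref{lem-3.4}), triggered by the structural information of Lemma~\ref{lem-3.1} when $|\hht(\overline{\{5,6\}})|\ge 21$, are needed to close the gap to $56$. None of this proceeds by bounding $n$, and none of it is routine.
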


Being a special case of a conjecture, it might not sound too impressive. However, {\it exact} results on this type of questions are far and few between.

For $u,v\in [n]$, define
\[
\hht_{uv} =\{T\in \hht\colon T\cap \{u,v\}\neq \emptyset\}.
\]

 One of the main lemmas in the proof of \eqref{ineq-main} is the following.

\begin{lem}\label{lem-main}
Let $\hht\subset \binom{[n]}{3}$ be a 2-resilient family with matching number 2. Then for any $u,v\in [n]$,
\[
|\hht_{uv}|\leq 36.
\]
\end{lem}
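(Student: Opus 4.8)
The goal is to bound $|\hht_{uv}|$, the number of edges meeting the fixed pair $\{u,v\}$, by $36$. Since $\hht$ is a $3$-graph, each edge of $\hht_{uv}$ either (i) contains both $u$ and $v$, or (ii) contains exactly one of them. Edges of type (i) correspond to the link $\hht(\{u,v\})$, a family of singletons in $[n]\setminus\{u,v\}$; edges of type (ii) split into the "pure $u$" part $\hht(\{u\},\{u,v\})$ and the "pure $v$" part $\hht(\{v\},\{u,v\})$, each a $2$-graph on $[n]\setminus\{u,v\}$. So writing $a=|\hht(\{u,v\})|$, $\hg_u=\hht(\{u\},\{u,v\})$, $\hg_v=\hht(\{v\},\{u,v\})$, we have $|\hht_{uv}| = a + |\hg_u| + |\hg_v|$, and it suffices to prove $a + |\hg_u| + |\hg_v| \le 36$.

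**Exploiting resilience.**

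The key structural input is that $\nu(\hht)=2$ and, since $\hht$ is $2$-resilient, $\nu(\hht(\bar{T}))=2$ for every $T$ of size $\le 2$. In particular, taking $T=\{u,v\}$ we get $\nu(\hht(\overline{\{u,v\}}))=2$, i.e. the hypergraph obtained by deleting both $u$ and $v$ still has a matching of size $2$; this already tells us $\hg_u, \hg_v$ and $a$ live on a vertex set that is "rich" enough to support a disjoint pair of edges avoiding $u,v$. More usefully, apply resilience with a single deleted vertex: deleting $v$ alone leaves a family with matching number $2$ whose edges through $u$ are exactly $\hg_u$ together with the former $\{u,v\}$-edges $\{u\}\cup\{x\}$ reinterpreted—so in the link of $u$ within $\hht(\bar v)$ we see a $2$-graph $\hg_u$ plus $a$ pendant-type incidences. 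Since $\nu(\hht)=2$, the link of any single vertex $w$, which is a $2$-graph $\hht(\{w\})$ on $[n]\setminus\{w\}$, must itself satisfy $\nu(\hht(\{w\})) \le 1$ (a matching of size $2$ in the link of $w$ plus any third edge disjoint from it—guaranteed by resilience, deleting $w$—would give $\nu(\hht)\ge 3$). A $2$-graph with matching number $1$ is a star or a triangle, hence has at most $3$ edges or is contained in the edges through one vertex. The plan is to run this kind of argument for $w=u$ and $w=v$ simultaneously and combine the constraints on $\hg_u,\hg_v,a$.

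**The combinatorial core.**

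Concretely: $\nu(\hht(\{u\}))\le 1$ forces $\hg_u \cup \{\{v,x\}: \{u,v,x\}\in\hht\}$—wait, more carefully, $\hht(\{u\})$ as a $2$-graph on $[n]\setminus\{u\}$ has matching number $\le 1$, and it contains $\hg_u$ (edges not using $v$) and the single vertex $v$ joined to each $x$ with $\{u,v,x\}\in\hht$ (that is $a$ edges at $v$). A matching-number-$1$ graph is either a triangle or a star; in the triangle case $|\hht(\{u\})|=3$ so $a+|\hg_u|\le 3$, and symmetrically $a+|\hg_v|\le 3$, giving $|\hht_{uv}|\le a + (3-a)+(3-a) = 6-a \le 6$, far below $36$. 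So the extremal situation is when $\hht(\{u\})$ and $\hht(\{v\})$ are both \emph{stars}. If $\hht(\{u\})$ is a star centered at $v$ then $\hg_u=\emptyset$; if centered at some $w\ne v$, then every edge of $\hht$ through $u$ (other than possibly $\{u,v,w\}$) contains $w$, so $\hg_u$ is a star at $w$ with $\le n-3$ edges and $a\le n-3$ as well. The genuine work is to show these star centers, together with the fact that \emph{every} vertex $w$ has $\nu(\hht(\{w\}))\le 1$ (so the whole of $\hht$ is built from stars/triangles in a tightly constrained way) and that $\nu(\hht)=2$ with resilience, forces the ground set effectively to have at most $8$ vertices, whence $a\le 5$, $|\hg_u|,|\hg_v|\le\binom{5}{2}$-ish... but $5+10+10=25<36$, so actually a cruder bound like "at most $9$ or $10$ relevant vertices" suffices. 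I expect the main obstacle to be exactly this: ruling out a configuration spread over many vertices—one must show that if $\hht_{uv}$ were large, the star/triangle structure of the various single-vertex links would either create a matching of size $3$ (contradiction) or violate resilience (some deletion of one or two vertices drops $\nu$ below $2$). I would handle it by a case analysis on the centers of the stars $\hht(\{w\})$ for $w\in\{u,v\}$ and a few other well-chosen vertices, showing in each case that the support of $\hht_{uv}$ lies within a bounded vertex set and then bounding $a+|\hg_u|+|\hg_v|$ directly by a small explicit count that never exceeds $36$ (and, in the balanced "two disjoint triangles plus connecting edges on $8$ vertices" case, meets it).
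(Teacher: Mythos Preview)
Your proposal rests on a claim that is simply false: you assert that for any vertex $w$ the link graph $\hht(\{w\})$ has matching number at most $1$. The extremal example $\hht=\binom{[8]}{3}$ itself refutes this: the link of any vertex is $\binom{X}{2}$ for a $7$-set $X$, which has matching number $3$. Your justification contains the error: if $\{a,b\}$ and $\{c,d\}$ are disjoint pairs in the link of $w$, the corresponding edges $\{w,a,b\}$ and $\{w,c,d\}$ of $\hht$ both contain $w$, so they do \emph{not} form a matching of size $2$ in $\hht$, and adding a third edge disjoint from $\{w,a,b,c,d\}$ produces only a matching of size $2$, not $3$. Everything downstream of this (the ``star or triangle'' dichotomy, the case analysis on star centres, the conclusion that the support is bounded) collapses.

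For comparison, the paper's proof does not attempt to constrain the full links $\hht(\{u\})$, $\hht(\{v\})$. Instead it uses $2$-resilience to fix a matching $T_1,T_2\in\hht$ avoiding $\{u,v\}$, sets $S=T_1\cup T_2\cup\{u,v\}$, and introduces a weight function on the twelve pairs $P\in\{u,v\}\times(T_1\cup T_2)$ so that $|\hht_{uv}|=\sum_{P}\omega(P)$. The heart of the argument is a careful (and fairly long) case analysis, using the pseudo-sunflower bounds (Facts~\ref{fact-2}--\ref{fact-4}) together with cross-intersection constraints coming from $\nu(\hht)=2$, to show $\sum_{P\in\{u,v\}\times T_i}\omega(P)\le 18$ for $i=1,2$. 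The bound on the link that actually holds and is used is not $\nu(\hht(\{w\}))\le 1$ but rather that $\hht(x,R)$ contains no pseudo sunflower of size $4$ (Fact~\ref{fact-3}), which for a $2$-graph means no vertex of degree $\ge 4$; this is much weaker than your claimed star/triangle structure and is why the genuine proof requires substantially more work.
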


Note that the family  $\binom{[8]}{3}$ shows that the upper bound $36$ is best possible.

The best known general upper bound on $m(k,s)$ is due to Lov\'{a}sz \cite{lovasz}.

\begin{thm}[\cite{lovasz}]\label{thm-lovasz}
\begin{align}\label{ineq-m3s}
m(k,s)\leq (ks)^s.
\end{align}
\end{thm}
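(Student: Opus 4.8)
The plan is to reduce the estimate to a single codegree bound and then bootstrap it by induction on $s$. Throughout, $\hf$ denotes a $(k-1)$-resilient $k$-graph with $\nu(\hf)=s$, and for a $(k-1)$-set $S\subseteq[n]$ I write $N(S)$ for the set of vertices $v$ with $S\cup\{v\}\in\hf$, so that $|N(S)|$ is the codegree of $S$.

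The first step is the codegree lemma: $|N(S)|\le ks$ for every $(k-1)$-set $S$. Since $|S|\le k-1$, resilience gives $\nu(\hf(\bar{S}))=s$, so one can choose pairwise disjoint edges $G_1,\dots,G_s\in\hf$ each disjoint from $S$; set $X=G_1\cup\cdots\cup G_s$, a set of exactly $ks$ vertices. If some $v\in N(S)$ lay outside $X$, then $S\cup\{v\}$ would be an edge disjoint from every $G_i$ (the $G_i$ avoid $S$ and omit $v$), so $S\cup\{v\},G_1,\dots,G_s$ would be $s+1$ pairwise disjoint edges, contradicting $\nu(\hf)=s$. Hence $N(S)\subseteq X$ and $|N(S)|\le ks$.

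The second step is an induction on $s$ with the small cases treated by hand. For the inductive step, fix a maximum matching $M_1,\dots,M_s$, put $W=M_1\cup\cdots\cup M_s$ (so $|W|=ks$ and, by maximality, every edge meets $W$), enumerate $W=\{w_1,\dots,w_{ks}\}$, and assign to each edge $F$ the index $i(F)=\min\{i\colon w_i\in F\}$. This partitions $\hf$ into $\hf_1,\dots,\hf_{ks}$ with $|\hf|=\sum_i|\hf_i|$, where $\hf_i$ consists of the edges through $w_i$ that avoid $w_1,\dots,w_{i-1}$. Put $\hg_i=\{F\setminus\{w_i\}\colon F\in\hf_i\}$, a $(k-1)$-graph; by the codegree lemma, every $(k-2)$-subset of a member of $\hg_i$ has codegree at most $ks$ in $\hg_i$. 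The aim is to upgrade this to $|\hg_i|\le(ks)^{s-1}$ --- essentially the inductive bound for a $(k-1)$-graph of matching number $s-1$, but with the scale held at $ks$ rather than at $k(s-1)$ --- and then to conclude $|\hf|=\sum_{i=1}^{ks}|\hg_i|\le ks\cdot(ks)^{s-1}=(ks)^s$.

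The main obstacle is exactly this upgrade. Links and vertex-deletions of a resilient hypergraph need not themselves be resilient, so one cannot simply quote the inductive hypothesis for $\hg_i$; the way around this is to spend the resilience of $\hf$ a second time --- deleting $w_i$ together with a suitable part of the matching $M_1,\dots,M_s$ and running an augmenting/swapping argument on that matching --- so as to recover inside $\hg_i$ the configuration that drives the induction (a controlled matching number together with the codegree control above), while keeping the ``budget'' $ks$ from shrinking even as the matching number drops. Arranging this peeling so that the base $ks$ is preserved through all $s$ stages is the genuinely delicate point, and it is what fixes the exponent at $s$ and the base at $ks$.
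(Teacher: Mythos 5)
The paper does not prove this theorem; it cites Lov\'{a}sz. Before judging your argument, note that the statement as printed is a typo: the surrounding text (``for $k=3$ it gives $m(3,s)\leq 27s^3$'', and ``the special case $s=1$ states $m(k,1)\leq k^k$'') shows the intended bound is $m(k,s)\leq (ks)^k$, with exponent $k$, not $s$. The bound $(ks)^s$ that you set out to prove is in fact \emph{false}: for $k=3$, $s=1$ it would assert that every $2$-resilient intersecting $3$-graph has at most $3$ edges, whereas $\binom{[5]}{3}$ is such a family with $10$ edges (and the paper itself records $m(k,1)\geq \lfloor (e-1)k!\rfloor$). So no correct proof of the literal statement exists, and any induction on $s$ must already break at $s=1$.

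Your first step, the codegree lemma $|N(S)|\leq ks$ for every $(k-1)$-set $S$, is correct and is essentially the paper's Fact \ref{fact-2}/Corollary \ref{cor-1} argument: the neighbourhood of $S$ is trapped inside the $ks$-element vertex set of a maximum matching avoiding $S$. But the second step is where the proof both stops and could not be completed. You partition $\hf$ into $ks$ links $\hg_i$ and want $|\hg_i|\leq (ks)^{s-1}$ by induction on $s$; you yourself flag that links of resilient hypergraphs are not resilient and that the ``upgrade'' keeping the budget at $ks$ while the matching number drops is ``the genuinely delicate point'' --- that point is never carried out, and it cannot be, since the base case of the recursion would force $|\hf|\leq ks$ when $s=1$. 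The correct recursion runs on the uniformity $k$, not on $s$: each $\hg_i$ is a $(k-1)$-graph and the right target is $|\hg_i|\leq (ks)^{k-1}$. Concretely, the classical argument is a depth-$k$ branching: given a partial edge $\{x_1,\dots,x_j\}$ with $j\leq k-1$, resilience supplies a matching of size $s$ avoiding $x_1,\dots,x_j$, and its $ks$-element vertex set must meet every edge of $\hf$, in particular every edge containing $\{x_1,\dots,x_j\}$ in a new vertex $x_{j+1}$; after $k$ rounds the edge is determined, giving at most $(ks)^k$ edges. Your codegree lemma is the $j=k-1$ instance of this; what is missing is iterating it at every level $j=0,1,\dots,k-1$ rather than trying to peel off the matching.
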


For $k=3$, it gives $m(3,s) \leq 27s^3$. The next result provides an improvement.
\begin{thm}\label{thm-main3}
For $s\geq 3$,
\[
m(3,s) \leq \frac{73}{6}s^3+50.
\]
Moreover, for $s\geq 21$,
\[
m(3,s) \leq \frac{73}{6}s^3.
\]
\end{thm}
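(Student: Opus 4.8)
The plan is to bound $m(3,s)$ for a general $(k-1)=2$-resilient $3$-graph $\hht$ with $\nu(\hht)=s$ by combining a local estimate in the spirit of Lemma \ref{lem-main} with an iterative "peeling" argument on the structure of large matchings. First I would fix a maximum matching $M=\{M_1,\dots,M_s\}$ and set $V_0=[n]\setminus\bigcup_i M_i$. By maximality every edge of $\hht$ meets $\bigcup_i M_i$, so $\hht=\bigcup_{T\subset\bigcup_i M_i,\,T\neq\emptyset}\hht(T,\bigcup_i M_i)$, and it suffices to control, for each nonempty $T$, how many edges use exactly $T$ among the $3s$ matched vertices. The key point is that $2$-resilience forces very strong constraints here: for instance, for a single matched vertex $x$, the link $\hht(\{x\})$ can have matching number at most $s-1$ (else $\nu(\hht)>s$), and removing any one vertex from the picture cannot drop $\nu$, which should let me bootstrap the crude Lov\'asz-type bound of $27s^3$ down to roughly $\tfrac{73}{6}s^3$.

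The heart of the argument is a refined version of Lemma \ref{lem-main}: I would prove that for \emph{any} two vertices $u,v$, the subfamily $\hht_{uv}$ of edges meeting $\{u,v\}$ has size at most roughly $ks\cdot\binom{\text{something}}{2}$; more precisely I expect a clean bound like $|\hht_{uv}|\le\binom{3(s-1)+2+c}{2}$ coming from the fact that $\hht(\{u\})$ and $\hht(\{v\})$ are essentially $2$-resilient $2$-graphs (links of a vertex in a resilient $3$-graph inherit resilience up to the loss of one unit in $\nu$, and for $2$-graphs the extremal bound $\binom{2s+1}{2}$ from the introduction applies). Summing $|\hht_{uv}|$ over a carefully chosen collection of pairs — say the $s$ pairs $\{u_i,v_i\}$ obtained by taking two of the three vertices from each $M_i$ — and using that every edge either lies entirely outside $V_0$ (there are few such, since $\nu(\hht[\,\cup M_i\,])$ is itself bounded) or meets one of the chosen pairs, yields $|\hht|\le s\cdot\max_{u,v}|\hht_{uv}|+(\text{lower-order})$. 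Optimizing the constant in the pair-bound is where the $\tfrac{73}{6}$ will come from; the additive $+50$ absorbs boundary cases for small $s$, and for $s\ge 21$ these lower-order terms are dominated and can be dropped, giving the clean $\tfrac{73}{6}s^3$.

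I would then handle the small-$s$ range $3\le s\le 20$ separately. Here the cubic $\tfrac{73}{6}s^3$ is already comfortably larger than any plausible extremal family (the conjectured truth is $\binom{3s+2}{3}\approx\tfrac{9}{2}s^3$), so a cruder argument — essentially the Lov\'asz bound $27s^3$ combined with one step of the link reduction, or a direct counting over the at most $3s$ matched vertices — should suffice once the constant $50$ is thrown in to cover the worst arithmetic. The main obstacle I anticipate is the refined pair-bound: controlling $|\hht_{uv}|$ requires understanding the interaction of the two links $\hht(\{u\})$, $\hht(\{v\})$ and the "cross" edges containing both $u$ and $v$, and resilience must be used delicately — deleting a single vertex can move $\hht$ out of the nice structure, so one has to argue that the extremal configurations for the local bound are exactly the "complete" ones (as the remark after Lemma \ref{lem-main} suggests, $\binom{[8]}{3}$ is tight for $s=2$), and then rule out simultaneous tightness at too many pairs. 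Making the overlap-counting between different $\hht_{u_iv_i}$ precise enough to beat $27s^3$ without losing the constant is the delicate calculation; everything else is bookkeeping.
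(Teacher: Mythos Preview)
Your proposal has a fundamental error at its core. The claim that ``for a single matched vertex $x$, the link $\hht(\{x\})$ can have matching number at most $s-1$ (else $\nu(\hht)>s$)'' is false: a matching $P_1,\dots,P_k$ in the $2$-graph $\hht(\{x\})$ yields edges $P_i\cup\{x\}\in\hht$ that all \emph{share} $x$, so no contradiction with $\nu(\hht)=s$ arises. Concretely, in the extremal example $\hht=\binom{[3s+2]}{3}$ the link of any vertex is all of $\binom{[3s+1]}{2}$, with matching number $\lfloor(3s+1)/2\rfloor\gg s$. Thus links do not become $1$-resilient $2$-graphs with bounded matching number, the $\binom{2s+1}{2}$ bound is unavailable, and the whole pair-summing scheme built on it collapses. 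There is also no mechanism in your outline that would single out the very specific constant $\tfrac{73}{6}$; that number is not an artifact of an $\hht_{uv}$-bound.

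The paper's proof is entirely different and does not use Lemma~\ref{lem-main} at all. One fixes $R=T_1\cup\cdots\cup T_s$ and partitions $\hht$ into $\hht_1,\hht_{21},\hht_{22},\hht_3$ according to how an edge meets $R$ (one vertex; two in the same $T_i$; two in different $T_i$'s; three in different $T_i$'s). The key tool is the pseudo-sunflower bound (Theorem~\ref{psudo-sunflower}): Facts~\ref{fact-2}--\ref{fact-4} give $|\hht(x,R)|\le(2s-1)^2$ and $|\hht(P,R)|\le 2s$. One then introduces a parameter $t=|R_1|$ where $R_1=\{x\in R:|\hht(x,R)|>6s\}$; Fact~\ref{fact-general1} shows $|R_1\cap T_i|\le 1$, and for $T_i$'s containing a heavy vertex, Facts~\ref{fact-general2}--\ref{fact-general4} kill extra edges in $\hht_{22}$ and $\hht_3$. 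Summing the four pieces gives an explicit polynomial $f(t)$; one checks $f'(t)>0$ for $s\ge 21$, so the maximum is $f(s)=\tfrac{73}{6}s^3-\tfrac{19}{2}s^2+\tfrac{22}{3}s<\tfrac{73}{6}s^3$, and the additive $+50$ covers $3\le s\le 20$ by direct evaluation of $f(t)$.
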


Actually, the case $s=1$ has received the most attention. Recall that a $k$-graph $\hf$ with $\nu(\hf)=1$ is called {\it intersecting}. The case $s=1$ of the Erd\H{o}s Matching Conjecture is the Erd\H{o}s-Ko-Rado Theorem, one of the central results in extremal set theory.

For a $k$-graph $\hf$  its {\it covering number} (or {\it transversal number}) $\tau(\hf)$ is defined as
\[
\min\left\{|Y|\colon Y\cap F\neq \emptyset \mbox{ for all } F\in \hf\right\}.
\]
If $\hf$ is intersecting then $\tau(\hf)\leq k$. Then $\hf$ is $(k-1)$-resilient if and only if $\tau(\hf)=k$.

Moreover, if $\tau(\hf)=k$ then one can keep adding new edges ($k$-sets) that intersect all the edges already in $\hf$. Eventually
this process will stop. The end result is a {\it maximal} intersecting $k$-graph $\tilde{\hf}$, i.e., an intersecting $k$-graph with $\tau(\tilde{\hf})=k$
and such that to every $k$-set $H\notin \tilde{\hf}$ there is some $F\in \tilde{\hf}$ with $H\cap F=\emptyset$. The special case $s=1$ of Theorem \ref{thm-lovasz} states
\[
|\tilde{\hf}| \leq m(k,1) \leq k^k,
\]
it is due to Erd\H{o}s and Lov\'{a}sz \cite{EL}. In that paper Erd\H{o}s and Lov\'{a}sz  showed
\begin{align}\label{ineq-1.2}
m(k,1)\geq \lfloor (e-1)k!\rfloor.
\end{align}
Note that already for $k=4$,
\[
\binom{4+3}{4}=35<41=\lfloor (e-1)4!\rfloor.
\]

Lov\'{a}sz \cite{lovasz} conjectured that equality holds in \eqref{ineq-1.2}. This was disproved in \cite{FOT}, where a construction showing
\[
m(k,1)\geq \left(\frac{k}{2}+o(1)\right)^k \mbox{ was exhibited}.
\]
The upper bound $k^k$ was somewhat improved in  \cite{Tuza}, \cite{Cherkashin}, \cite{AT}, \cite{F19}, \cite{Zakharov}. However, the ratio of the best upper and lower bounds is still $2^{k(1+o(1))}$. In particular, for $k=4$ we only know
\[
42\leq m(4,1)\leq 175.
\]

\section{Some simple facts and the proof of Theorem \ref{thm-main3}}

Let $\hht\subset \binom{[n]}{3}$ be a 2-resilient family with matching number $s$, $s\geq 2$. Define
\[
\hr =\hht^s:= \left\{T_1\cup T_2\cup \cdots \cup T_s\colon T_1,T_2,\ldots, T_s\in \hht\mbox{ form a matching}\right\}.
\]

\begin{fact}\label{fact-1}
Suppose that $T_1, T_2,\ldots, T_s$ and $V_1,V_2,\ldots, V_s$ are maximal matchings in $\hht$. Then there exists a permutation $\sigma$ of $\{1,2,\ldots,s\}$ such that $T_i\cap V_{\sigma(i)}\neq \emptyset$, $1\leq i\leq s$.
\end{fact}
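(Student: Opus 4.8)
The plan is to argue by contradiction using the maximality of the two matchings. Suppose no such permutation $\sigma$ exists. By the marriage (Hall's) theorem applied to the bipartite "intersection graph" on the two copies of $\{1,\dots,s\}$—with an edge between $i$ and $j$ whenever $T_i\cap V_j\neq\emptyset$—the absence of a perfect matching means there is a subset $I\subset\{1,\dots,s\}$ whose neighbourhood $N(I)=\{j: T_i\cap V_j\neq\emptyset\text{ for some }i\in I\}$ satisfies $|N(I)|<|I|$. Fix such an $I$ and write $J=\{1,\dots,s\}\setminus N(I)$, so $|J|=s-|N(I)|>s-|I|$, i.e. $|I|+|J|>s$.

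The key step is then to extract a matching in $\hht$ that is too large. By definition of $J$, every edge $T_i$ with $i\in I$ is disjoint from every $V_j$ with $j\in J$; also the $T_i$ ($i\in I$) are pairwise disjoint among themselves, and the $V_j$ ($j\in J$) are pairwise disjoint among themselves. Hence $\{T_i: i\in I\}\cup\{V_j: j\in J\}$ is a matching in $\hht$ of size $|I|+|J|>s=\nu(\hht)$, a contradiction. This already gives the result; no appeal to $2$-resilience is actually needed here, only $\nu(\hht)=s$ together with the fact that both families are matchings of size $s$ (the word "maximal" in the statement is just shorthand for "of size $\nu(\hht)=s$"). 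I should double-check that the bipartite graph is on two disjoint vertex sets even when some $T_i=V_j$ as sets—this causes no trouble, since equal edges simply contribute an edge $ij$ to the intersection graph.

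The only genuine obstacle is making sure Hall's condition is invoked in the correct direction: a system of distinct representatives for $\{N(i)\}_i$ (equivalently a perfect matching in the intersection bipartite graph) is exactly a permutation $\sigma$ with $T_i\cap V_{\sigma(i)}\neq\emptyset$, so its failure must produce a Hall violator, and one must be careful that the violator yields $|I|+|J|>s$ rather than $\geq s$. Since $|N(I)|\leq |I|-1$, we get $|J|=s-|N(I)|\geq s-|I|+1$, hence $|I|+|J|\geq s+1$, which is the strict excess we need. I expect the whole argument to be three or four lines once Hall's theorem is cited.
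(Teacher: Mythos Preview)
Your proposal is correct and is essentially identical to the paper's own proof: both set up the bipartite intersection graph, invoke Hall's theorem, and for a violating set $I$ exhibit the matching $\{T_i:i\in I\}\cup\{V_j:j\notin N(I)\}$ of size exceeding $s$. The paper phrases it as directly verifying the Hall condition rather than arguing by contradiction, but the construction and the key inequality are the same.
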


\begin{proof}
Make a bipartite graph $\hb$ with partite sets $\{T_1,T_2,\ldots,T_s\}$ and $\{V_1,V_2,\ldots,V_s\}$. Put an edge $(T_i,V_j)$ if and only if $T_i\cap V_j\neq \emptyset$. The fact is equivalent to the existence of a perfect matching in $\hb$. Thus it is sufficient to show that the K\"{o}nig-Hall condition is verified. For an arbitrary $X\subset [s]$ consider the set $Y=\{j\colon \exists i\in X,\ T_i\cap V_j\neq \emptyset\}$. Replacing the edges $V_j$ with $j\in Y$ by the edges $T_i$ with $i\in X$ from $V_1,V_2,\ldots,V_s$ produces a matching. Thus $|Y|\geq |X|$ follows from $\nu(\hht)=s$.
\end{proof}

\begin{cor}
The family $\hr$ is $s$-intersecting. Moreover if $R=T_1\cup  T_2\cup \ldots\cup T_s\in \hr$ and $Q=V_1\cup  V_2\cup \ldots\cup V_s\in \hr$ satisfy $|R\cap Q|=s$ then there is a unique permutation $\sigma$ of $[s]$ such that
\[
|T_i\cap V_j| = \left\{
                \begin{array}{ll}
                  1, & \mbox{ if } j=\sigma(i)\\[2pt]
                  0, & otherwise.
                \end{array}
              \right.
\]
\end{cor}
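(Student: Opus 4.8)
The plan is to deduce the Corollary directly from Fact~\ref{fact-1}. Let $R=T_1\cup\cdots\cup T_s$ and $Q=V_1\cup\cdots\cup V_s$ be arbitrary members of $\hr$, so that $(T_i)_{i\in[s]}$ and $(V_j)_{j\in[s]}$ are maximal matchings in $\hht$ (maximal because $\nu(\hht)=s$). By Fact~\ref{fact-1} there is a permutation $\sigma$ with $T_i\cap V_{\sigma(i)}\neq\emptyset$ for every $i$. First I would record that this already forces $|R\cap Q|\geq s$: picking one vertex $x_i\in T_i\cap V_{\sigma(i)}$ for each $i$ gives $s$ vertices that lie in $R\cap Q$, and they are distinct because the $T_i$'s (hence also the $x_i$'s) are pairwise disjoint. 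This proves the first assertion, that $\hr$ is $s$-intersecting.

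For the ``moreover'' part, assume in addition that $|R\cap Q|=s$. The $s$ vertices $x_1,\dots,x_s$ constructed above are then exactly the elements of $R\cap Q$, so every common vertex of $R$ and $Q$ is accounted for. Now I would argue that the permutation $\sigma$ is forced and that the intersection pattern is as claimed. Since each edge has size $3$ and the $T_i$ partition $R$, for any $j$ the sets $T_1\cap V_j,\dots,T_s\cap V_j$ partition $V_j$ (intersected with $R$), hence $\sum_i |T_i\cap V_j| = |V_j\cap R| \leq 3$; summing over $j$ gives $\sum_{i,j}|T_i\cap V_j| = |R\cap Q| = s$. On the other hand the $s$ entries $|T_i\cap V_{\sigma(i)}|$ are each at least $1$, so they contribute at least $s$ to this sum; therefore $|T_i\cap V_{\sigma(i)}|=1$ for all $i$ and $|T_i\cap V_j|=0$ whenever $j\neq\sigma(i)$. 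This also shows $V_j\cap R = \{x_{\sigma^{-1}(j)}\}$ is a single vertex, so no two $V_j$ can meet the same $T_i$, which pins down $\sigma$ uniquely as the map sending $i$ to the index of the unique $V_j$ meeting $T_i$.

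The only subtle point, and the step I would be most careful about, is the uniqueness of $\sigma$: a priori Fact~\ref{fact-1} might be satisfiable by several permutations. The counting argument above resolves this because once $|R\cap Q|=s$ we learn that each $T_i$ meets \emph{exactly one} $V_j$ (and in exactly one vertex), so the bipartite ``intersection graph'' between $\{T_i\}$ and $\{V_j\}$ is a perfect matching rather than merely containing one; a perfect matching in a graph that is itself a perfect matching is unique. No genuinely hard estimate is needed — everything reduces to the fact that $s$ disjoint $3$-sets meeting $s$ other disjoint $3$-sets in a total of only $s$ points leaves no room for any edge to meet two partners.
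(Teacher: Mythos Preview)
Your proof is correct and is exactly the intended derivation: the paper states the Corollary without proof, as an immediate consequence of Fact~\ref{fact-1}, and your argument --- choosing $x_i\in T_i\cap V_{\sigma(i)}$ to get $|R\cap Q|\ge s$, and then, when $|R\cap Q|=s$, using the double count $\sum_{i,j}|T_i\cap V_j|=|R\cap Q|=s$ to force $|T_i\cap V_{\sigma(i)}|=1$ and all other entries $0$ --- is the natural way to flesh this out. Your treatment of uniqueness (the bipartite intersection graph has exactly $s$ edges and hence is itself a perfect matching) is also correct.
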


The distinct sets $S_0,\ldots,S_r$ are said to form a {\it pseudo sunflower} of size $r+1$ and center $C$ if $C\subsetneq S_0$ and the sets $S_i\setminus C$ are pairwise disjoint, $0\leq i\leq r$.

If $S_0,\ldots,S_r$ is a pseudo sunflower with center $C\subsetneq F_0$, then for any $C'$ with $C\subset C'\subsetneq S_0$ the sets $S_i\setminus C'$ are pairwise disjoint. Hence $S_0,S_1,\ldots,S_r$ form a pseudo sunflower with center $C'$ as well. Based on this observation throughout this paper we always assume that the center $C$ of a pseudo sunflower satisfies $|S_0\setminus C|=1$.

\begin{thm}[F\"{u}redi \cite{Fu80}, cf. also \cite{F2022}]\label{psudo-sunflower}
Let $k,r$ be positive integers and let $\hf$ be a $k$-graph not containing any pseudo sunflower of size $r+1$. Then
\begin{align}\label{ineq-pseudosunflower}
|\hf| \leq r^k.
\end{align}
\end{thm}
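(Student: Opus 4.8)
The plan is to prove the bound $|\hf|\le r^k$ by induction on $k$, using a shifting-free counting argument organized around a fixed point of the ground set. For $k=1$ the hypothesis says $\hf$ contains no pseudo sunflower of size $r+1$; but a pseudo sunflower with $|S_0\setminus C|=1$ and $C=\emptyset$ is just a collection of $r+1$ pairwise disjoint singletons, i.e.\ any $r+1$ distinct $1$-sets, so $|\hf|\le r$ and the base case holds. For the inductive step, fix an element $x\in[n]$ and split $\hf$ into the \emph{link} $\hf_x=\{F\setminus\{x\}\colon x\in F\in\hf\}$, a $(k-1)$-graph, and the \emph{rest} $\hf_{\bar x}=\{F\in\hf\colon x\notin F\}$, a $k$-graph. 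The goal is to choose $x$ so that $|\hf_x|\le r^{k-1}$ follows from induction and $|\hf_{\bar x}|\le r^k-r^{k-1}$ follows from a greedy argument; summing gives the claim.

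For the link: I claim that if $\hf$ has no pseudo sunflower of size $r+1$, then neither does $\hf_x$. Indeed, suppose $S_0',\dots,S_r'$ is a pseudo sunflower in $\hf_x$ with center $C'$ (and $|S_0'\setminus C'|=1$); then $S_0'\cup\{x\},\dots,S_r'\cup\{x\}$ are distinct members of $\hf$, and one checks they form a pseudo sunflower with center $C'\cup\{x\}$ (the petals $S_i'\setminus C'$ are unchanged and still pairwise disjoint, and $x$ sits in the new center). So by induction $|\hf_x|\le r^{k-1}$ for \emph{every} choice of $x$. The subtle point is bounding $|\hf_{\bar x}|$, and for this the choice of $x$ matters: I would like to pick $x$ to lie in "many" edges, so that what remains is small. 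Concretely, run the following greedy procedure to build a pseudo sunflower. Start with any $S_0\in\hf$ and, thinking of its $k-1$ "potential centers" $C$ with $|S_0\setminus C|=1$, repeatedly try to extend: given edges $S_0,\dots,S_j$ forming a pseudo sunflower with some center $C$, look for $S_{j+1}\in\hf$ with $S_{j+1}\setminus C$ disjoint from all $S_i\setminus C$. Since $\hf$ contains no pseudo sunflower of size $r+1$, for every center $C$ this process terminates after at most $r$ petals.

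The main obstacle — and the heart of F\"uredi's argument — is to convert "every greedy pseudo sunflower has at most $r$ petals" into the global bound $|\hf|\le r^k$. The clean way is the following recursive counting. For a $(k-1)$-set $D$ contained in some edge, let $d(D)=|\{F\in\hf\colon D\subset F\}|$ be its degree; maximality of a pseudo sunflower with center any $(k-1)$-subset of an edge forces $d(D)\le r$ for every such $D$ only when $k=1$, so instead one argues as follows. Pick an edge $F_0\in\hf$ and one of its elements $x$; the edges of $\hf$ \emph{not} meeting a suitably chosen small "core" can be greedily assembled into a large pseudo sunflower, bounding their number. More precisely, I would prove the stronger statement by induction: letting $g(k,r)$ be the maximum size, one shows $g(k,r)\le r\cdot g(k-1,r)$ by observing that the edges of $\hf$ can be partitioned according to which petal of a fixed maximal pseudo sunflower (of size $\le r$) they first attach to, and each such class, after deleting the common part, is a $(k-1)$-graph with no pseudo sunflower of size $r+1$. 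Unwinding the recursion with $g(1,r)\le r$ yields $g(k,r)\le r^k$. The delicate bookkeeping — making the partition of $\hf$ into $\le r$ classes actually well-defined and checking each class really is pseudo-sunflower-free of the right size after contraction — is where the real work lies; everything else is routine.
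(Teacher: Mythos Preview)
The paper does not supply its own proof of this theorem: it is quoted from F\"uredi~\cite{Fu80} (cf.\ also~\cite{F2022}) and used as a black box. So there is no in-paper argument to compare against; I can only assess your sketch on its own merits.

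Your base case and the link observation are both correct: if $S_0',\dots,S_r'$ were a pseudo sunflower in $\hf_x$ with center $C'$, then $S_0'\cup\{x\},\dots,S_r'\cup\{x\}$ form one in $\hf$ with center $C'\cup\{x\}$, so indeed $|\hf_x|\le r^{k-1}$ for every $x$. That part is clean and is a genuine ingredient in the known proofs.

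The gap is in your inductive step, and it is more than bookkeeping. In your ``partition by petals'' plan you fix a maximal pseudo sunflower $S_0,\dots,S_m$ with center $C$ (so $m+1\le r$) and want to sort each $F\in\hf$ by which petal $S_i\setminus C$ it meets, then pass to a $(k-1)$-graph by ``deleting the common part''. But recall the asymmetry in the definition: only $S_0$ is required to contain $C$. Hence for $i\ge 1$ the petal $S_i\setminus C$ can have any size up to $k$, not $1$. Consequently the class $\{F\in\hf: (F\setminus C)\cap(S_i\setminus C)\neq\emptyset\}$ is \emph{not} a link at a single vertex, and there is no single ``common part'' whose deletion turns it into a $(k-1)$-graph. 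If instead you refine each class by the particular vertex of $S_i\setminus C$ that $F$ hits, you get up to $1+(r-1)k$ links, and the recursion yields only $(1+(r-1)k)r^{k-1}$, not $r^k$. Your earlier ``link plus rest'' split has the same obstruction: there is no evident choice of $x$ forcing $|\hf_{\bar x}|\le (r-1)r^{k-1}$.

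So as written the plan does not close. The actual arguments in \cite{Fu80,F2022} use a more careful branching that keeps the fan-out exactly $r$ at every level (not $\le 1+(r-1)k$); reconstructing that control is the missing idea, not routine bookkeeping.
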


\begin{fact}\label{fact-2}
$\hht$ does not contain a pseudo sunflower of size $3s+1$.
\end{fact}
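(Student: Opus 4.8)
The plan is to argue by contradiction: suppose $S_0, S_1, \ldots, S_{3s}$ is a pseudo sunflower of size $3s+1$ in $\hht$, with center $C$ satisfying $|S_0 \setminus C| = 1$. Since $\hht$ is a $3$-graph, $|C| \in \{1, 2\}$ (the case $C = \emptyset$ would give $3s+1$ pairwise disjoint edges, contradicting $\nu(\hht) = s$ already for $s \geq 1$). I will handle the two cases $|C| = 2$ and $|C| = 1$, the first being easy and the second carrying the real content.

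If $|C| = 2$, write $C = \{x, y\}$ and $S_i = C \cup \{z_i\}$ with the $z_i$ distinct. Take any edge $T^\ast$ in a maximum matching that avoids $x$; since $\hht$ is $2$-resilient, omitting the single vertex $x$ does not drop the matching number, so such a matching exists and in fact $\hht(\bar{\{x,y\}})$ still has matching number $s$ — but more directly, consider $T = \{x\}$: by $2$-resilience applied to $T = \{x\}$, there is a maximum matching $M$ in $\hht$ with $x \notin \bigcup M$. The $s$ edges of $M$ cover at most $3s$ of the vertices $z_0, \ldots, z_{3s}$, but actually they cannot cover all $3s+1$ of them, so some $z_i \notin \bigcup M$, and since also $x \notin \bigcup M$, the edge $S_i = \{x, y, z_i\}$ meets $\bigcup M$ only possibly in $y$. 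If $y \notin \bigcup M$ too we get a matching of size $s+1$; if $y \in \bigcup M$, remove the edge of $M$ containing $y$ and add two of the $S_j$'s disjoint from the rest — here we need at least two indices $j$ with $z_j \notin \bigcup M$, which holds since $M$ covers at most $3s$ vertices among $\{z_0, \ldots, z_{3s}\}$ minus the one in the removed edge, leaving room; a short count gives a matching of size $s+1$, a contradiction.

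The case $|C| = 1$ is the main obstacle. Here $C = \{x\}$ and the $S_i \setminus \{x\}$ are $3s+1$ pairwise disjoint $2$-sets, say $S_i = \{x\} \cup P_i$ with $P_i \in \binom{[n]}{2}$ pairwise disjoint. The idea is again to use $2$-resilience to find a maximum matching $M$ avoiding a well-chosen pair of vertices and then enlarge it. Apply the resilience condition with $T = \{x, w\}$ for a suitable $w$ (or just $T = \{x\}$): get a maximum matching $M$ of size $s$ with $x \notin \bigcup M$. The $3s$ vertices covered by $M$ can meet the pairs $P_0, \ldots, P_{3s}$ in a limited way; since there are $3s+1$ disjoint pairs, at least one pair $P_i$ is entirely disjoint from $\bigcup M$ (as $\bigcup M$ has only $3s$ vertices and hitting $3s+1$ disjoint pairs would need $3s+1$ vertices). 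Then $S_i = \{x\} \cup P_i$ is disjoint from $\bigcup M$, yielding a matching $M \cup \{S_i\}$ of size $s+1$, contradicting $\nu(\hht) = s$. I expect the actual write-up to need care in choosing $T$ so that the matching $M$ really avoids $x$ — one applies the definition of $2$-resilience to the set $T = \{x\}$, giving $\nu(\hht(\bar{\{x\}})) = \nu(\hht) = s$, and a maximum matching of $\hht(\bar{\{x\}})$ is precisely a maximum matching of $\hht$ avoiding $x$ — and in double-checking the pigeonhole step that $3s$ vertices cannot transversally hit $3s+1$ pairwise disjoint $2$-sets. Combining the two cases completes the proof; the bound $3s$ (rather than something smaller) is exactly what makes the pigeonhole work, since a size-$(3s+1)$ pseudo sunflower forces $3s+1$ disjoint "petals" of size $\geq 1$, more than the $3s$ vertices any $s$-matching can use up.
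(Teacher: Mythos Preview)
Your case split is confused and the $|C|=2$ argument has a genuine gap. First, since you adopt the convention $|S_0\setminus C|=1$ and $|S_0|=3$, the center always satisfies $|C|=2$; the case $|C|=1$ never arises. Second, and more importantly, your handling of $|C|=2$ breaks down exactly where you write ``remove the edge of $M$ containing $y$ and add two of the $S_j$'s disjoint from the rest.'' When $|C|=2$ every petal $S_j$ contains the entire center $C=\{x,y\}$, so any two of the $S_j$ intersect in $\{x,y\}$ and can never be added together to a matching. Removing one edge and adding one $S_j$ only recovers a matching of size $s$, not $s+1$, so no contradiction results.

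The fix is the one-line argument the paper gives, and it is exactly the idea you yourself use in your (unnecessary) $|C|=1$ case: apply $2$-resilience to the full center $T=C$ (this is where the ``$2$'' in $2$-resilient is actually used) to obtain $R\in\hr(\overline{C})$, i.e.\ a union of an $s$-matching avoiding all of $C$. Since $|R|=3s$ and the $3s+1$ petals $S_j\setminus C$ are pairwise disjoint, some $S_j\setminus C$ misses $R$; as $R\cap C=\emptyset$ too, $S_j\cap R=\emptyset$, contradicting $\nu(\hht)=s$. You had the right pigeonhole step, but you applied it after avoiding only one vertex of $C$ instead of both.
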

\begin{proof}
By symmetry, let $S_0,S_1,\ldots,S_{3s}\in \hht$ form a pseudo sunflower with center $C\subsetneq S_0$. Let $R\in \hr(\overline{C})$ ($R$ exists by 2-resilience). As $|R|=3s$ we can find $j$ with $R\cap (S_j\setminus C)=\emptyset$. It follows that $R\cap S_j=\emptyset$, a contradiction.
\end{proof}

\begin{cor}\label{cor-1}
For any $P\in \binom{[n]}{2}$, $|\hht(P)|\leq 3s$.
\end{cor}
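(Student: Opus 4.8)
The plan is to read off the conclusion directly from Fact \ref{fact-2}, since the edges through a fixed pair are exactly a pseudo sunflower centered at that pair. Concretely, fix $P\in\binom{[n]}{2}$ and write the members of $\hht$ containing $P$ as $P\cup\{x_1\},P\cup\{x_2\},\dots,P\cup\{x_m\}$, where $x_1,\dots,x_m$ are pairwise distinct vertices of $[n]\setminus P$ (there are exactly $|\hht(P)|=m$ of them). The goal is to show $m\le 3s$.

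First I would check that these $m$ sets form a pseudo sunflower with center $P$: the sets are distinct because the $x_i$ are distinct, we have $P\subsetneq P\cup\{x_1\}$, and the ``petals'' $(P\cup\{x_i\})\setminus P=\{x_i\}$ are pairwise disjoint. Note also that $|(P\cup\{x_1\})\setminus P|=1$, so this pseudo sunflower meets the normalization convention adopted in the paragraph preceding Theorem \ref{psudo-sunflower}; that convention is not actually needed here, but it is worth recording that nothing is lost.

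Then I would argue by contradiction: if $m\ge 3s+1$, the sets $P\cup\{x_1\},\dots,P\cup\{x_{3s+1}\}$ constitute a pseudo sunflower of size $3s+1$ in $\hht$, contradicting Fact \ref{fact-2}. Hence $m\le 3s$, i.e., $|\hht(P)|\le 3s$, as claimed.

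There is essentially no obstacle here; the only point that deserves a moment's care is the bookkeeping between ``size $r+1$'' in the definition of pseudo sunflower and the threshold ``$3s+1$'' in Fact \ref{fact-2} — one must make sure that $3s+1$ edges through $P$ really give a pseudo sunflower of the forbidden size $3s+1$ (i.e.\ with indices $S_0,\dots,S_{3s}$), which they do. So the corollary is an immediate consequence of Fact \ref{fact-2}.
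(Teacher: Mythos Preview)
Your proof is correct and matches the paper's intent exactly: the corollary is stated immediately after Fact~\ref{fact-2} with no separate proof, and your argument---that the edges through a fixed pair $P$ form a pseudo sunflower with center $P$, so more than $3s$ of them would violate Fact~\ref{fact-2}---is precisely the one-line deduction the authors have in mind.
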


Let us fix a matching $T_1,T_2,\ldots,T_s\in \hht$ and let $R=T_1\cup T_2\cup \cdots \cup T_s$.

\begin{fact}\label{fact-3}
For any $x\in R$,  $\hht(x,R)$ does not contain a pseudo sunflower of size $2s$. Moreover, if $\hr$ is $(s+1)$-intersecting then $\hht(x,R)$ does not contain a pseudo sunflower of size $2s-1$.
\end{fact}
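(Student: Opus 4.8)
The plan is to argue by contradiction, mimicking the proof of Fact \ref{fact-2} but working inside the link $\hht(x,R)$ and exploiting that the elements of the pseudo sunflower avoid $x$. Suppose first that $S_0\setminus\{x\}, S_1\setminus\{x\},\ldots$ form a pseudo sunflower in $\hht(x,R)$ of size $2s$, where each $S_i=\{x\}\cup(S_i\setminus\{x\})\in\hht$ and $S_i\cap R=\{x\}$. Say $x\in T_1$. I want to build a matching in $\hht(\bar{T})$ for a suitable small $T$ that has size $s$, contradicting nothing directly; instead the cleaner route is to produce a matching of size $s+1$ in $\hht$ itself, contradicting $\nu(\hht)=s$. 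The idea: the sets $T_2,\ldots,T_s$ are untouched, and they use $3(s-1)$ vertices of $R$. Take a member $Q\in\hr(\overline{T_1})$, which exists by $2$-resilience applied to the pair inside $T_1$ (more precisely, omit the two vertices of $T_1$ other than $x$, then omit $x$; altogether we may realize $\hht^s$ avoiding all of $T_1$ by applying resilience twice or by noting $|T_1\setminus\{x\}|=2$). Wait — resilience only lets us delete at most $2$ vertices, so I should delete $T_1\setminus\{x\}$ (two vertices) to get a matching $U_1,\ldots,U_s$ avoiding those two vertices but possibly meeting $x$; at most one $U_i$ contains $x$, discard it to get a matching of $s-1$ edges disjoint from $T_1$. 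Now these $s-1$ edges occupy $3(s-1)$ vertices, none of which is $x$; since the $2s$ petals $S_i\setminus\{x\}$ are pairwise disjoint singleton-free...

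Let me restate the intended clean argument. Delete the two vertices of $T_1\setminus\{x\}$; by $2$-resilience there is a matching $M$ of size $s$ in $\hht$ avoiding them. Remove the at most one edge of $M$ meeting $x$: we get $s-1$ disjoint edges $M'$ avoiding $T_1$, occupying a $3(s-1)$-set $W$ with $x\notin W$. The petals $S_i\setminus\{x\}$, $0\le i\le 2s-1$, are $2s$ pairwise disjoint pairs; since $|W|=3s-3<2\cdot(2s)$ once $s\ge 2$... this counting is not quite tight, so the real point is: at most $3s-3$ of the petals can meet $W$, hence some petal $S_j\setminus\{x\}$ is disjoint from $W$, and also disjoint from $R\setminus\{x\}\supseteq$ (the rest), but $S_j$ contains $x\notin W$, so $S_j\cup M'$ is a matching of size $s$, and then adjoining... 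I still only reach size $s$. The resolution is that $M'$ together with $S_j$ gives a matching of $s$ edges all avoiding $T_1$, hence avoiding $x$ except for $S_j$; but I have spare room. Actually the correct bound is that we need the petals to dodge $W\cup(S_j\text{'s already used})$, and with $2s$ petals against $3(s-1)+2(\text{one used petal's interplay})$ vertices one finds \emph{two} disjoint petals $S_j,S_{j'}$ both avoiding $W$ — but they share $x$. So instead: repeat the resilience trick on $T_i$ for each $i$; the key identity is that the petals live on $[n]\setminus R$ essentially, and a single $R'\in\hr(\overline{\{x\}})$ (delete just $x$, one vertex, allowed) has $|R'|=3s$ and misses $x$, so some petal $S_j\setminus\{x\}$ is disjoint from $R'$, whence $S_j\cap R'=\emptyset$ since $x\notin R'$, contradicting the $s$-intersecting property of... no, $S_j$ is a single edge, not in $\hr$.

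Here is the argument I will actually commit to. Delete $x$ (one vertex, permitted by $2$-resilience): there is $R'=U_1\cup\cdots\cup U_s\in\hr$ with $x\notin R'$, so $|R'|=3s$. Among the $2s$ disjoint petals $S_i\setminus\{x\}$, at most $3s$ meet $R'$, which is no contradiction yet; but crucially each $U_a$ can meet at most... we must use Fact \ref{fact-2}-style freedom more carefully. The honest statement is: apply $2$-resilience to delete the two vertices $T_1\setminus\{x\}$; the resulting size-$s$ matching, after dropping its unique edge through $x$ (if any), yields $s-1$ disjoint edges avoiding $T_1$ entirely, together with $T_2,\ldots,T_s$ giving nothing new — so instead one builds a fresh maximal matching $V_1,\ldots,V_s$ avoiding $T_1\setminus\{x\}$, notes by maximality and $\nu=s$ that one $V_b\ni x$, say $V_1$, and that $V_2\cup\cdots\cup V_s$ is a $3(s-1)$-set missing $x$; some petal $S_j\setminus\{x\}$ avoids it, and also (choosing $j$ among the remaining $2s-(3s-3)-1 = $ still $\ge 1$ for $s\le 2$ only) avoids $V_1\setminus\{x\}$, giving the matching $S_j,V_2,\ldots,V_s$ of size $s$ plus a dangling structure — the extra petal $S_{j'}\setminus\{x\}$ disjoint from everything then shows $S_{j'}$ is disjoint from all of $V_1\cup\cdots\cup V_s$ once we route $x$ correctly, i.e. $S_{j'}, S_j, V_2,\ldots,V_s$ would be $s+1$ disjoint edges. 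The main obstacle, and the reason the ``moreover'' clause hypothesizes $(s+1)$-intersecting $\hr$, is precisely the bookkeeping of the vertex $x$: generically one petal must be sacrificed to host $x$, costing one from the pseudo sunflower size (hence $2s$, not $2s-1$), but if $\hr$ is $(s+1)$-intersecting the deleted matching must overlap $R$ in $s+1$ places, pinning down more structure and buying back that extra unit to force the sharper bound $2s-1$. I will write the first part in full and then indicate the modification of the counting for the ``moreover'' clause, the delicate point being to verify that $(s+1)$-intersecting-ness of $\hr$ translates into one additional forced intersection inside the link $\hht(x,R)$.
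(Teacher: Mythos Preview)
Your proposal is not a proof: it is a sequence of attempted arguments, none of which is carried to completion, and the key idea is missing throughout. The paper's proof hinges on one move you never make: it uses $2$-resilience to delete the pair $\{x,y\}$, where $y$ is the \emph{center} of the pseudo sunflower $F_0,\ldots,F_{p-1}$ inside $\hht(x,R)$. This produces $R'\in\hr$ with $x,y\notin R'$. Since each $F_\ell\cup\{x\}\in\hht$ must meet $R'$, and $x\notin R'$, each $F_\ell$ meets $R'$; since $y\notin R'$ the intersections $F_\ell\cap R'=(F_\ell\setminus\{y\})\cap R'$ are pairwise disjoint nonempty subsets of $R'\setminus R$, forcing $|R'\setminus R|\ge p$. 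With $p=2s$ this yields $|R\cap R'|\le s$, and the boundary case $|R\cap R'|=s$ is dispatched by the permutation structure from Fact~\ref{fact-1}. The ``moreover'' clause simply runs the same inequality with $p=2s-1$ against $|R\cap R'|\ge s+1$.

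Your attempts delete either $\{x\}$ alone or $T_1\setminus\{x\}$, but never the center $y$. This is fatal. You repeatedly treat the ``petals $S_i\setminus\{x\}$'' (i.e.\ the $2$-sets $F_i\in\hht(x,R)$) as pairwise disjoint, but in a pseudo sunflower only the $F_i\setminus\{y\}$ are pairwise disjoint; the $F_i$ themselves may all contain $y$. Consequently, if the auxiliary matching $R'$ you build happens to contain $y$, then every $F_i$ with $y\in F_i$ meets $R'$ at that single vertex, and your counting of how many $F_i$ can meet $R'$ (or $W$, or $V_2\cup\cdots\cup V_s$) collapses: you cannot conclude that some $F_j$ is disjoint from $R'$. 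This is precisely why the paper spends one of the two allowed deletions on $y$ rather than on a second vertex of $T_1$.
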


\begin{proof}
Let $x\in R$ and $F_0,F_1,\ldots,F_{p-1}$ a pseudo sunflower with center $y$ in $\hht(x,R)$. Using 2-resilience fix $R'\in \hr(\bar{x},\bar{y})$. By $\nu(\hht)=s$, $(R'\setminus R)\cap F_\ell\neq \emptyset$ for all $0\leq \ell <p$. In particular,
\begin{align}\label{ineq-star}
|R'|-|R\cap R'| \geq p.
\end{align}

Since $|R'|=3s$ and $\hr$ is $s$-intersecting for $p=2s$ we get a contradiction unless $|R\cap R'|=s$. However in this case letting $R=T_1\cup T_2,\ldots \cup T_s$, $R'=V_1\cup V_2,\ldots \cup V_s$  we may assume $|T_i\cap V_i|=1$, $1\leq i\leq s$ and $x\in T_1\setminus V_1$. By $|V_1\setminus T_1|\leq 2<p=2s$ we can choose $0\leq k <p$ so that $F_k\cap V_1=\emptyset$. Then $\{x\}\cup F_k,V_1,T_2,\ldots,T_s$ form a matching of size $s+1$, a contradiction.

Assuming $p=2s-1$ and that $\hr$ is $(s+1)$-intersecting, in view of \eqref{ineq-star} the only remaining case is $|R\cap R'|=s+1$. By $\nu(\hht)=s$ we
can still  assume that $V_i\cap T_i\neq \emptyset$, $1\leq i\leq s$. The above argument works unchanged if $V_1\cap T_j=\emptyset$ for $2\leq j\leq s$. On the other hand, if $|V_1\cap R|\geq 2$ then using $|(V_2\cup \ldots\cup V_s)\setminus R|\leq 2s-2<p=2s-1$ we may fix $F_k$ to satisfy $(F_k\cup \{y\}) \cap V_i=\emptyset$ for $2\leq i\leq s$. Thus $(F_k\cup \{y\})\cup V_2\cup \ldots \cup V_s=:R''\in \hr$. However $|R\cap R''|<|R\cap R'|$ contradicting the $(s+1)$-intersecting property of $\hr$.
\end{proof}

\begin{fact}\label{fact-4}
Let $P\in \binom{R}{2}$. Then $|\hht(P,R)|\leq 2s$. Moreover, if $\hr$ is $(s+1)$-intersecting then  $|\hht(P,R)|\leq 2s-1$.
\end{fact}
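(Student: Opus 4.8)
The plan is to reduce the bound on $|\hht(P,R)|$ to the pseudo sunflower bounds established in Fact \ref{fact-3}, by a covering argument on the two vertices of $P$. Write $P=\{x,y\}\subset R$. Every edge $F\in \hht(P,R)$ is a set $F\subset [n]\setminus R$ of size $1$, so in fact $\hht(P,R)$ is a family of singletons — wait, that is too crude; the point is rather that $P\cup F\in\hht$ with $(P\cup F)\cap R=P$, i.e.\ $F$ is a single vertex outside $R$. So $|\hht(P,R)|$ simply counts vertices $z\notin R$ with $\{x,y,z\}\in\hht$. The idea is to compare this with $\hht(x,R)$ and $\hht(y,R)$: each such $z$ contributes the edge $\{y,z\}$ to $\hht(x,R)$ (after removing $x$), hence all these pairs $\{y,z\}$ share the common vertex $y$, so they form a pseudo sunflower with center $\{y\}$ inside $\hht(x,R)$ — here I use the convention fixed in the paper that a pseudo sunflower may have all its petals through a common extra vertex. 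By Fact \ref{fact-3}, $\hht(x,R)$ contains no pseudo sunflower of size $2s$, so the number of such $z$ is at most $2s-1$; that already gives a bound, but it is off by one from $2s$.

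To get the stated (weaker, non-intersecting) bound $|\hht(P,R)|\le 2s$ I would argue directly: suppose $|\hht(P,R)|\ge 2s+1$, giving distinct vertices $z_0,z_1,\ldots,z_{2s}\notin R$ with $\{x,y,z_i\}\in\hht$. By 2-resilience fix $R'\in\hr(\bar x,\bar y)$; then $|R'|=3s$, $R'$ avoids $x$ and $y$, and by $\nu(\hht)=s$ each $\{x,y,z_i\}$ must meet $R'$, forcing $z_i\in R'$ for every $i$, hence $|R'\setminus R|\ge 2s+1 > 2s\ge |R'\setminus R|$ once we know $|R\cap R'|\ge s$, a contradiction — and $|R\cap R'|\ge s$ is exactly $s$-intersecting of $\hr$ (Corollary after Fact \ref{fact-1}). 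For the moreover part, assume $\hr$ is $(s+1)$-intersecting and $|\hht(P,R)|\ge 2s$, i.e.\ $2s$ vertices $z_0,\ldots,z_{2s-1}\notin R$ with $\{x,y,z_i\}\in\hht$. Run the same argument: each $z_i\in R'$, so $|R'\setminus R|\ge 2s$, hence $|R\cap R'|\le s$, but $(s+1)$-intersecting forces $|R\cap R'|\ge s+1$, a contradiction.

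The cleanest writeup actually just invokes Fact \ref{fact-3} with the pseudo sunflower $\{x,y,z_i\}$ viewed, after deleting $x$, as the petals $\{y\}\cup\{z_i\}$ through center $\{y\}$ in $\hht(x,R)$: a pseudo sunflower of size $|\hht(P,R)|$. Fact \ref{fact-3} says $\hht(x,R)$ has no pseudo sunflower of size $2s$, giving $|\hht(P,R)|\le 2s-1$ — but this contradicts the claimed bound being only $2s$, so I must be careful about the off-by-one in how petals-through-a-point count: a family of $m$ sets $\{y,z_1\},\ldots,\{y,z_m\}$ is a pseudo sunflower of size $m$, so "no pseudo sunflower of size $2s$" gives $m\le 2s-1$, which is actually \emph{stronger} than $2s$. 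I therefore expect the intended proof is the direct one via $R'$ above, with Fact \ref{fact-3} quoted only to streamline, and the main obstacle is bookkeeping the $s$-intersecting versus $(s+1)$-intersecting dichotomy consistently — precisely the place where the first proof's reliance on "$|R\cap R'|=s$ forces a special structure" must be replaced by the simpler counting $|R'\setminus R|\ge|\hht(P,R)|$ together with $|R\cap R'|\ge s$ (resp.\ $\ge s+1$).
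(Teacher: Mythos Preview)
Your direct argument (second paragraph) is correct; for the ``moreover'' part it is literally the paper's proof, and for the first bound it is the same idea in slightly different packaging: the paper observes that the $2s+1$ edges $P\cup\{z_i\}$ together with $T_1,\ldots,T_s$ form a pseudo sunflower of size $3s+1$ with center $P$ and quotes Fact~\ref{fact-2}, whose proof is exactly your $R'$-counting.

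Your detours through Fact~\ref{fact-3}, however, do not work, and this is the source of your off-by-one confusion. The pairs $\{y,z_i\}$ are \emph{not} members of $\hht(x,R)$: by definition $\hht(x,R)=\{F\setminus\{x\}:F\in\hht,\ F\cap R=\{x\}\}$ consists of $2$-sets contained in $[n]\setminus R$, whereas $y\in P\subset R$. So Fact~\ref{fact-3} simply does not apply to these pairs, and the apparently-too-strong bound $|\hht(P,R)|\le 2s-1$ you derived from it is spurious. Drop those paragraphs and keep only the direct $R'$-argument.
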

\begin{proof}
Assume that $\{x_i\}\in \hht(P,R)$ for $i=1,2,\ldots, 2s+1$. Then $P\cup \{x_1\},P\cup \{x_2\},\ldots,P\cup \{x_{2s+1}\},T_1,T_2,\ldots,T_s$ form a pseudo sunflower with center $P$, contradicting Fact \ref{fact-2}. Thus $|\hht(P,R)|\leq 2s$.

If $\hr$ is $(s+1)$-intersecting and $\{x_i\}\in \hht(P,R)$ for $i=1,2,\ldots, 2s$. Let $R'\in \hr$ satisfy $R'\cap P=\emptyset$. Then $\{x_1,x_2,\ldots,x_{2s}\}\subset R'$ implies $|R'\cap R|=s$, contradicting the $(s+1)$-intersecting property of $\hr$.
\end{proof}

For a graph $\hg$ let $\Delta(\hg)$ denote its maximum degree.

\begin{fact}\label{fact-4.4}
Suppose that a graph $\hg$ contains no pseudo sunflower of size three. Then $\hg$ is a triangle or a subgraph of a $C_4$.
\end{fact}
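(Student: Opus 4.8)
The plan is to convert the hypothesis ``$\hg$ contains no pseudo sunflower of size $3$'' into three elementary local restrictions and then read off the classification. Note first that when $S_0$ is an edge the normalization $|S_0\setminus C|=1$ forces $|C|=1$, so every center in sight is a single vertex. The first step is to show $\Delta(\hg)\leq 2$: if a vertex $v$ had three neighbours $a,b,c$, then $\{v,a\},\{v,b\},\{v,c\}$ would be a pseudo sunflower of size $3$ with center $\{v\}$ and petals $\{a\},\{b\},\{c\}$. Consequently every component of $\hg$ is a path or a cycle.

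The key second step is the observation that $\hg$ contains no configuration consisting of a path $xyz$ of length two together with an edge $e$ disjoint from $\{x,y,z\}$: otherwise $\{y,z\},\{x,y\},e$ would be a pseudo sunflower of size $3$ with center $\{y\}$ and petals $\{z\},\{x\},e$. I would also record the trivial third restriction that three pairwise disjoint edges are themselves such a pseudo sunflower (center any vertex of one of them), so $\nu(\hg)\leq 2$.

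Then I would finish by a brief case analysis according to which components of $\hg$ carry edges. Any path or cycle with at least two edges contains a ``cherry'' $xyz$ formed by two consecutive edges. Hence, if two distinct components both carried an edge, neither could carry two (a cherry in one is disjoint from any edge of the other, contradicting the second step), so each such component would be a single edge; and there could be no third such component, by the third restriction; thus $\hg$ would be two disjoint edges together with isolated vertices, a subgraph of $C_4$. If instead a single component $D$ carries all the edges, then $D$ cannot be a path with $\geq 4$ edges (take the cherry on its first three vertices and the edge on its fourth and fifth vertices) nor a cycle on $\geq 5$ vertices (the same configuration), so $D$ is either a path with at most three edges, or $C_4$, or a triangle; a path with at most three edges, $C_4$, and the empty graph are all subgraphs of $C_4$. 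In every case $\hg$ is a triangle or a subgraph of $C_4$.

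I do not anticipate a genuine obstacle: the argument is entirely elementary. The only mild point of care is the normalization remark (so that centers are single vertices) together with the implicit fact that the small graphs $C_3$, $C_4$, and the path with three edges really do avoid all pseudo sunflowers of size $3$ --- which is automatic, since ``a cherry plus a disjoint edge'' is precisely the configuration that fails to occur in them but does occur in a path with four edges and in the $5$-cycle, which is what forces the bounds on the component sizes.
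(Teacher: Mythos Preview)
Your proof is correct. Both arguments begin with the observation that three edges through a common vertex form a pseudo sunflower, so $\Delta(\hg)\le 2$. From there the paper takes a slightly shorter route: it splits on whether $\hg$ is intersecting. If it is, $\Delta(\hg)\le 2$ immediately gives $\hg\subset C_3$. If not, fix two disjoint edges on $\{1,2,3,4\}$ and observe that any further edge must lie inside $\binom{[4]}{2}$ (an edge touching $[4]$ in at most one vertex produces a pseudo sunflower with one of the two fixed edges and center in the other); then $\Delta(\hg)\le 2$ on $K_4$ forces $\hg\subset C_4$.

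Your route instead isolates the forbidden configuration ``cherry plus disjoint edge'' and runs a component analysis to bound the length of paths and cycles. The underlying combinatorics is the same --- the paper's step ``every edge lies in $\binom{[4]}{2}$'' is exactly your cherry observation applied at a vertex of one of the two fixed edges --- but the packaging differs: the paper's vertex-set bound is more direct, while your decomposition makes the structural picture (each component is a short path or a small cycle) explicit.
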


\begin{proof}
If $\hg$ is intersecting then $\Delta(\hg)\leq 2$ implies $\hg\subset C_3$. Otherwise let $(1,2),(3,4)$ be two disjoint edges in $\hg$. Then $\hg\subset \binom{[4]}{2}$ and via $\Delta(\hg)\leq 2$, $\hg \subset C_4$ follows.
\end{proof}

 For disjoint sets $A,B$ let
\[
A\times B=\{(a,b)\colon a\in A,\ b\in B\}.
\]

\begin{fact}\label{fact-6}
Let $d\geq 3$. Suppose that $Q$ is a set and $\hg_1,\hg_2$ are graphs with $\Delta(\hg_i)\leq d$ and  $|\hg_i|>(|Q|+1)d$. Then there exist $E_i\in \hg_i$ with $E_1\cap E_2=\emptyset$ and $E_i\cap Q=\emptyset$, $i=1,2$.
\end{fact}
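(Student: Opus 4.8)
The plan is to pass to the parts of $\hg_1$ and $\hg_2$ that avoid $Q$, and then to show that two cross-intersecting graphs cannot both be large. For $i=1,2$ put $\hg_i'=\{E\in\hg_i\colon E\cap Q=\emptyset\}$. Every vertex of $Q$ lies in at most $\Delta(\hg_i)\le d$ edges of $\hg_i$, so at most $|Q|\,d$ edges of $\hg_i$ meet $Q$; hence $|\hg_i'|\ge |\hg_i|-|Q|\,d>d$, i.e.\ $|\hg_i'|\ge d+1\ge 4$. It is enough to find disjoint $E_1\in\hg_1'$ and $E_2\in\hg_2'$, so assume for contradiction that no such pair exists; equivalently, $\hg_1'$ and $\hg_2'$ are \emph{cross-intersecting}, meaning every edge of one meets every edge of the other.

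The structural input I would use is elementary: a graph with no two disjoint edges is a star or a triangle. (If its edges do not all pass through one vertex, three of them form a triangle $T$; any further edge must meet all three edges of $T$, hence lies inside $V(T)$ and is itself an edge of $T$.) Such a graph therefore has at most $\max\{3,\Delta\}\le d$ edges, so from $|\hg_i'|\ge d+1$ we get that each of $\hg_1'$ and $\hg_2'$ contains two disjoint edges. Fix disjoint $E=\{a,b\}$ and $E'=\{c,e\}$ in $\hg_1'$. Each edge of $\hg_2'$ meets the disjoint $2$-sets $E$ and $E'$ and therefore has exactly one endpoint in each; thus $\hg_2'$ is contained in the set of four edges joining $E$ to $E'$, so $|\hg_2'|\le 4$. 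Together with $|\hg_2'|\ge d+1\ge 4$ this forces $d=3$ and $\hg_2'=\{\{a,c\},\{a,e\},\{b,c\},\{b,e\}\}$.

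To conclude, note that this $\hg_2'$ contains the two disjoint pairs $\{a,c\},\{b,e\}$ and $\{a,e\},\{b,c\}$. Cross-intersectingness now forces every edge of $\hg_1'$ to meet both $\{a,c\}$ and $\{b,e\}$, hence to lie in $\{\{a,b\},\{a,e\},\{c,b\},\{c,e\}\}$, and also to meet both $\{a,e\}$ and $\{b,c\}$, hence to lie in $\{\{a,b\},\{a,c\},\{e,b\},\{e,c\}\}$. The only edges appearing in both lists are $\{a,b\}$ and $\{c,e\}$, so $|\hg_1'|\le 2$, contradicting $|\hg_1'|\ge 4$.

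The argument is mostly bookkeeping; the one delicate point is the tight case $d=3$. For $d\ge 4$ the inequality $|\hg_2'|\le 4<d+1\le|\hg_2'|$ already gives a contradiction in the second paragraph, but for $d=3$ one genuinely has to use that \emph{both} $\hg_1'$ and $\hg_2'$ contain two disjoint edges (equivalently, that a graph cross-intersecting a copy of $C_4$ has at most two edges). Accordingly, the only place where real care is needed is keeping the edge count down to the bound $|\hg_i'|\ge d+1$ rather than anything weaker.
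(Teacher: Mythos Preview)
Your proof is correct and follows essentially the same approach as the paper: restrict to the $Q$-avoiding subfamilies $\hh_i$, bound $|\hh_i|\ge d+1$, find two disjoint edges in each, and derive a contradiction from cross-intersection. The only difference is in the endgame: the paper observes that both $\hh_1,\hh_2$ end up inside the same $4$-set and uses the complement bijection on $\binom{[4]}{2}$ to get $|\hh_1|+|\hh_2|\le 6$ (hence $\min|\hh_i|\le 3$), whereas you split off the case $d=3$ and argue directly that a graph cross-intersecting a full $C_4$ has at most two edges. Both finishes are short and equivalent in spirit.
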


\begin{proof}
Define $\hh_i=\{E_i\in \hg_i\colon E_i\cap Q=\emptyset\}$ . Obviously,
\[
|\hh_i|\geq |\hg_i| -|Q|d \geq d+1.
\]
Hence we can find a pair $E_i,F_i$ of disjoint edges in $\hh_i$. If $\hh_1$ and $\hh_2$ are cross-intersecting, then $\hh_i\subset E_{3-i}\times F_{3-i}$, $i=1,2$ follows. Using $d+1\geq 4$ we infer that $E_1\cup F_1=E_2\cup F_2$ is the same 4-set. Now $|\hh_1|+|\hh_2|\leq \binom{4}{2}$ and $\min|\hh_i|\leq 3$ follow, a contradiction.
\end{proof}

\begin{fact}\label{fact-general1}
Suppose that $\hg_1,\hg_2,\hg_3$ are non-empty graphs of maximum degree at most $d$ ($d\geq 3$). If $|\hg_i|>3d$ then there is a cross-matching $E_1\in \hg_1$, $E_2\in \hg_2$ and $E_3\in \hg_3$.
\end{fact}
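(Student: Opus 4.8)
The plan is to bootstrap from Fact~\ref{fact-6}, which already handles the two-graph version of the statement relative to a forbidden set $Q$. Since $\hg_3$ is non-empty, fix an arbitrary edge $E_3\in\hg_3$ and set $Q:=E_3$, so that $|Q|=2$. Then apply Fact~\ref{fact-6} to the pair $\hg_1,\hg_2$ with this choice of $Q$: we have $d\geq 3$, $\Delta(\hg_i)\leq d$ for $i=1,2$, and the crucial counting hypothesis $|\hg_i|>(|Q|+1)d=3d$ holds for $i=1,2$ by assumption. Fact~\ref{fact-6} then produces $E_1\in\hg_1$ and $E_2\in\hg_2$ with $E_1\cap E_2=\emptyset$ and $E_1\cap Q=E_2\cap Q=\emptyset$, i.e. $E_1\cap E_3=E_2\cap E_3=\emptyset$. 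Hence $E_1,E_2,E_3$ are pairwise disjoint and form the desired cross-matching.

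There is essentially no obstacle here: the content of the fact lies entirely in Fact~\ref{fact-6}, and the only point to verify is that the threshold $3d$ appearing in the hypothesis is exactly $(|Q|+1)d$ for a $2$-element set $Q$, which it is. If one preferred a self-contained argument, one would simply rerun the proof of Fact~\ref{fact-6} verbatim with $Q=E_3$: discard from each of $\hg_1,\hg_2$ the at most $2d$ edges meeting $E_3$, leaving more than $3d-2d=d$, hence at least $d+1\geq 4$, edges in each; pick disjoint pairs $E_1,F_1\in\hg_1$ and $E_2,F_2\in\hg_2$ among the survivors; and if the survivor parts of $\hg_1$ and $\hg_2$ were cross-intersecting, deduce $E_1\cup F_1=E_2\cup F_2$ is a single $4$-set, so that the two survivor parts together have at most $\binom{4}{2}=6$ edges, contradicting $d+1\geq 4$.
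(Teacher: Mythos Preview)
Your proof is correct and essentially identical to the paper's: the paper fixes $E_1\in\hg_1$ and applies Fact~\ref{fact-6} with $Q=E_1$ to the pair $\hg_2,\hg_3$, while you fix $E_3\in\hg_3$ and apply Fact~\ref{fact-6} with $Q=E_3$ to the pair $\hg_1,\hg_2$. The two arguments differ only by a permutation of indices.
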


\begin{proof}
Fix $E_1\in \hg_1$. By Fact \ref{fact-6} there exist $E_2\in \hg_2$ and $E_3\in \hg_3$ with $E_2\cap E_3=\emptyset$ and $E_i\cap E_1=\emptyset$, $i=2,3$.
\end{proof}

Let us turn to the proof of Theorem \ref{thm-main3}. Let $\hht\subset \binom{[n]}{3}$ be a 2-resilient 3-graph with $\nu(\hht)=s$.
Let  $T_1,T_2,\ldots,T_s\in \hht$ be a matching and let $R=T_1\cup T_2\cup \cdots \cup T_s$. Define
\[
R_0=\left\{x\in R\colon |\hht(x,R)|\leq 6s\right\} \mbox{ and } R_1=R\setminus R_0.
\]

\begin{cor}
$|R_1\cap T_i|\leq 1$.
\end{cor}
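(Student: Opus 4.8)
The plan is to argue by contradiction. Suppose some $T_i$ contains two distinct vertices $a,b\in R_1$; I will manufacture a matching of size $s+1$ in $\hht$, contradicting $\nu(\hht)=s$.

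First I would set $\hg_a:=\hht(a,R)$ and $\hg_b:=\hht(b,R)$, which are graphs on the vertex set $[n]\setminus R$. Since $a,b\in R_1$ we have $|\hg_a|,|\hg_b|\geq 6s+1$, so in particular both are non-empty. I also need a bound on their maximum degrees: for any vertex $y\notin R$, every edge of $\hg_a$ through $y$ has the form $\{y,z\}$ with $\{a,y,z\}\in\hht$, and there are at most $|\hht(\{a,y\})|\leq 3s$ such edges by Corollary~\ref{cor-1}; hence $\Delta(\hg_a)\leq 3s$, and likewise $\Delta(\hg_b)\leq 3s$.

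Next I would invoke Fact~\ref{fact-6} with $Q=\emptyset$ and $d=3s$ (note $d\geq 3$ and $|\hg_a|,|\hg_b|>3s=(|Q|+1)d$) to obtain edges $E_a\in\hg_a$ and $E_b\in\hg_b$ with $E_a\cap E_b=\emptyset$. Then $\{a\}\cup E_a$ and $\{b\}\cup E_b$ are edges of $\hht$ by the definition of $\hht(\,\cdot\,,R)$; they are disjoint from one another because $a\neq b$, $E_a\cap E_b=\emptyset$, and $a,b\in R$ while $E_a,E_b\subseteq[n]\setminus R$; and each of them meets $R$ only inside $T_i$, so each is disjoint from $T_j$ for every $j\neq i$. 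Therefore $\{a\}\cup E_a,\ \{b\}\cup E_b,\ T_1,\dots,T_{i-1},T_{i+1},\dots,T_s$ is a matching of size $s+1$ in $\hht$, the desired contradiction.

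There is no genuine obstacle here; the only points that need care are the degree estimate $\Delta(\hg_a)\leq 3s$ (which is just Corollary~\ref{cor-1} applied to a pair through $a$) and the bookkeeping showing that the two new edges, together with the remaining $T_j$'s, really do form a matching. If one prefers not to cite Fact~\ref{fact-6}, the same conclusion follows directly: fixing any $\{p,q\}\in\hg_b$, if every edge of $\hg_a$ met $\{p,q\}$ then $\hg_a$ would lie in the union of the stars at $p$ and at $q$, forcing $|\hg_a|\leq 2\Delta(\hg_a)\leq 6s$, which contradicts $|\hg_a|\geq 6s+1$.
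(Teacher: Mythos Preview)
Your proof is correct and follows essentially the same approach as the paper's: assume two vertices of some $T_i$ lie in $R_1$, find disjoint edges in their two link graphs, and extend to a matching of size $s+1$. The only cosmetic differences are that the paper cites Fact~\ref{fact-general1} (whose two-graph special case is exactly your use of Fact~\ref{fact-6} with $Q=\emptyset$) and implicitly relies on the degree bound $2s-1$ coming from Fact~\ref{fact-3}, whereas you use the slightly weaker bound $3s$ from Corollary~\ref{cor-1}; either bound suffices, and your direct star-counting alternative at the end is also valid.
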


\begin{proof}
Suppose that $x,y\in R_1\cap T_i$. Then by Fact \ref{fact-general1} there exist disjoint edges  $E_1\in \hht(x,R)$ and $E_2\in \hht(y,R)$.
It follows that $T_1,\ldots,T_{i-1},T_{i+1},\ldots,T_s, E_1\cup \{x\}, E_2\cup \{y\}$ form a matching, contradicting $\nu(\hht)=s$.
\end{proof}

Without loss of generality, assume that $R_1=\{x_1,x_2,\ldots,x_t\}$ for some $0\leq t\leq s$ and $x_i\in T_i$ for $i=1,2,\ldots,t$.

\begin{fact}\label{fact-general2}
Let $1\leq i<j\leq t$. Suppose that $y_i\in T_i\setminus \{x_i\}$ and $y_j\in T_j\setminus \{x_j\}$. Then $\hht(\{y_i,y_j\},R)=\emptyset$ and $\hht(y_i,R)=\emptyset$.
\end{fact}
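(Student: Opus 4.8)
\textit{Proof proposal.} The plan is to assume that one of $\hht(\{y_i,y_j\},R)$, $\hht(y_i,R)$ is nonempty and then build a matching of size $s+1$ in $\hht$, contradicting $\nu(\hht)=s$. The only feature of $T_i,T_j$ I will use is that $x_i,x_j\in R_1$, so that $|\hht(x_i,R)|>6s$ and $|\hht(x_j,R)|>6s$. I will also record, once and for all, that for every $x\in R$ the graph $\hht(x,R)$ on the ground set $[n]\setminus R$ has maximum degree at most $2s$: the degree of $z\notin R$ in $\hht(x,R)$ equals the number of $w\in[n]\setminus R$ with $\{x,z,w\}\in\hht$, and if $R'\in\hr$ satisfies $R'\cap\{x,z\}=\emptyset$ (such $R'$ exists by $2$-resilience), then every such $w$ lies in $R'\setminus R$, a set of size at most $2s$ because $\hr$ is $s$-intersecting.

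First I would prove $\hht(\{y_i,y_j\},R)=\emptyset$. Suppose not, say $\{y_i,y_j,z\}\in\hht$ with $z\notin R$. The hypothesis $i<j\le t$ forces $t\ge 2$, hence $s\ge 2$ and $d:=2s\ge 3$. Apply Fact~\ref{fact-6} with $\hg_1=\hht(x_i,R)$, $\hg_2=\hht(x_j,R)$, this $d$, and $Q=\{z\}$: since $(|Q|+1)d=4s<6s<|\hg_\ell|$, we get disjoint $E_1\in\hg_1$, $E_2\in\hg_2$ with $z\notin E_1\cup E_2$. Then
\[
\{x_i\}\cup E_1,\quad \{x_j\}\cup E_2,\quad \{y_i,y_j,z\},\quad T_k\ \ (k\in[s]\setminus\{i,j\})
\]
are pairwise disjoint — $E_1,E_2$ miss $R$, miss $z$, and miss each other, while $x_i,x_j,y_i,y_j\in T_i\cup T_j$, which has been removed — so this is a matching of size $s+1$, a contradiction.

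For $\hht(y_i,R)=\emptyset$ the argument is the same, with one more forbidden point. Suppose $\{y_i,a,b\}\in\hht$ with $a,b\notin R$ (so $a\neq b$). Apply Fact~\ref{fact-6} to the same $\hg_1,\hg_2$ with $d=2s$ and $Q=\{a,b\}$: now $(|Q|+1)d=6s<|\hg_\ell|$, so there are disjoint $E_1\in\hg_1$, $E_2\in\hg_2$ with $(E_1\cup E_2)\cap\{a,b\}=\emptyset$, and
\[
\{x_i\}\cup E_1,\quad \{x_j\}\cup E_2,\quad \{y_i,a,b\},\quad T_k\ \ (k\in[s]\setminus\{i,j\})
\]
is again a matching of size $s+1$, a contradiction.

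The proof is essentially bookkeeping, and I do not expect a genuine obstacle. The one point that must be gotten right is the choice of constants: one genuinely needs the degree bound $2s$, not the weaker $3s$ coming straight from Corollary~\ref{cor-1}, because in the second assertion the set $Q=\{a,b\}$ has size $2$ and Fact~\ref{fact-6} then requires $|\hg_\ell|>(2+1)\cdot 2s=6s$ — which is exactly the threshold used to define $R_1$. Lining up this inequality (and checking the routine disjointness claims above) is all there is to it.
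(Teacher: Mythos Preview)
Your argument is correct. For the first assertion it is exactly the paper's proof: apply Fact~\ref{fact-6} with $Q=\{z\}$ to $\hht(x_i,R)$ and $\hht(x_j,R)$ and read off the forbidden matching.

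For the second assertion your route differs slightly from the paper's. The paper does not bring $x_j$ in at all: given $E\in\hht(y_i,R)$, it simply notes that $x_i\in R_1$ means $|\hht(x_i,R)|>6s$, while at most $6s$ edges of $\hht(x_i,R)$ can meet the two vertices of $E$; hence some $E'\in\hht(x_i,R)$ is disjoint from $E$, and $\{y_i\}\cup E,\ \{x_i\}\cup E',\ T_k\ (k\neq i)$ is already a matching of size $s+1$. Your version, re-invoking Fact~\ref{fact-6} with $Q=\{a,b\}$, is perfectly valid but does a little more work and genuinely needs the sharper degree bound $\Delta\le 2s$ you derived (so that $(|Q|+1)d=6s$ matches the $R_1$ threshold). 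The paper's one-line argument for this part would go through even with the cruder bound $\Delta\le 3s$ from Corollary~\ref{cor-1}, and it shows that the conclusion $\hht(y_i,R)=\emptyset$ really only uses $i\le t$, not the existence of a second index $j$.
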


\begin{proof}
Assume for contradiction $\{y_i,y_j,z\}\in\hht$ with $z\notin R$. Then apply Fact \ref{fact-6} with $Q=\{z\}$, to find disjoint edges $E_i\in \hht(x_i,R)$ and $E_j\in \hht(x_j,R)$. Now $E_i\cup \{x_i\}, E_j\cup  \{x_j\}$, $\{y_i,y_j,z\}$ form a matching, contradicting $\nu(\hht)=s$.

If $\hht(y_i,R)\neq \emptyset$, let $E\in \hht(y_i,R)$. Then  by $x_i\in R_1$ there exists $E'\in \hht(x_i,R)$ with $E\cap E'=\emptyset$, contradicting $\nu(\hht)=s$. Thus $\hht(y_i,R)=\emptyset$.
\end{proof}

By Facts \ref{fact-general2} and \ref{fact-4}, we infer that for $1\leq i<j\leq t$,
\begin{align}\label{ineq-2.6}
\sum_{P\in T_i\times T_j} |\hht(P,R)| \leq 2s\times 5=10s.
\end{align}

\begin{fact}\label{fact-general3}
For $1\leq i<j\leq s$ and $t+1\leq j\leq s$,
\begin{align}\label{ineq-2.7}
\sum_{P\in T_i\times T_j} |\hht(P,R)| \leq 12s.
\end{align}
\end{fact}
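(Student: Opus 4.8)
The plan is to estimate $\sum_{P\in T_i\times T_j}|\hht(P,R)|$ not vertex‑by‑vertex but \emph{matching‑by‑matching}: partition the nine pairs of $T_i\times T_j$ into three perfect matchings of the complete bipartite graph on $T_i\cup T_j$ (a Latin‑square decomposition of $K_{3,3}$), and show that the three codegrees $|\hht(P,R)|$ coming from one such perfect matching always sum to at most $4s$. Adding up the three perfect matchings then gives the bound $12s$. (The hypothesis $t+1\le j$ will in fact not be used; the estimate holds for every $1\le i<j\le s$, and that range already forces $s\ge 2$.)

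So the heart of the matter is the following per‑matching claim. Suppose $P_1=\{a_1,b_1\}$, $P_2=\{a_2,b_2\}$, $P_3=\{a_3,b_3\}$ with $\{a_1,a_2,a_3\}=T_i$ and $\{b_1,b_2,b_3\}=T_j$; I want
\[
|\hht(P_1,R)|+|\hht(P_2,R)|+|\hht(P_3,R)|\le 4s .
\]
If one of these sets, say $\hht(P_3,R)$, is empty, the other two contribute at most $2s$ each by Fact \ref{fact-4}, and we are done. Otherwise all three are nonempty, and I claim they admit no system of distinct representatives: if $z_1,z_2,z_3$ were distinct with $z_\ell\in\hht(P_\ell,R)$, then $\{a_\ell,b_\ell,z_\ell\}\in\hht$ for each $\ell$; since $P_1,P_2,P_3$ form a perfect matching the $a_\ell$ are pairwise distinct and the $b_\ell$ are pairwise distinct, while the $z_\ell$ are pairwise distinct and lie outside $R$, so these three edges together with the $s-2$ sets $T_p$, $p\in[s]\setminus\{i,j\}$, would form a matching of size $s+1$ in $\hht$, contradicting $\nu(\hht)=s$. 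Hall's theorem then produces a subfamily of size $2$ or $3$ with deficient union: either two of the three sets coincide and equal a single vertex (so two summands equal $1$ and the third is at most $2s$ by Fact \ref{fact-4}, total $\le 2s+2$), or $|\hht(P_1,R)\cup\hht(P_2,R)\cup\hht(P_3,R)|\le 2$ (so each summand is at most $2$, total $\le 6$). Since $s\ge 2$, both bounds are at most $4s$, which proves the claim.

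The only genuinely delicate point is this per‑matching claim, and within it the bookkeeping after Hall's theorem: one must check that \emph{both} possible outcomes, $2s+2$ and $6$, stay below $4s$, which is precisely where $s\ge 2$ enters (the inequality $2s+2\le 4s$ is tight at $s=1$). Everything else is routine — the perfect‑matching decomposition of $K_{3,3}$ is explicit, Fact \ref{fact-4} supplies the codegree bound $2s$, and ruling out a system of distinct representatives uses the same "extend the fixed matching $T_1,\dots,T_s$" device already exploited in Facts \ref{fact-2}, \ref{fact-general2} and \ref{fact-general3}'s predecessors.
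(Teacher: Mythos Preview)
Your proof is correct and follows essentially the same route as the paper: decompose $T_i\times T_j$ into three perfect matchings of $K_{3,3}$, use $\nu(\hht)=s$ to rule out a system of distinct representatives for $\hht(P_1,R),\hht(P_2,R),\hht(P_3,R)$, apply the Hall trichotomy (one empty / two equal singletons / union of size $\le 2$), and bound each case by $4s$ via Fact~\ref{fact-4}. Your observation that the hypothesis $t+1\le j$ is never used is also correct---the paper's proof does not use it either; one small quibble is that the constraint actually forcing $s\ge 2$ is $6\le 4s$, not $2s+2\le 4s$ (which already holds for $s\ge 1$).
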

\begin{proof}
Let  $P_1,P_2,P_3$ be a matching in $T_i\times T_j$. By $\nu(\hht)\leq s$, there are no $u,v,w\notin R$ such that $P_1\cup \{u\}, P_2\cup \{v\},P_3\cup \{w\}\in \hht$. It follows that either one of $\hht(P_1,R), \hht(P_2,R), \hht(P_3,R)$ is empty or two of them are an identical 1-element set or the union of them has size two. Since $\hht(P_i,R)\leq 2s$ from Fact \ref{fact-4} and $s\geq 2$, we infer that
\[
|\hht(P_1,R)|+|\hht(P_2,R)|+|\hht(P_3,R)|\leq 4s.
\]
Since $T_i\times T_j$ can be partitioned into 3 groups of matchings, the fact follows.
\end{proof}

Define
\begin{align*}
&\hht_1=\left\{T\in \hht\colon |T\cap R|= 1\right\},\\[3pt]
 &\hht_{21}=\left\{T\in \hht\colon \mbox{ there exists }1\leq i\leq s, |T\cap T_i|\geq 2 \right\},\\[3pt]
 & \hht_{22}=\left\{T\in \hht\colon |T\cap R|= 2,\mbox{ there exist }1\leq i<j\leq s, |T\cap T_i|= 1 =|T\cap T_j|\right\},\\[3pt]
 &\hht_{3}=\left\{T\in \hht\colon \mbox{ there exist }1\leq i<j<k\leq s, |T\cap T_i|= 1 =|T\cap T_j|=|T\cap T_k|\right\}.
\end{align*}
By \eqref{ineq-2.6} and \eqref{ineq-2.7},
\begin{align}\label{ineq-general1}
|\hht_{22}|=\sum_{1\leq i<j\leq s}\sum_{P\in T_i\times T_j} |\hht(P,R)| &\leq \binom{t}{2} 10s +t(s-t)12s+\binom{s-t}{2}12s\nonumber\\[3pt]
&=5st^2+12st(s-t)+6s(s-t)^2-6s^2+st.
\end{align}
By Facts \ref{fact-3} and \ref{fact-general2},
\begin{align}\label{ineq-general2}
|\hht_1|=\sum_{x\in R} |\hht(x,R)|\leq t(2s-1)^2+(s-t)\times 3\times 6s=4s^2 t+18s(s-t)-4st+t.
\end{align}
and
\begin{align}\label{ineq-general3}
|\hht_{21}|=\sum_{1\leq i\leq s}\sum_{P\in \binom{T_i}{2}}|\hht(P)| \leq 3s\times3s=9s^2.
\end{align}

For $1\leq i<j<k\leq s$, define
\[
\hht_{ijk} = \left\{T\in \hht\colon |T\cap T_i|=|T\cap T_j|=|T\cap T_k|=1\right\}.
\]

\begin{fact}\label{fact-general4}
If $1\leq i<j<k\leq t$, then $|\hht_{ijk}|\leq 27-8$. If $1\leq i<j\leq t<k\leq s$, then $|\hht_{ijk}|\leq 27-6$.
\end{fact}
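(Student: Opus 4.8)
The plan is to exploit the fact that, for $i \le t$, the link $\hht(x_i,R)$ is both large and of small maximum degree. First I would record the auxiliary bound $\Delta(\hht(x,R)) \le 2s$ for every $x \in R$. Suppose instead some $z \in [n]\setminus R$ had $2s+1$ neighbours $w_1,\dots,w_{2s+1}$ in $\hht(x,R)$, i.e.\ $\{x,z,w_a\} \in \hht$ with $w_a \notin R$. By 2-resilience pick a matching $V_1,\dots,V_s$ in $\hht$ avoiding $\{x,z\}$ and set $R' = V_1\cup\dots\cup V_s \in \hr$, so $|R'| = 3s$ and $R'\cap\{x,z\} = \emptyset$. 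If some $w_a \notin R'$, then $\{x,z,w_a\},V_1,\dots,V_s$ is a matching of size $s+1$, impossible; hence all $w_a$ lie in $R'\setminus R$, forcing $|R\cap R'| \le 3s-(2s+1) < s$ and contradicting that $\hr$ is $s$-intersecting. Since $x_i \in R_1$ gives $|\hht(x_i,R)| > 6s = 3\cdot 2s$, the links $\hht(x_i,R)$ with $i\le t$ satisfy the hypotheses of Fact \ref{fact-general1} (and of Fact \ref{fact-6} with $Q=\emptyset$, $d=2s$).

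For the first assertion, $1\le i<j<k\le t$, apply Fact \ref{fact-general1} to $\hht(x_i,R),\hht(x_j,R),\hht(x_k,R)$ to obtain pairwise disjoint pairs $E_i,E_j,E_k$, each disjoint from $R$. If $\hht_{ijk}$ contained a transversal $\{y_i,y_j,y_k\}$ with $y_i\in T_i\setminus\{x_i\}$, $y_j\in T_j\setminus\{x_j\}$, $y_k\in T_k\setminus\{x_k\}$, then this transversal together with $E_i\cup\{x_i\}$, $E_j\cup\{x_j\}$, $E_k\cup\{x_k\}$ and the $s-3$ edges $T_\ell$ ($\ell\notin\{i,j,k\}$) would form a matching of size $s+1$; disjointness is immediate, using $y_i\ne x_i$, $y_j\ne x_j$, $y_k\ne x_k$, that $E_i,E_j,E_k$ avoid $R$, and that $T_1,\dots,T_s$ is a matching. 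Thus $\hht_{ijk}$ omits all $2^3 = 8$ transversals that avoid $\{x_i,x_j,x_k\}$, so $|\hht_{ijk}|\le 27-8$.

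For the second assertion, $1\le i<j\le t<k\le s$, only $x_i,x_j$ are special, so Fact \ref{fact-6} with $Q=\emptyset$ yields just two disjoint pairs $E_i\in\hht(x_i,R)$, $E_j\in\hht(x_j,R)$, disjoint from $R$. The key claim is that $\hht_{ijk}$ cannot contain two ``rainbow'' transversals $\{y_i,y_j,w\}$ and $\{z_i,z_j,w'\}$ that avoid $x_i,x_j$ and satisfy $y_i\ne z_i$, $y_j\ne z_j$, $w\ne w'$: together with $E_i\cup\{x_i\}$, $E_j\cup\{x_j\}$ and the $T_\ell$ ($\ell\notin\{i,j,k\}$) they would give a matching of size $2+2+(s-3)=s+1$. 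It then suffices to note that among the $2\cdot 2\cdot 3 = 12$ transversals avoiding $\{x_i,x_j\}$, a rainbow-free subfamily $S$ has size at most $6$: indexing these transversals by $(p,q,r)\in\{1,2\}^2\times\{1,2,3\}$ and writing $m_{pq}=|\{r\colon(p,q,r)\in S\}|$, rainbow-freeness forces $m_{11}+m_{22}\le 3$ (if both $(1,1)$ and $(2,2)$ occur they must occur on a single common layer $r$), and likewise $m_{12}+m_{21}\le 3$, so $|S|\le 6$. Since the remaining $27-12=15$ transversals meet $\{x_i,x_j\}$ and carry no constraint, $|\hht_{ijk}|\le 15+6 = 27-6$.

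I expect the only genuinely delicate point to be the degree bound $\Delta(\hht(x,R))\le 2s$: Corollary \ref{cor-1} on its own gives only $\Delta\le 3s$, and then the threshold $|\hht(x_i,R)|>6s$ defining $R_1$ is too weak to feed Fact \ref{fact-general1} in the first case. Everything else is routine disjointness bookkeeping together with the one-line extremal computation on the $2\times 2\times 3$ grid.
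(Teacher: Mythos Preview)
Your proof is correct and follows the paper's approach. The paper leaves implicit both the degree bound needed to invoke Facts~\ref{fact-6}/\ref{fact-general1} (it follows from Fact~\ref{fact-3}: no pseudo sunflower of size $2s$ in a graph forces $\Delta\le 2s-1$) and the justification of ``at least 6 missing'' in the second case; your explicit derivation of $\Delta\le 2s$ and the $2\times2\times3$ grid computation fill these gaps cleanly.
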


\begin{proof}
If $1\leq i<j<k\leq t$, then by Fact \ref{fact-general1} there exist disjoint sets $P_i\in \hht(x_i,R)$, $P_j\in \hht(x_j,R)$  and $P_k\in \hht(x_k,R)$. It implies that $\hht\cap \binom{T_1\cup T_j\cup T_k \setminus \{x_i,x_j,x_k\}}{3}=\emptyset$. Thus $|\hht_{ijk}|\leq 19$.

If $1\leq i<j\leq t<k\leq s$, then by Fact \ref{fact-general1} there exist disjoint sets  $P_i\in \hht(x_i,R)$ and  $P_j\in \hht(x_j,R)$. It follows that  $\hht\cap \binom{T_1\cup T_j\cup T_k \setminus \{x_i,x_j\}}{3}$ is intersecting. Thus at least 6 edges in $\binom{T_1\cup T_j\cup T_k \setminus \{x_i,x_j\}}{3}$ are missing. Therefore $|\hht_{ijk}|\leq 21$.
\end{proof}

By Fact \ref{fact-general4},
\begin{align}\label{ineq-general4}
|\hht_{3}|=\sum_{1\leq i<j<k\leq s} |T_{ijk}| &\leq 27\binom{s}{3}-8\binom{t}{3}-6\binom{t}{2}(s-t)\nonumber\\[3pt]
&=\frac{9}{2}s^3 -3t^2s+\frac{5}{3}t^3 +t^2+3 s t-\frac{27}{2}s^2+9s-\frac{8}{3}t.
\end{align}
Adding \eqref{ineq-general1}, \eqref{ineq-general2}, \eqref{ineq-general3} and \eqref{ineq-general4}, we conclude that
\begin{align*}
|\hht|\leq 5st^2&+12st(s-t)+6s(s-t)^2+4s^2 t+\frac{9}{2}s^3 -3t^2s+\frac{5}{3}t^3\\[3pt]
     &+\frac{15}{2}s^2-18 s t + t^2+9 s-\frac{5}{3}t=:f(t).
\end{align*}
Checking the second derivative $f''(t)$, it follows that $f'(t)$ is minimal for $t=\frac{4s-1}{5}$. For $s\geq 21$,
\[
 f'(\frac{4s-1}{5})=\frac{2}{15} (6 s^2- 123 s -14 )>0.
\]
Consequently,
\[
|\hht| \leq f(s) = \frac{73}{6}s^3-\frac{19}{2}s^2+\frac{22}{3}s <\frac{73}{6}s^3.
\]
For $3\leq s\leq 20$ and $0\leq t\leq s$, one can check by computation that
\[
|\hht| \leq f(t) <\frac{73}{6}s^3+50.
\]

\section{Two special cases of Theorem \ref{thm-main}}

Let $\hht\subset \binom{[n]}{3}$ be a 2-resilient family with matching number 2 and let $\hr=\hht^2$. Let $A,B\in \hht$ be disjoint and let $R=A\cup B$. Define
\[
\hht_0=\{T\in \hht\colon T\cap A\neq \emptyset \neq T\cap B\}.
\]
\begin{lem}
\[
|\hht_0|\leq 36.
\]
\end{lem}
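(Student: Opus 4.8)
The plan is to bound $|\hht_0|$ by splitting $\hht_0$ according to the intersection pattern with $A=\{a_1,a_2,a_3\}$ and $B=\{b_1,b_2,b_3\}$. Every $T\in\hht_0$ meets both $A$ and $B$, so $1\le|T\cap A|\le 2$ and $1\le|T\cap B|\le 2$. Since $|T|=3$, either $|T\cap A|=1,|T\cap B|=2$, or $|T\cap A|=2,|T\cap B|=1$, or ($|T\cap A|=1=|T\cap B|$ and the third vertex lies outside $R$). Write $\hht_0=\ha_2\cup\hb_2\cup\hc$ accordingly, where $\ha_2$ consists of the sets with two vertices in $A$, $\hb_2$ the mirror image, and $\hc$ the sets with exactly one vertex in each of $A,B$ and one vertex outside. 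The first two families are easy: for a fixed pair $P\in\binom{A}{2}$, Fact~\ref{fact-4} (with $s=2$) gives $|\hht(P,R)|\le 2s=4$, and there are $3$ such pairs, so $|\ha_2|\le 12$, and likewise $|\hb_2|\le 12$. It remains to show $|\hc|\le 12$, which would give the clean total $36$.

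For $\hc$, the idea is to regard each $T=\{a_i,b_j,z\}\in\hc$ (with $z\notin R$) as recording the pair $(a_i,b_j)\in A\times B$ together with the "tail" $z$, and to count via the $9$ pairs in $A\times B$. For a pair $P=\{a_i,b_j\}$, let $\hht(P,R)=\{z:\{a_i,b_j,z\}\in\hht\}$; again Fact~\ref{fact-4} gives $|\hht(P,R)|\le 4$, but that only yields $|\hc|\le 36$, far too weak. The key extra structure is that $\nu(\hht)=2$: three sets $\{a_{i_1},b_{j_1},z_1\},\{a_{i_2},b_{j_2},z_2\},\{a_{i_3},b_{j_3},z_3\}$ with distinct $i$'s, distinct $j$'s and distinct $z$'s would form a matching of size $3$. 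So, looking at the bipartite "pattern graph" on $A\sqcup B$ whose edges are the pairs $P$ with $\hht(P,R)\neq\emptyset$, a matching of size $3$ in that graph is almost forbidden — the only obstruction is that the tails $z$ might be forced to collide. I would make this precise by a cross-intersection argument in the spirit of Facts~\ref{fact-6} and \ref{fact-general1}: if the pattern graph contains a perfect matching $\{a_1b_1,a_2b_2,a_3b_3\}$, then the three "tail sets" $\hht(a_1b_1,R),\hht(a_2b_2,R),\hht(a_3b_3,R)$ must be pairwise "entangled" — two of them equal a common singleton, or their union has size $\le 2$ — which by Fact~\ref{fact-4}-type reasoning forces their sizes to sum to at most some small constant. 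Iterating over the (at most $6$) perfect matchings of $K_{3,3}$, or more efficiently analyzing the few possible pattern graphs with no perfect matching (by König, these are covered by $2$ vertices, so the edges lie in $2$ stars, giving $\le 2\cdot 6=12$ via Corollary~\ref{cor-1}) versus those with a perfect matching (where the entanglement cuts the count down), should deliver $|\hc|\le 12$.

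The main obstacle I anticipate is the case where the pattern graph on $A\sqcup B$ does contain a perfect matching: there one cannot simply use a vertex cover, and one must exploit $2$-resilience to produce the disjoint copies $R'\in\hr$ avoiding prescribed vertices and then argue that the surviving tail sets are tiny. Getting the constant exactly right ($12$ rather than, say, $15$) will likely require a short case check on how many of the $9$ pairs can be "full" (size $4$) simultaneously, using that a size-$4$ tail set $\hht(P,R)$ together with $R'\in\hr(\bar P)$ forces $\hht(P,R)\subset R'\setminus R$, hence strong restrictions on which other pairs can also be large. I would organize this as: (i) handle the cover case, (ii) in the perfect-matching case pick $R'\in\hr(\bar a_1,\bar b_1)$ and note $\hht(a_1b_1,R)\subset R'\setminus R$ has size $\le 3$ while the complementary sub-pattern on $\{a_2,a_3\}\times\{b_2,b_3\}$ must itself avoid a cross-matching using new tails, recursing down to a two-by-two situation bounded by Fact~\ref{fact-4.4}-style arguments, and (iii) sum up.
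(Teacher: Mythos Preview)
Your decomposition $\hht_0=\ha_2\cup\hb_2\cup\hc$ is correct, but the numerical targets are wrong, and this breaks the approach.

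First, the bound $|\ha_2|\le 12$ rests on a misreading of Fact~\ref{fact-4}. For $P\in\binom{A}{2}$ the set $\hht(P,R)$ counts edges $P\cup\{z\}$ with $z\notin R$; such edges miss $B$ entirely and so lie outside $\hht_0$. The members of $\ha_2$ are triples $P\cup\{b\}$ with $b\in B$, and the trivial bound is $|\ha_2|\le\binom{3}{2}\cdot 3=9$ (and likewise $|\hb_2|\le 9$).

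Second, and fatally, the target $|\hc|\le 12$ is false. In the extremal configuration $\hht=\binom{[8]}{3}$ with $A=\{1,2,3\}$, $B=\{4,5,6\}$, every pair $P\in A\times B$ has $\hht(P,R)=\{7,8\}$, so $|\hc|=9\cdot 2=18$; together with $|\ha_2|=|\hb_2|=9$ this gives exactly $36$. You would be trying to prove a false inequality.

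Third, even aiming for the corrected target $|\hc|\le 18$, your per-matching analysis only yields $\sum_{P\in M}|\hht(P,R)|\le 8$ (one tail set empty, the other two of size~$4$), hence $|\hc|\le 24$ and merely $|\hht_0|\le 9+9+24=42$. The ingredient you are missing is Corollary~\ref{cor-1}: the bound $|\hht(P)|\le 6$ couples the ``outer'' count $|\hht(P,R)|$ to the ``inner'' count $|\{c\in R\setminus P: P\cup\{c\}\in\hht\}|$, so a pair contributing $4$ to $|\hc|$ can contribute at most $2$ (not $4$) towards $\ha_2\cup\hb_2$. The paper exploits this by \emph{not} separating $\hc$ from $\ha_2\cup\hb_2$: it sets
\[
\omega(P)=|\hht(P,R)|+\tfrac12\,|\{T\subset R: P\subset T\in\hht_0\}|
\]
so that $|\hht_0|=\sum_{P\in A\times B}\omega(P)$, obtains $\omega(P)\le 5$ from $|\hht(P)|\le 6$ and $|\hht(P,R)|\le 4$, and then proves $\omega(P_1)+\omega(P_2)+\omega(P_3)\le 12$ for each perfect matching of $A\times B$ via exactly the overlapping/Hall analysis you sketch. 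Your decoupled bounds cannot recover $36$ without reintroducing this interaction.
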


\begin{proof}
For each $P\in A\times B$ and $P\subset T\in \hht_0$, let $\omega(P,T)=1$ if $|T\cap R|=2$ and  let $\omega(P,T)=\frac{1}{2}$ if $T\subset R$. Define
\[
\omega(P) =\sum_{T\colon P\subset T\in \hht_0} \omega(P,T).
\]
It is easy to check that
\[
|\hht_0| =\sum_{P\in A\times B} \omega(P).
\]
Let $P_1,P_2,P_3$ form a 3-matching of $A\times B$. By Corollary \ref{cor-1} and Fact \ref{fact-4}, we have $|\hht(P_i)|\leq 6$ and $|\hht(P_i,R)|\leq 4$.
Thus,
\begin{align*}
\omega(P_i)= |\hht(P_i,R)|+\frac{1}{2}|\{T\subset R\colon P_i\subset T\in \hht\}|  \leq 4+\frac{2}{2}=5.
\end{align*}

Since there do not exist distinct vertices $\{x\}\in \hht(P_1,R), \{y\}\in \hht(P_2,R), \{z\}\in \hht(P_3,R)$, we infer that either one of them is empty or two of them are an identical 1-element set or the union  of them has size two.  If one of them is empty, then
\begin{align*}
\omega(P_1)+\omega(P_2)+\omega(P_3) \leq 2\times 5+\frac{4}{2}= 12.
\end{align*}
If two of them are an identical 1-element set, then
\begin{align*}
\omega(P_1)+\omega(P_2)+\omega(P_3) \leq 5+2\times 1+2\times \frac{4}{2}= 11.
\end{align*}
If the union  of them has size two, then
\begin{align*}
\omega(P_1)+\omega(P_2)+\omega(P_3) \leq 3\times (2+ \frac{4}{2})= 12.
\end{align*}

Since $A\times B$ can be partitioned into 3 groups of 3-matchings,
\[
|\hht_0| =\sum_{P\in \binom{[6]}{2}} \omega(P) \leq 3\times 12=36.
\]
\end{proof}

Note that even though there is some formal similarity the above statement is completely different from Lemma \ref{lem-main}.

\begin{cor}
If $\hht\subset\binom{[n]}{3}$ satisfies $\nu(\hht)=2$ and $\tau(\hht)=6$, then
\[
|\hht|\leq 56.
\]
\end{cor}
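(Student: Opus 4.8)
The plan is to reduce the corollary to the previously established lemma on $|\hht_0|$. Assume $\hht \subset \binom{[n]}{3}$ with $\nu(\hht) = 2$ and $\tau(\hht) = 6$. Since $\tau(\hht) = 6 > 4 = 2(k-1)$ for $k=3$, the family $\hht$ is in particular $2$-resilient (indeed, omitting any two vertices leaves a family still requiring a transversal of size at least $4$, so matching number cannot drop below $2$; and it cannot exceed $2$ either). Fix any two disjoint edges $A, B \in \hht$, which exist because $\nu(\hht) = 2$, and set $R = A \cup B$, a $6$-set. The key structural fact I would exploit is that $R$ is a transversal of $\hht$: if some $T \in \hht$ were disjoint from $R$, then $A, B, T$ would be a $3$-matching, contradicting $\nu(\hht) = 2$. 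Hence every edge of $\hht$ meets $R$ in at least one vertex.

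Next I would partition $\hht$ according to how an edge $T$ meets the two halves $A$ and $B$. Every $T \in \hht$ satisfies $T \cap A \neq \emptyset$ or $T \cap B \neq \emptyset$. If $T \cap A \neq \emptyset \neq T \cap B$, then $T \in \hht_0$, and the lemma just proved gives $|\hht_0| \le 36$. The remaining edges are those missing one of the two halves entirely; by symmetry consider $T$ with $T \cap B = \emptyset$, so $T \cap A \neq \emptyset$ and $T \setminus A$ lies outside $R$ or straddles — actually since $T \cap B = \emptyset$ and $T$ meets $R$ only in $A$, we have $|T \cap A| \in \{1,2,3\}$. The subfamily of such edges, restricted to how it sits over $A$, is governed by Corollary \ref{cor-1} ($|\hht(P)| \le 3s = 6$ for any pair $P$) and by the pseudo-sunflower bounds: an edge $T$ with $|T\cap A| = 1$, say $T \cap R = \{a\}$, contributes to $\hht(a,R)$, and Fact \ref{fact-4} type reasoning together with $\nu = 2$ bounds these contributions. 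The main point is a counting: the number of edges meeting $A$ but not $B$ is at most $\binom{3}{2}\cdot 6 / (\text{overcount}) + \binom{3}{3} \cdot |\{\text{edges with } T \cap R \subseteq A, |T\cap A|=1\}|$, and one shows this plus the symmetric count for $B$ contributes at most $20$, yielding $56$ in total.

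More carefully, I would organize the edges not in $\hht_0$ as: (i) edges with $|T \cap A| = 3$ (at most $1$, namely $A$ itself) or $|T\cap B|=3$ (at most $1$, namely $B$); (ii) edges with $|T \cap A| = 2$ and $T \not\subseteq R$, and symmetrically for $B$; (iii) edges with $|T \cap R| = 1$. For (ii): each pair $P \in \binom{A}{2}$ has $|\hht(P)| \le 6$ by Corollary \ref{cor-1}, and since $\nu(\hht)=2$ forces the three pairs inside $A$ to interact (no three disjoint completions, but here the $\hht(P)$ for $P \subset A$ share the vertex outside... one must use that $B$ is disjoint from all of them), one gets a bound well below $3 \times 6$; I expect the clean bound is that edges of types (ii)+(iii) over $A$ plus those over $B$ total at most $20$. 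For (iii): if $T \cap R = \{a\}$ with $a \in A$, then $T \in \hht(a, R)$ and by $\nu(\hht) = 2$ and Fact \ref{fact-2}, $\hht(a,R)$ is small; moreover any such $T$ together with $B$ already gives a matching of size $2$ using only one vertex of $A$, which heavily constrains the rest. Summing the pieces gives $|\hht| \le 36 + 20 = 56$, with equality when $\hht = \binom{[8]}{3}$.

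The main obstacle I anticipate is bounding the edges outside $\hht_0$ — those touching only one of $A$, $B$ — sharply enough to reach exactly $20$ rather than a weaker bound; the crude estimates from Corollary \ref{cor-1} and Fact \ref{fact-4} alone give roughly $3 \cdot 6 + 3 \cdot 6 = 36$ on each side, far too much, so one genuinely needs the interaction forced by $\nu(\hht) = 2$: the presence of the disjoint edge on the "other side" ($B$ when analyzing edges over $A$) means an edge meeting $A$ in a single vertex extends to a $2$-matching using almost none of $A$, which should collapse the count dramatically, and pinning down that collapse is the crux.
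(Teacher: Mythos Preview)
Your partition is exactly the right one, and you correctly invoke the lemma to get $|\hht_0|\le 36$. The gap is in the second half: you never find a clean bound of $20$ for the edges outside $\hht_0$, and the degree-type estimates you reach for (Corollary~\ref{cor-1}, Fact~\ref{fact-4}) are far too weak by themselves, as you yourself note. The missing idea is that the hypothesis $\tau(\hht)=6$ should be used a second time, not just to establish $2$-resilience of $\hht$.

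The paper's argument for this part is almost immediate. The edges outside $\hht_0$ are exactly $\hht(\bar A)\cup\hht(\bar B)$. By $\nu(\hht)=2$, each of $\hht(\bar A)$ and $\hht(\bar B)$ is an intersecting $3$-graph. Now use $\tau(\hht)=6$: if $\hht(\bar A)$ had a transversal $\{x,y\}$ of size~$2$, then $A\cup\{x,y\}$ would be a $5$-element transversal of $\hht$, contradiction. Hence $\tau(\hht(\bar A))=3$, i.e.\ $\hht(\bar A)$ is a $2$-resilient intersecting $3$-graph, and the Erd\H os--Lov\'asz bound $m(3,1)=10$ (the case $s=1$ of \eqref{ineq-1.1}) gives $|\hht(\bar A)|\le 10$. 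Likewise $|\hht(\bar B)|\le 10$, and $|\hht|\le 36+10+10=56$.

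So your counting-by-$|T\cap A|$ plan is not needed at all; once you recognise $\hht(\bar A)$ as an intersecting family with maximal covering number, the bound is a direct quotation of the known $s=1$ case.
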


\begin{proof}
Let $A$ and $B$ be two disjoint edges in $\hht$. Then $\hht=\hht_0\cup \hht(\bar{A})\cup \hht(\bar{B})$. By $\nu(\hht)=2$ both $\hht(\bar{A})$ and $\hht(\bar{B})$ are intersecting. By $\tau(\hht)=6$ they both have transversal number 3. That is, they are 2-resilient. By the case $s=1$ of  \eqref{ineq-1.1}, $|\hht(\bar{A})|+|\hht(\bar{B})|\leq 10+10=20$. Thus $|\hht|\leq 10+10+36=56$.
\end{proof}

\begin{prop}
Let $\hht\subset \binom{[n]}{3}$ be a 2-resilient family with matching number 2. If there exist disjoint sets $A,B\in \hht$  such that both $\hht(\bar{A})$ and $\hht(\bar{B})$ are stars, then
\[
|\hht| \leq 56.
\]
\end{prop}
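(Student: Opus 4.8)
The setting is: $\hht\subset\binom{[n]}{3}$, 2-resilient, $\nu(\hht)=2$, and there are disjoint $A,B\in\hht$ so that $\hht(\bar A)$ and $\hht(\bar B)$ are stars, say with respective centers $a$ and $b$. The first thing I would do is decompose $\hht=\hht_0\cup\hht(\bar A)\cup\hht(\bar B)$ exactly as in the Corollary above, where $\hht_0=\{T\in\hht: T\cap A\neq\emptyset\neq T\cap B\}$. The Lemma just proved gives $|\hht_0|\le 36$, so it suffices to show $|\hht(\bar A)|+|\hht(\bar B)|\le 20$; but a star on $n$ vertices can have up to $\binom{n-1}{2}$ edges, so the bound $|\hht(\bar A)|,|\hht(\bar B)|\le 10$ fails in general and we must instead exploit the interaction between the two stars and with $\hht_0$. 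The key structural leverage is that since $A,B$ is a maximal matching and $\hht(\bar A)$ is intersecting with center $a$ while $\hht(\bar B)$ is intersecting with center $b$, any $E\in\hht(\bar A)$ and $F\in\hht(\bar B)$ must intersect (else $E,F$ is a matching of size $2$ avoiding something — more precisely if $E\cap F=\emptyset$ then $a\notin F$, $b\notin E$ and one checks $E,F$ extends or contradicts resilience); so $\hht(\bar A)$ and $\hht(\bar B)$ are cross-intersecting stars. Also note $a\notin A$ and $b\notin B$ by definition of $\hht(\bar A),\hht(\bar B)$, and resilience applied to $T=\{a\}$ or $T=\{b\}$ forces $a$ and $b$ to not be ``too powerful'' — in particular there must be a matching of size $2$ in $\hht(\bar a)$, which constrains where the star edges can live.

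\textbf{Main steps.} First I would pin down the location of $a$ and $b$: using 2-resilience with the singleton $\{a\}$, the family $\hht(\bar a)$ still has matching number $2$, so there is a matching $A',B'$ in $\hht$ avoiding $a$; combined with the fact that $\hht(\bar A)$ is a star at $a$, every edge of $\hht(\bar A)$ meets $A'\cup B'$ in a controlled way. Second, I would use cross-intersection of the two stars: if $a\neq b$ and $a\notin\{$ some edge $F\in\hht(\bar B)\}$ then $F\ni$ nothing forcing... actually the cleanest route: let $D=\hht(a,\{a\})=\hht(\bar A)$ viewed as a 2-graph $\hg_a$ on $[n]\setminus a$, similarly $\hg_b$ on $[n]\setminus b$; cross-intersecting means every edge of $\hg_a$ meets every edge of $\hg_b$ (as sets $\{a\}\cup e$ and $\{b\}\cup f$ intersect). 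If $a=b$ this is automatic and then $\hht(\bar A)\cup\hht(\bar B)$ is a single star, so reduces to: a star with matching number $\le 2$ living in a 2-resilient host — here I would bound it directly by $\binom{5}{2}=10$ using that $\hg_a$ as a 2-graph has $\nu\le 2$ hence at most... no, a star-graph $\hg_a$ (2-uniform) trivially has $\nu(\hg_a)\le\nu(\hht)=2$? Not quite — I must instead bound $|\hg_a|$ by observing $\{a\}\cup e\in\hht$ for $e\in\hg_a$ and using a pseudo-sunflower argument (Corollary \ref{cor-1} gives $|\hht(P)|\le 3s=6$ for pairs $P$, but that is per pair). The honest plan: split into the cases $a=b$ and $a\neq b$.

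\textbf{Case $a=b$.} Then $\hht(\bar A)$ and $\hht(\bar B)$ are both the same star at $a$, i.e.\ $\hht(\bar A)=\hht(\bar B)=\{T\in\hht: a\in T, T\cap A=\emptyset\}\cap\{T: T\cap B=\emptyset\}$ — wait, more carefully $\hht(\bar A)=\{T: T\cap A=\emptyset\}$ and this equals a star at $a$, and likewise $\{T:T\cap B=\emptyset\}$ is a star at $a$; their union $\hht(\bar A)\cup\hht(\bar B)$ consists of edges through $a$ missing $A$ or missing $B$. I would show this union has size $\le 20$ by: for the 2-graph $\hg = \{T\setminus\{a\}: a\in T\in\hht(\bar A)\cup\hht(\bar B)\}$, every edge lies in $([n]\setminus(A\cup B))\cup$ (one of $A,B$ but not both); using that $\{a\}\cup e$ and $\{a\}\cup f$ with $e\cap f=\emptyset$ together with the maximal matching $A,B$ — if $e,f,g$ pairwise disjoint then... one uses Fact \ref{fact-2} style sunflower bounds. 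Frankly the cleanest is: $\hg$ has $\nu(\hg)\le 2$ (two disjoint edges $e,f$ give matching $\{a\}\cup e$... no, those share $a$). Let me just assert the plan is to bound $\hg$ via: it contains no pseudo-sunflower of size $7$ in the 3-graph sense, reducing by Fact \ref{fact-4.4}-type analysis of the link graph, yielding $|\hht(\bar A)\cup\hht(\bar B)|\le 20$, hence $|\hht|\le 36+20=56$.

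\textbf{Case $a\neq b$.} Here the two stars are cross-intersecting 2-graphs $\hg_a,\hg_b$ on nearly-disjoint vertex sets. Cross-intersecting plus each having bounded matching number (each has at most a few disjoint edges, since $\{a\}\cup e_1,\{a\}\cup e_2$ disjoint is impossible but $\{a\}\cup e_1, \{b\}\cup f_1$ disjoint is impossible by cross-intersection so a $2$-matching inside $\hg_a$ together with anything in $\hg_b$ would overreach) forces $|\hg_a|+|\hg_b|$ small via a Hilton–Milner/Frankl-type cross-intersecting estimate. The decisive sub-step: if $\hg_a$ is itself intersecting (as a 2-graph it is a triangle or a subgraph of a $C_4$ or a star by Fact \ref{fact-4.4}-adjacent reasoning, but actually $\hg_a$ need not avoid pseudo-sunflowers), I bound $|\hg_a|\le$ something; if $\hg_a$ has two disjoint edges $e_1,e_2$ then cross-intersection pins $\hg_b\subseteq$ (edges meeting both $e_1$ and $e_2$, so $\hg_b\subseteq e_1\times e_2$ essentially), giving $|\hg_b|\le 4$, and then one bounds $|\hg_a|$ separately using $\nu(\hht)=2$. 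The main obstacle, as usual in these problems, is the bookkeeping in Case $a\neq b$ when neither star is intersecting: I expect to need a careful case split on $|\hg_a\cap\binom{A}{?}|$ etc.\ and on whether $a\in B$ or $b\in A$, and to combine it with the count $|\hht_0|\le 36$ — in the extremal configuration $\binom{[8]}{3}$ one has $a,b$ inside the common $8$-set and the stars and $\hht_0$ all pack tightly, so the inequality $|\hht(\bar A)|+|\hht(\bar B)|\le 20$ must be tight there, which tells me the right invariant to track is the number of vertices outside $A\cup B$ used by the two stars. I would organize the final count as: (i) reduce to bounding the union of the two stars by $20$; (ii) dispose of $a=b$; (iii) in $a\neq b$, split on the matching number of $\hg_a$ (and symmetrically $\hg_b$) as a 2-graph, using cross-intersection to bound whichever one has a 2-matching and Fact \ref{fact-4.4}/Corollary \ref{cor-1} to bound the intersecting ones, checking the handful of resulting numerical cases all give $\le 20$.
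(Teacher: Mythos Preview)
Your decomposition is different from the paper's, and the route you sketch does not close. Two concrete problems:

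\textbf{(1) The cross-intersecting claim is false.} You assert that any $E\in\hht(\bar A)$ and $F\in\hht(\bar B)$ must intersect. But $B\in\hht(\bar A)$ and $A\in\hht(\bar B)$ are disjoint by hypothesis, so the two stars are \emph{never} cross-intersecting. Two disjoint edges are perfectly compatible with $\nu(\hht)=2$, and resilience gives you nothing here. This undercuts your entire Case $a\neq b$ analysis, which was supposed to do the heavy lifting.

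\textbf{(2) You miss that the centers lie in $A$ and $B$.} You note $a\notin A$, $b\notin B$, but the stronger fact is $a\in B$ and $b\in A$: since $B\in\hht(\bar A)$ and $\hht(\bar A)$ is a star with center $a$, we have $a\in B$; symmetrically $b\in A$. In particular $a\neq b$ always, so your Case $a=b$ is vacuous.

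The paper exploits (2) in a way that sidesteps your difficulty entirely. Writing $A=\{x,y,z\}$, $B=\{u,v,w\}$ with centers $u\in B$ and $x\in A$, it applies Lemma~\ref{lem-main} (the hard $|\hht_{uv}|\le 36$ lemma, not the easy $|\hht_0|\le 36$ lemma you invoke) to the pair of \emph{centers} $\{x,u\}$, giving $|\hht_{xu}|\le 36$. For the complement $\hht(\bar x,\bar u)$: any $T\in\hht$ avoiding $u$ is not in the star $\hht(\bar A)$, hence $T\cap A\neq\emptyset$, and since $x\notin T$ this forces $T\cap\{y,z\}\neq\emptyset$; symmetrically $T\cap\{v,w\}\neq\emptyset$. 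So every such $T$ contains a pair $P\in\{y,z\}\times\{v,w\}$, and Fact~\ref{fact-4} gives $|\hht(P,R)|\le 4$ for each of the four pairs, plus at most $\binom{4}{3}=4$ triples inside $\{y,z,v,w\}$, for a total of $4+4\cdot 4=20$. Thus $|\hht|\le 36+20=56$.

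The moral: your split $\hht_0$ versus $\hht(\bar A)\cup\hht(\bar B)$ puts the hard part (the two stars, which can individually be large) on the side where you have weak tools. The paper's split $\hht_{xu}$ versus $\hht(\bar x,\bar u)$ puts the hard part where Lemma~\ref{lem-main} applies, and the star hypothesis then makes the complement trivially small.
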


\begin{proof}
Let $A=(x,y,z)$, $B=(u,v,w)$ and $R=A\cup B$. Assume that  $\hht(\bar{A})$ is a star with center $u$ and  $\hht(\bar{B})$ is a star with center $x$.
By Lemma \ref{lem-main}, we infer that $|\hht_{xu}|\leq 36$. We are left to show that $|\hht(\bar{x},\bar{u})|\leq 20$.

Let $\hht_0=\{T\in \hht\colon T\cap \{x,u\}=\emptyset\}$. Since $T\cap (y,z)\neq \emptyset\neq T\cap (v,w)$ for every $T\in \hht_0$, we infer that
\[
|\hht(\bar{x},\bar{u})|= |\hht_0| =\left|\binom{\{y,z,v,w\}}{3}\right|+\sum_{P\in (y,z)\times (v,w) } |\hht_0(P, R)|.
\]
By Fact \ref{fact-4}, $|\hht_0(P, A\cup B)|\leq 4$. Then
\[
|\hht_0|\leq 4+4\times 4=20
\]
 and the proposition follows.
\end{proof}

\section{The proof of Theorem \ref{thm-main} in the case $\hr=\hht^2$ is  3-intersecting}

Throughout this section we assume that $\hht$ is a 2-resilient 3-graph with $\nu(\hht)=2$, $[6]\in \hr=\hht^2$ and  $\hr$ is 3-intersecting. Let us partition $\hht$ into three parts
\[
\hht_i = \left\{T\in \hht\colon |T\cap [6]|=i\right\}, i=1,2,3.
\]

\begin{fact}\label{fact-4.3}
Suppose that $\hr$ is 3-intersecting and $R\in \hr$. If $x,y\in R$ then $\hht(x,R)$ and $\hht(y,R)$ are cross-intersecting.
\end{fact}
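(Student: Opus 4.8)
The statement to prove: if $\hr=\hht^2$ is $3$-intersecting, $R\in\hr$, and $x,y\in R$, then the graphs $\hht(x,R)$ and $\hht(y,R)$ are cross-intersecting, i.e. for every $E\in\hht(x,R)$ and $E'\in\hht(y,R)$ we have $E\cap E'\neq\emptyset$.

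The plan is a direct proof by contradiction. Suppose $E\in\hht(x,R)$ and $E'\in\hht(y,R)$ are disjoint. Then $F:=E\cup\{x\}$ and $F':=E'\cup\{y\}$ are edges of $\hht$; note $F\cap R=\{x\}$ and $F'\cap R=\{y\}$, so $F\cap F'=\emptyset$ as long as $x\neq y$ and $E,E'$ miss the relevant points — which holds since $E,E'$ are disjoint from $R$ and from each other, and $x\notin E'$, $y\notin E$ because $E,E'\subset[n]\setminus R$. (If $x=y$ there is nothing to prove, since then $E,E'$ would give a matching of size two inside a single edge's completion, contradicting $\nu(\hht)=2$ directly; more simply the cross-intersecting claim is about the case $x\ne y$, and for $x=y$ the two graphs coincide and $\Delta\le 2$ considerations are not needed here.) So $\{F,F'\}$ is a matching of size $2$ in $\hht$, hence $F\cup F'\in\hr$.

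Now I would exploit $3$-intersection. Write $R=A\cup B$ with $A,B\in\hht$ a matching realizing $R$. We have $F\cap R=\{x\}$ and $F'\cap R=\{y\}$, so $(F\cup F')\cap R=\{x,y\}$, which has size at most $2$. This contradicts the assumption that $\hr$ is $3$-intersecting, because both $R$ and $F\cup F'$ lie in $\hr$ and must therefore share at least $3$ vertices. That completes the argument.

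The only genuinely delicate point — and the step I would write out carefully — is verifying that $F$ and $F'$ really are disjoint, i.e. that $x\notin F'$ and $y\notin F$: this uses that $E=F\setminus\{x\}$ and $E'=F'\setminus\{y\}$ are subsets of $[n]\setminus R$ (by definition of $\hht(x,R)$ and $\hht(y,R)$), together with $x,y\in R$, so $x\notin E'$ and $y\notin E$; combined with $E\cap E'=\emptyset$ and $x\neq y$ this gives $F\cap F'=\emptyset$. Everything else is immediate from the definitions of $\hr=\hht^2$ and of "$3$-intersecting," so no real obstacle arises.
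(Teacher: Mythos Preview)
Your argument is correct and is exactly the one-line ``direct consequence of the $3$-intersecting property of $\hr$'' that the paper invokes: disjoint $E\in\hht(x,R)$, $E'\in\hht(y,R)$ yield a matching $E\cup\{x\},\,E'\cup\{y\}$ in $\hht$ whose union meets $R$ only in $\{x,y\}$, contradicting $3$-intersection. One small remark: your parenthetical treatment of the case $x=y$ is muddled (if $x=y$ the two edges $E\cup\{x\}$ and $E'\cup\{x\}$ share $x$ and do not form a matching, so $\nu(\hht)=2$ is not violated); the statement is simply meant for distinct $x,y$, as you also note.
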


\begin{proof}
It is a direct consequence of the 3-intersecting property of $\hr$.
\end{proof}

Let $\hf_1,\hf_2,\ldots,\hf_s\subset 2^{[n]}$. We say that $\hf_1,\hf_2,\ldots,\hf_s$ are {\it overlapping} if there do not exist pairwise disjoint sets $F_1\in \hf_1, F_2\in\hf_2,\ldots,F_s\in \hf_s$.

Note that the integer 6 has three partitions into three positive integers, namely $6=2+2+2$, $6=1+2+3$ and $6=1+1+4$.

\begin{fact}\label{fact-4.1}
Suppose that the non-empty sets $B_1,B_2,B_3$ ($|B_1|\leq |B_2|\leq |B_3|$) partition $[6]$, then (i) and (ii) hold.
\begin{itemize}
  \item[(i)] If $|B_3|\geq 3$ and $\binom{B_3}{3}\cap \hht\neq \emptyset$, then $\hht(B_1,[6])$ and $\hht(B_2,[6])$ are cross-intersecting.
  \item[(ii)] If $|B_3|\leq 3$ then $\hht(B_1,[6])$, $\hht(B_2,[6])$ and  $\hht(B_3,[6])$ are overlapping.
\end{itemize}
\end{fact}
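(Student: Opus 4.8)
The key principle in both parts is the same: whenever we can pick pairwise disjoint edges of $\hht$ — one from each of several of the ``link'' families $\hht(B_i,[6])$, together possibly with an edge sitting inside one of the blocks — we violate $\nu(\hht)=2$. The plan is to spell this out for the two cases.

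\textbf{Part (i).} Assume $|B_3|\ge 3$ and that there is an edge $E\subset B_3$ with $E\in\hht$. Since $|B_1|+|B_2|\le 3$, the only possibility compatible with all blocks being nonempty is $|B_1|=1$, $|B_2|=2$ (the partition $6=1+2+3$), so $E=B_3$. Suppose for contradiction that $\hht(B_1,[6])$ and $\hht(B_2,[6])$ are \emph{not} cross-intersecting: pick $F_1\in\hht(B_1,[6])$, $F_2\in\hht(B_2,[6])$ with $F_1\cap F_2=\emptyset$. Then $F_1\cup B_1$ and $F_2\cup B_2$ are edges of $\hht$ meeting $[6]$ only in $B_1$ and $B_2$ respectively, hence disjoint from each other and from $B_3=E$. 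So $E,\ F_1\cup B_1,\ F_2\cup B_2$ is a $3$-matching in $\hht$, contradicting $\nu(\hht)=2$.

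\textbf{Part (ii).} Assume $|B_3|\le 3$; since $|B_1|\le|B_2|\le|B_3|$ and the blocks partition $[6]$, this forces $|B_1|=|B_2|=|B_3|=2$, i.e.\ the partition $6=2+2+2$. First note each $\hht(B_i,[6])$ is nonempty: by $2$-resilience of $\hht$ we have $\nu(\hht(\overline{B_i}))=2$, so there is an edge of $\hht$ disjoint from $B_i$; but $[6]\in\hr=\hht^2$ is the union of a matching $A\cup B$, and any edge of $\hht$ disjoint from $B_i$ together with the two blocks is constrained — more directly, apply the argument showing each block must be hit. (I would in fact deduce nonemptiness from the hypothesis via the $3$-intersecting set-up of the section, or simply include it as part of what is being claimed.) Now suppose $\hht(B_1,[6])$, $\hht(B_2,[6])$, $\hht(B_3,[6])$ are \emph{not} overlapping: there are pairwise disjoint $F_1,F_2,F_3$ with $F_i\in\hht(B_i,[6])$. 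Then $F_1\cup B_1$, $F_2\cup B_2$, $F_3\cup B_3$ are edges of $\hht$ pairwise disjoint (they meet $[6]$ only in their own block, and the $F_i$ are disjoint), giving a $3$-matching in $\hht$ and contradicting $\nu(\hht)=2$.

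\textbf{Main obstacle.} The combinatorics of the matching argument is routine; the only genuinely delicate point is the implicit nonemptiness of the relevant link families in part (ii) (and checking that the block-size constraints leave exactly the stated partitions), which I would handle by invoking $2$-resilience of $\hht$ together with the standing assumptions of the section — in particular that $[6]\in\hr$ decomposes as a matching $A\cup B$ of $\hht$. Once nonemptiness is in hand, both parts follow immediately by exhibiting the forbidden $3$-matching.
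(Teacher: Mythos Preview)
Your core argument is correct and matches the paper's one-line proof (``both statements are immediate consequences of $\nu(\hht)=2$''): a failure of cross-intersecting or overlapping immediately yields three pairwise disjoint edges of $\hht$. There are, however, two slips worth fixing.

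First, your enumeration of the admissible partitions is incomplete. In part~(i), the condition $|B_3|\ge 3$ with nonempty blocks allows both $6=1+2+3$ and $6=1+1+4$; you treat only the former and conclude $E=B_3$, which need not hold when $|B_3|=4$. In part~(ii), $|B_3|\le 3$ allows both $6=2+2+2$ and $6=1+2+3$; you treat only the former. In every omitted case the same argument goes through verbatim (pick $E\in\binom{B_3}{3}\cap\hht$ in~(i), and note that $B_i\cup F_i$ are still pairwise disjoint edges in~(ii)), so this is cosmetic, but the case analysis as written is wrong.

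Second, the issue you flag as the ``main obstacle'' --- nonemptiness of the $\hht(B_i,[6])$ in part~(ii) --- is a red herring. Reread the definition of \emph{overlapping} given just before the Fact: $\hf_1,\ldots,\hf_s$ are overlapping if there do \emph{not} exist pairwise disjoint $F_i\in\hf_i$. If any $\hf_i$ is empty, this holds vacuously. So no nonemptiness argument is needed, and the appeal to $2$-resilience and $[6]\in\hr$ can be dropped entirely.
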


\begin{proof}
 Both statements are immediate consequences of $\nu(\hht)=2$.
\end{proof}

\begin{cor}\label{cor-4.2}
If in Fact \ref{fact-4.1} $|B_1|=|B_2|=|B_3|=2$ and $|\hht(B_1,[6])|\leq |\hht(B_2,[6])|\leq |\hht(B_3,[6])|$, then either $\hht(B_1,[6])=\emptyset$, or $|\hht(B_1,[6])\cup \hht(B_2,[6])|= 1$ or $|\hht(B_1,[6])\cup \hht(B_2,[6])\cup \hht(B_3,[6])|= 2$.
\end{cor}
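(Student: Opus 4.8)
The plan is to deduce Corollary \ref{cor-4.2} from part (ii) of Fact \ref{fact-4.1} applied to the balanced partition $|B_1|=|B_2|=|B_3|=2$. Since $|B_3|=2\le 3$, Fact \ref{fact-4.1}(ii) tells us that the three families $\hht(B_1,[6])$, $\hht(B_2,[6])$, $\hht(B_3,[6])$ are overlapping; that is, one cannot pick a single vertex from each of the three (disjoint) complements $[6]\setminus(B_1\cup B_2)=B_3$, etc., forming a $3$-matching with $B_1,B_2,B_3$ — more precisely, there are no distinct $x\in\hht(B_1,[6])$, $y\in\hht(B_2,[6])$, $z\in\hht(B_3,[6])$ that are pairwise distinct as singletons. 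Note each $\hht(B_i,[6])$ is a set of singletons (since $T\cap[6]=B_i$ forces $|T\setminus B_i|=1$), so I will identify $\hht(B_i,[6])$ with a subset $X_i\subseteq[n]$.

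First I would restate the overlapping condition concretely: there do not exist $a\in X_1$, $b\in X_2$, $c\in X_3$ with $a,b,c$ pairwise distinct. (If they were pairwise distinct, then $B_1\cup\{a\}$, $B_2\cup\{b\}$, $B_3\cup\{c\}$ would be a $3$-matching in $\hht$, contradicting $\nu(\hht)=2$, since the $B_i$ are already pairwise disjoint subsets of $[6]$ and $a,b,c\notin[6]$.) Then I would carry out the elementary combinatorial case analysis on the sizes $|X_1|\le|X_2|\le|X_3|$. If $X_1=\emptyset$ we are in the first alternative. Otherwise $X_1\ne\emptyset$; if moreover $X_2\ne\emptyset$ and $|X_1\cup X_2|\ge 2$, pick distinct $a\in X_1$, $b\in X_2$; then every $c\in X_3$ must coincide with $a$ or $b$, so $X_3\subseteq\{a,b\}$ and $|X_1\cup X_2\cup X_3|=|X_1\cup X_2|=2$ (one checks $X_1\cup X_2$ cannot have size $\ge 3$ by the same token, swapping roles). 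The remaining possibility is $|X_1\cup X_2|\le 1$, i.e. $|X_1\cup X_2|=1$ (since $X_1\ne\emptyset$), which is the middle alternative.

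The only mildly delicate point — and the one I would write out carefully — is checking that when $X_1,X_2\ne\emptyset$ the union $X_1\cup X_2$ cannot have size $3$ or more: if $|X_1\cup X_2|\ge 3$ then, since $|X_3|\ge|X_1|\ge 1$, I can choose $a\in X_1$, $b\in X_2$, $c\in X_3$ pairwise distinct (there is enough room: fix any $c\in X_3$, then among the $\ge 3$ elements of $X_1\cup X_2$ find $a\in X_1\setminus\{c\}$ and $b\in X_2\setminus\{a,c\}$, possible because $|X_1\cup X_2|\ge 3$), contradicting overlapping. Hence $|X_1\cup X_2|\le 2$, and combined with the previous paragraph this exhausts all cases and gives exactly the three stated alternatives. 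I expect no real obstacle here — this is a short bookkeeping argument — so the main care is simply in handling the degenerate small-cardinality subcases without gaps.
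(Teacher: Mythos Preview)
Your approach is correct and matches the paper's: invoke Fact~\ref{fact-4.1}(ii) to see that the three families are overlapping (equivalently, admit no system of distinct representatives), and then extract the stated trichotomy. The paper does this in a single line by citing the K\"{o}nig--Hall Theorem, whereas you essentially reprove the needed instance of Hall's condition by hand (with only a minor imprecision in the ``fix any $c\in X_3$'' step, where $c$ and then $a$ must be chosen with a little care in the edge case $|X_1|=1$).
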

\begin{proof}
By Fact \ref{fact-4.1} (ii), the statement follows from the K\"{o}nig-Hall Theorem.
\end{proof}

Let us note that Fact \ref{fact-4} and Corollary \ref{cor-4.2} imply
\begin{align}\label{ineq-4.1}
\sum_{1\leq i\leq 3} |\hht(B_i,[6])|\leq 6 \mbox{ for } |B_1|=|B_2|=|B_3|=2.
\end{align}
Moreover the equality is possible only if $|\hht(B_2,[6])|\geq 2$.

Since $\binom{[6]}{2}$ can be partitioned into five matchings, \eqref{ineq-4.1} implies
\begin{align}\label{ineq-4.2}
|\hht_2| \leq 5\times 6 =30.
\end{align}
Note that $|\hht_3| \leq \binom{6}{3} =20$. Hence $|\hht_1|\leq 6$ would imply $|\hht|\leq 56$.

In the sequel we tacitly assume $|\hht_1|\geq 7$. Let us solve the case when for some $x\in [6]$, $\hht(x,[6])$ is a $C_4$.  For definiteness assume
\[
\hht(1,[6]) =\{7,8\}\times \{9,10\}.
\]
In view of Fact \ref{fact-4.3} (i), $\hht(y,[6]) \subset \{(7,8),(9,10)\}$ for $2\leq y\leq 6$. If for some $y$ we have equality then again by Fact \ref{fact-4.3} (ii), $\hf(z,[6])=\emptyset$ for the remaining four values of $z\in [6]$. Hence $|\hht_1|=4+2=6$ and $|\hht|\leq 56$ follows.  Suppose $|\hht_1|\geq 7$. By symmetry and Fact \ref{fact-4.3} (i) we may assume that for some $\ell$, $4\leq \ell\leq 6$, $\hht(y,[6])=\{(7,8)\}$ for $2\leq y\leq \ell$ and $\hht(\{y\},[6])=\emptyset$ for $\ell<y\leq 6$.

Note that this readily implies
\[
|\hht_1| \leq 4+5\times 1 = 9 \mbox{ and  thereby }|\hht|\leq 59.
\]
Assuming $|\hht|\geq 57$, $|\hht_2|+|\hht_3|\geq 48$ and $|\hht_3|\geq 18$ follow.

Note that if $B_2\cup B_3=[2,6]$, $|B_2|=2$, $|B_3|=3$ and $B_2\cap B_3=\emptyset$, then Fact \ref{fact-4.1} (ii) implies that either $\hht(B_2,[6])=\emptyset$ or $B_3\notin \hht_3$. By $|\hht_3|\geq \binom{6}{3}-2$, at least eight of the ten triples in $\binom{[2,6]}{3}$ belong to $\hht_3$. Hence $\hht(B_2,[6])=\emptyset$ for at least eight choices of $B_2$. Thus,
\[
|\hht_2|\leq \left(\binom{6}{2}-8\right)\times 3=21 \mbox{ and }|\hht|\leq 9+21+20=50.
\]

This concludes the proof in the case if for some $x$,  $\hht(x,[6])$ is a $C_4$. From now on we may assume $|\hht(x,[6])|\leq 3$ for all  $x\in [6]$.

\begin{cor}\label{cor-4.5}
Let $B_1\cup B_2\cup B_3=[6]$  with $|B_i|=i$, $1\leq i\leq 3$. If   $B_3\in \hht$ then
\begin{align}\label{ineq-4.3}
|\hht(B_1,[6])|+|\hht(B_2,[6])|\leq 3.
\end{align}
\end{cor}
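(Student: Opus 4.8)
The plan is to read the conclusion \eqref{ineq-4.3} as a statement about two link families and to use the extra structure forced by $B_3\in\hht$. Since $B_1\cup B_2\cup B_3=[6]$ with $|B_i|=i$, the members of $\hht(B_1,[6])$ are $2$-subsets of $[n]\setminus[6]$, so $\hht(B_1,[6])$ is an ordinary graph, while the members of $\hht(B_2,[6])$ are singletons in $[n]\setminus[6]$. Because $B_3\in\hht$ and $|B_3|=3$, Fact~\ref{fact-4.1} (i) shows that $\hht(B_1,[6])$ and $\hht(B_2,[6])$ are cross-intersecting. Let $Z\subseteq[n]\setminus[6]$ be the set of $z$ with $\{z\}\in\hht(B_2,[6])$, and set $a=|Z|=|\hht(B_2,[6])|$; then cross-intersectingness says precisely that every edge of $\hht(B_1,[6])$ contains all of $Z$.

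Next I would split into three cases according to $a$. If $a=0$, then $|\hht(B_1,[6])|=|\hht(x,[6])|\le 3$ (where $\{x\}=B_1$) by the standing assumption $|\hht(x,[6])|\le 3$ made just before the corollary, and \eqref{ineq-4.3} follows. If $a\ge 2$, a $2$-element edge cannot contain all of $Z$ unless $|Z|=2$ and that edge equals $Z$, so $|\hht(B_1,[6])|\le 1$ when $a=2$ and $\hht(B_1,[6])=\emptyset$ when $a\ge 3$; since $\hr$ is $3$-intersecting, the ``moreover'' clause of Fact~\ref{fact-4} (with $s=2$) gives $a=|\hht(B_2,[6])|\le 2s-1=3$, and in both subcases the sum is at most $3$.

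The remaining case $a=1$ is the heart of the matter. Writing $Z=\{z\}$, every edge of $\hht(B_1,[6])$ passes through $z$, so $\hht(B_1,[6])$ is a star centered at $z$, and it suffices to prove that it has at most two edges. For this I would invoke Fact~\ref{fact-3}: since $\hr$ is $3$-intersecting, $\hht(B_1,[6])=\hht(x,[6])$ contains no pseudo sunflower of size $2s-1=3$, so by Fact~\ref{fact-4.4} it is a triangle or a subgraph of $C_4$. A star embedding into $C_3$ or $C_4$ can have at most two edges, since otherwise its center would have degree at least $3$, exceeding the maximum degree $2$ of $C_3$ and $C_4$; hence $|\hht(B_1,[6])|\le 2$, and the sum is again at most $2+1=3$. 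I expect this last step, excluding a star of size three in the link of a single vertex of $[6]$, to be the only real obstacle: it works only through the combination of the pseudo-sunflower bound of Fact~\ref{fact-3} with the $C_3$/$C_4$ classification of Fact~\ref{fact-4.4}, while the other cases are routine once the cross-intersecting reformulation is in place.
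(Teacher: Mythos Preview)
Your proof is correct and uses the same key ingredient as the paper (cross-intersection from Fact~\ref{fact-4.1}~(i) together with the standing bound $|\hht(x,[6])|\le 3$). The only difference is organizational: the paper cases on the structure of $\hht(B_1,[6])$ (if it is a $C_3$ or a $3$-edge path then no singleton can meet all its edges, forcing $\hht(B_2,[6])=\emptyset$; otherwise $|\hht(B_1,[6])|\le 2$ and cross-intersection finishes), whereas you case on $a=|\hht(B_2,[6])|$. Your $a=1$ step can be shortened: a star with three edges is already a pseudo sunflower of size $3$ with center $\{z\}$, so Fact~\ref{fact-3} (with $\hr$ $3$-intersecting) rules it out directly, without passing through the $C_3/C_4$ classification of Fact~\ref{fact-4.4}.
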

\begin{proof}
If $\hht(B_1,[6])$ is a $C_3$ or a path of length 3 then $\hht(B_2,[6])=\emptyset$. In the other cases $|\hht(B_1,[6])|+|\hht(B_2,[6])|\leq 3$ holds by the cross-intersecting property.
\end{proof}

Let $[6]=U_1\cup U_2$ with $U_1,U_2\in \hht$.  By  \eqref{ineq-4.3} we have
\begin{align}\label{ineq-4.4}
\sum_{y\in U_i} |\hht(y,[6])| +|\hht(U_i\setminus \{y\},[6])| \leq 3\times 3=9, \ i=1,2.
\end{align}
Since $U_1\times U_2$ can be partitioned into 3 matchings.  For each matching $P_1,P_2,P_3$, by \eqref{ineq-4.1}
\[
\sum_{1\leq i\leq 3} |\hht(P_i,[6])| \leq 6.
\]
It follows that
\begin{align}\label{ineq-4.5}
\sum_{P\in U_1\times U_2} |\hht(P,[6])| \leq 3\times 6= 18.
\end{align}
Adding \eqref{ineq-4.4} for $i=1,2$ to \eqref{ineq-4.5}, we obtain that $|\hht_1| +|\hht_2|\leq 36$. Thus,
\[
|\hht| =|\hht_1| +|\hht_2|+|\hht_3| \leq 36+\binom{6}{3} =56.
\]

\section{Proof of Lemma \ref{lem-main}}

Let $\hht\subset \binom{[n]}{3}$ be a 2-resilient family with matching number 2 and  let $u,v\in [n]$ be  arbitrary. By 2-resilience, there exist disjoint sets $T_1,T_2\in \hht(\bar{u},\bar{v})$. Let $R=T_1\cup T_2$ and $S=T_1\cup T_2\cup \{u,v\}$.

For each $P\in (u,v)\times (T_1\cup T_2)$  and $P\subset T\in \hht_{uv}$, define  $\omega(P,T)=1$ if $|T\cap S|=2$ and define  $\omega(P,T)=\frac{1}{2}$ if  $T\subset S$. Define
\[
\omega(P) =\sum_{T\colon T\in \hht_{uv}, P\subset T} \omega(P,T).
\]
 Note that an edge $(u,v,w)\in \hht$ with $w\in T_1\cup T_2$ will contribute $\frac{1}{2}$ to  both $\omega(u,w)$ and $\omega(v,w)$, and an edge $(x,y,z)\in \hht$ with $x\in \{u,v\}$ and $y,z\in T_1\cup T_2$ will contribute  $\frac{1}{2}$ to both $\omega(x,y)$ and $\omega(x,z)$. It follows that
\begin{align}\label{ineq-key8}
|\hht_{uv}| =  \sum_{P\in (u,v)\times T_1} \omega(P)+ \sum_{P\in (u,v)\times T_2} \omega(P).
\end{align}

\begin{fact}\label{fact-5.1}
For each $P\in (u,v)\times T_1$, $|\hht(P,S)|\leq 3$. Moreover, if $|\hht(P,S)|= 3$ then $P\cup \{u,v\}\notin \hht$ and there exist disjoint sets $V_1,V_2\in \hht(\overline{\{u,v\}\cup P})$ such that $|V_1\cap T_1|=|V_1\cap T_2|=1$ and $|V_2\cap T_2|= 1$.
\end{fact}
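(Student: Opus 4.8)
The plan is to fix $P=\{u,b\}$ with $b\in T_1$; the case $P=\{v,b\}$ is handled identically, since $T_1,T_2,R,S$ are symmetric in $u$ and $v$ and so is the asserted conclusion. Write $T_1=\{b,b_1,b_2\}$, $T_2=\{c_1,c_2,c_3\}$, and note that the members of $\hht(P,S)$ are exactly the vertices $x\notin S$ with $\{u,b,x\}\in\hht$; list them as $x_1,x_2,\dots$ and put $E_i=\{u,b,x_i\}$. The engine of the proof is $2$-resilience applied to the pair $\{u,b\}$: it yields a matching $V_1,V_2\in\hht$, disjoint from $\{u,b\}$, of size $2=\nu(\hht)$. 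This is a maximum, hence maximal, matching of $\hht$, so Fact \ref{fact-1}, applied to $\{V_1,V_2\}$ and $\{T_1,T_2\}$, gives a perfect matching in the bipartite graph recording the nonempty intersections; in particular neither $V_j$ is disjoint from $R$.

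First I would show $|\hht(P,S)|\le 3$. Since $V_1,V_2$ avoid $u$ and $b$, the edge $E_i$ can meet $V_1\cup V_2$ only in $x_i$; hence if $x_i\notin V_1\cup V_2$ then $\{E_i,V_1,V_2\}$ is a $3$-matching, which is impossible. So all the $x_i$ lie in $V_1\cup V_2$, a $6$-set. Four of them would have to split $2{+}2$ between $V_1$ and $V_2$, since a $3{+}1$ split would leave one $V_j$ contained in $\{x_1,\dots,x_4\}$, hence disjoint from $R$, contradicting Fact \ref{fact-1}; then each $V_j$ has a single vertex outside $\{x_1,\dots,x_4\}$, both of these lie in $T_1\cup T_2$, and the perfect matching forces one into $T_1\setminus\{b\}$ and the other into $T_2$, so we may take $V_1=\{x_1,x_2,b_j\}$ with $b_j\in T_1\setminus\{b\}$. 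But then $E_3,V_1,T_2$ are pairwise disjoint, a contradiction. Thus $|\hht(P,S)|\le 3$.

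Now suppose $|\hht(P,S)|=3$. Again $x_1,x_2,x_3\in V_1\cup V_2$, and they cannot all lie in one $V_j$ (that $V_j$ would be disjoint from $R$, contradicting Fact \ref{fact-1}), so after relabelling $V_1=\{x_1,x_2,w\}$ and $x_3\in V_2$. Since $w$ is the only vertex of $V_1$ that can lie in $R$, Fact \ref{fact-1} forces $w\in T_1\cup T_2$; and $w\in T_1\setminus\{b\}$ is impossible, as then $E_3,V_1,T_2$ would be pairwise disjoint, so $w\in T_2$. Then $V_1$ is joined only to $T_2$ in the bipartite graph, the perfect matching pairs $V_2$ with $T_1$, and $V_2=\{x_3,b_j,z\}$ for some $b_j\in T_1\setminus\{b\}$. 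Finally, $z\notin T_2$ would make $E_1,V_2,T_2$ pairwise disjoint, impossible, so $z\in T_2\setminus\{w\}$. At this point $V_2$ has one vertex in $T_1$ and one in $T_2$, $V_1$ has one vertex in $T_2$, both avoid $\{u,v,b\}$ (their vertices outside $R$ are $x_i\notin S$), and they are disjoint; taking these, in the order $V_2,V_1$, as the required pair finishes the existence claim. Since $\{u,v,b\}$ is disjoint from both of them, $\{u,v,b\}\notin\hht$, as otherwise $\{u,v,b\},V_1,V_2$ would be a $3$-matching.

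I expect the only delicate point to be sequencing the ``$3$-matching'' contradictions correctly: first to forbid a fourth $x_i$, then to keep the extra vertex $w$ of $V_1$ out of $T_1$, then to drive the last vertex $z$ of $V_2$ into $T_2$. Each of these reduces, once the configuration is fixed, to a one-line disjointness check, with Fact \ref{fact-1} playing the role of anchoring the auxiliary matching $\{V_1,V_2\}$ to $R$ throughout; one should also remember to verify that $V_1$ and $V_2$ avoid $v$, which holds because their vertices outside $R$ are among the $x_i\notin S$.
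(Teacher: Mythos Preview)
Your proof is correct and follows essentially the same route as the paper: obtain $V_1,V_2$ from $2$-resilience at $\{u,b\}$, force all the $x_i$ into $V_1\cup V_2$, and then use Fact~\ref{fact-1} together with a short sequence of $3$-matching contradictions to pin down the intersection pattern with $T_1,T_2$. The only cosmetic differences are that the paper derives $|\hht(P,S)|\le 3$ and $P\cup\{u,v\}\notin\hht$ by invoking Fact~\ref{fact-3} (the pseudo-sunflower bound on $\hht(x,R)$) rather than arguing directly, and it organises the case split as ``$|(V_1\cup V_2)\cap R|=2$ versus $3$'' instead of your ``which $T_i$ contains $w$''; the underlying contradictions are the same, and your final pair is the paper's $(V_1,V_2)$ with the indices swapped.
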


\begin{proof}
By Fact \ref{fact-3}, $\hht(x,R)$ does not contain a pseudo sunflower of size 4. It follows that $|\hht(\{u,x\},R\cup \{x\})|\leq 3$ and  $|\hht(\{u,x\},S)|= 3$ implies $(u,x,v)\notin \hht$.
 Suppose that  $(u,x,w_j)\in \hht$ with $w_j\notin S$, $j=1,2,3$.  Then by 2-resilience there exist disjoint sets $V_1,V_2\in \hht(\overline{\{u,x\}})$. Since $w_1,w_2,w_3\in (V_1\cup V_2)\setminus  (T_1\cup T_2)$,  we conclude that $|(V_1\cup V_2)\cap (T_1\cup T_2)|\leq 3$.

 If $|(V_1\cup V_2)\cap (T_1\cup T_2)|= 2$, then by symmetry we may assume $|V_i\cap T_i|=1$, $i=1,2$ and $w_3\in V_2\setminus T_2$. It follows that $V_1,T_2,P\cup\{w_3\}$ form a matching, contradicting $\nu(\hht)=2$.  Thus, $|(V_1\cup V_2)\cap (T_1\cup T_2)|= 3$ and $u,v\notin V_1\cup V_2$.
If one of $V_1,V_2$, say $V_1$, is disjoint to $T_2$, then one can find $w\in V_2\setminus R$ such that $\{u,x,w\}, V_1, T_2$ form a matching in $\hht$, a contradiction. Thus  by symmetry we may assume that  $|V_1\cap T_1|=|V_1\cap T_2|=1$ and $|V_2\cap T_2|= 1$.
\end{proof}

For $P \in (u,v)\times T_i$, let
\[
\hht_2(P)=  \left\{ w\in S\colon P\cup \{w\}\in \hht \right\}.
\]

\begin{fact}\label{fact-5.4}
For any $P \in (u,v)\times T_i$,
\begin{align}\label{ineq-5.4}
\omega(P) \leq |\hht(P,S)| +\frac{|\hht_2(P)|}{2}\leq |\hht(P,S)| +\frac{6-|\hht(P,S)|}{2}.
\end{align}
\end{fact}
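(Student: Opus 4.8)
\textbf{Proof proposal for Fact \ref{fact-5.4}.}

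The plan is to bound $\omega(P)$ by separating the contribution of edges $T\supset P$ with $|T\cap S|=2$ (weight $1$ each) from those with $T\subset S$ (weight $\tfrac12$ each). The first group is counted exactly by $|\hht(P,S)|$, since an edge $T$ with $P\subset T$, $|T\cap S|=2$ corresponds precisely to $T\setminus P\in\hht(P,S)$. For the second group, if $P\subset T\subset S$ then the third vertex of $T$ lies in $S\setminus P$, and such a vertex $w$ must actually satisfy $w\in S\setminus P$ with $P\cup\{w\}\in\hht$; by definition this set of vertices is exactly $\hht_2(P)$ when we note $\hht_2(P)$ as defined ranges over $w\in S$ (and $w\notin P$ automatically since $P\cup\{w\}$ has size $3$). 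So the second group contributes $\tfrac12|\hht_2(P)|$, giving the first inequality $\omega(P)\le |\hht(P,S)|+\tfrac12|\hht_2(P)|$.

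For the second inequality I would show $|\hht_2(P)|\le 6-|\hht(P,S)|$. The key observation is that both quantities are "slots" for the third vertex of an edge through $P$: write $P=\{a,b\}$ with (say) $a\in\{u,v\}$, $b\in T_i$. The third vertex $w$ of an edge $P\cup\{w\}\in\hht$ either lies outside $S$ (contributing to $\hht(P,S)$) or inside $S$ (contributing to $\hht_2(P)$). Since $|S\setminus P|=8-2=6$, the number of vertices $w\in S\setminus P$ with $P\cup\{w\}\in\hht$ is at most $6$, i.e. $|\hht_2(P)|\le 6$. But this only gives $|\hht_2(P)|+|\hht(P,S)|\le 6$ if we can simultaneously bound the \emph{off-}$S$ part; that is where Fact \ref{fact-3} (or Corollary \ref{cor-1}) enters: $|\hht(P)|=|\hht_2(P)|+|\hht(P,S)|\le 3s=6$ for $s=2$, since $\hht(P)$ counts \emph{all} third vertices through the pair $P$, whether in $S$ or not, and by Corollary \ref{cor-1} any pair sits in at most $3s$ edges. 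Hence $|\hht_2(P)|\le 6-|\hht(P,S)|$, and substituting into the first inequality finishes the proof.

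The main (and essentially only) obstacle is making sure the bookkeeping lines up: that $\hht_2(P)$ really does equal $\{w\in S\setminus P: P\cup\{w\}\in\hht\}$ rather than something off by the trivial element, and that $|\hht(P)|$ genuinely equals $|\hht_2(P)|+|\hht(P,S)|$ — this is just the partition of edges through $P$ according to whether the third vertex is in $S$ or not, together with the observation that no edge through $P$ has its third vertex equal to an element of $P$. Everything else is a direct application of Corollary \ref{cor-1} with $s=2$. I would also remark that the monotone function $x\mapsto x+\tfrac{6-x}{2}=\tfrac{x}{2}+3$ is increasing in $x$, so the chain of inequalities in \eqref{ineq-5.4} is consistent and the final bound $\omega(P)\le\tfrac12|\hht(P,S)|+3$ is what will be used downstream.
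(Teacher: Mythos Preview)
Your proof is correct and follows essentially the same approach as the paper: decompose $\omega(P)$ according to whether the third vertex lies in $S$ or not (giving the first inequality, in fact an equality), then apply Corollary~\ref{cor-1} with $s=2$ to get $|\hht(P)|=|\hht_2(P)|+|\hht(P,S)|\leq 6$, from which the second inequality follows.
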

\begin{proof}
By Corollary  \ref{cor-1} we have $|\hht(P)|\leq 6$. Then
\[
|\hht_2(P)|=|\hht(P)|- |\hht(P,S)|\leq 6- |\hht(P,S)|
\]
and the fact follows.
\end{proof}

Let us first prove a weaker version of Lemma \ref{lem-main}.

\begin{lem}\label{lem-5.1}
Let $\hht\subset \binom{[n]}{3}$ be a 2-resilient family with matching number 2. Then for any $u,v\in [n]$,
\[
|\hht_{uv}|\leq 42.
\]
\end{lem}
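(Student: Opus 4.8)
The plan is to bound $|\hht_{uv}|$ using the weight identity \eqref{ineq-key8} together with the estimate \eqref{ineq-5.4}, by grouping the pairs $P\in(u,v)\times(T_1\cup T_2)$ into matchings and controlling the sum of $|\hht(P,S)|$ over each matching. Since $(u,v)\times(T_1\cup T_2)$ has $12$ pairs, a naive bound $\omega(P)\le 3+\frac{3}{2}=\frac92$ gives only $54$; the point is that the values $|\hht(P,S)|=3$ cannot occur too often or too freely, and when $|\hht(P,S)|$ is small the bound $\omega(P)\le|\hht(P,S)|+\frac{6-|\hht(P,S)|}{2}$ is already better. So the first step is to single out the ``heavy'' pairs, those with $|\hht(P,S)|=3$, and use the structural conclusion of Fact \ref{fact-5.1}: for such a $P=\{x_0,x\}$ with $x_0\in\{u,v\}$, $x\in T_i$, there are disjoint $V_1,V_2\in\hht(\overline{\{u,v\}\cup P})$ with $|V_1\cap T_1|=|V_1\cap T_2|=1$ and $|V_2\cap T_{3-i}|=1$ (after relabeling), and moreover $P\cup\{u,v\}\notin\hht$.

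Next I would use the matching structure of $(u,v)\times T_i$: for a fixed $i$, the three pairs $(u,a_1),(v,a_2),(?)$ — more precisely, $(u,v)\times T_i$ is a $K_{2,3}$, which decomposes into matchings, and by the argument already used in Lemma on $\hht_0$ (no three disjoint one-element sets $\{x\}\in\hht(P_1,S),\{y\}\in\hht(P_2,S),\{z\}\in\hht(P_3,S)$ can exist when $P_1,P_2,P_3$ are pairwise disjoint), a matching of three pairs contributes at most $4+\frac{6-3-3-0}{\cdots}$-type savings. The cleaner route: handle $(u,v)\times T_1$ and $(u,v)\times T_2$ together. There are $4$ matchings of size $3$ covering the $12$-element bipartite set $(u,v)\times(T_1\cup T_2)$ (e.g.\ $\{(u,a_1),(v,a_2),(u,a_3)\}$ type patterns), or one can fix a perfect matching $\{u,v\}\to$ two vertices and analyze directly. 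For each triple $P_1,P_2,P_3$ of pairwise-disjoint pairs among these $12$, the overlapping condition forces $|\hht(P_1,S)|+|\hht(P_2,S)|+|\hht(P_3,S)|\le$ some value $c$, and combined with \eqref{ineq-5.4} one gets $\omega(P_1)+\omega(P_2)+\omega(P_3)\le \frac{c}{2}+\frac{18-c}{2}\cdot\frac{?}{}$ — I would compute the worst case of $\sum\big(|\hht(P_j,S)|+\tfrac{6-|\hht(P_j,S)|}{2}\big)=9+\tfrac12\sum|\hht(P_j,S)|$ over a matching, so it suffices to show $\sum_{j}|\hht(P_j,S)|\le c$ for the right $c$, and then sum over the $4$ matchings covering the $12$ pairs: $|\hht_{uv}|\le 4\cdot\big(9+\tfrac{c}{2}\big)$. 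To land at $42$ we need $4(9+\tfrac{c}{2})=42$, i.e.\ $c=\tfrac{3}{2}$, which is too strong; more realistically the grouping is into $6$ matchings of size $2$ or the accounting is sharper — one uses that within a single $(u,v)\times T_i$ (a $K_{2,3}$, three disjoint pairs is impossible, only pairs of disjoint pairs), so I would instead bound each $(u,v)\times T_i$ (six pairs total) by roughly $21$: split $(u,v)\times T_i$ into pairs of disjoint pairs $\{(u,a),(v,b)\}$ with $a\ne b$, note $|\hht(P,S)|=3$ for both is impossible or severely constrained via Fact \ref{fact-5.1}, and conclude each $(u,v)\times T_i$ contributes at most $21$, giving $|\hht_{uv}|\le 42$.

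The main obstacle, and where the real work lies, is the bookkeeping that prevents too many heavy pairs $|\hht(P,S)|=3$ from coexisting. The structural output of Fact \ref{fact-5.1} is the lever: each heavy $P$ produces a pair $V_1,V_2$ of edges meeting $R=T_1\cup T_2$ in a very specific way (one vertex of each $T_j$ in $V_1$, one vertex of $T_{3-i}$ in $V_2$), and $P\cup\{u,v\}\notin\hht$. If two heavy pairs $P=\{u,x\}$ and $P'=\{v,y\}$ with $x,y$ in the same or different $T_i$ both occurred, I would derive a matching of size $3$ in $\hht$ (using the $V$'s produced, plus one of the heavy edges, plus possibly $T_{3-i}$), contradicting $\nu(\hht)=2$ — this is the same ``combine the sunflower petal with a fresh matching'' mechanism used throughout Section 2 and in Fact \ref{fact-5.1} itself. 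Carefully enumerating the cases (same $T_i$ vs.\ different $T_i$; sharing $u$, sharing $v$, or disjoint pairs) to show at most a bounded number of heavy pairs — say at most one per $(u,v)\times T_i$, or a global bound — and then plugging the resulting profile of $|\hht(P,S)|$-values into \eqref{ineq-5.4} and summing, is the crux; everything else is the arithmetic of adding at most $4$ over twelve slots with the heavy-slot penalty, yielding $|\hht_{uv}|\le 42$.
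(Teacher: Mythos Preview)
Your decomposition---pair up $(u,v)\times T_i$ into three disjoint $2$-matchings $\{(u,x),(v,y)\}$ and aim for a bound of $21$ on each side---is exactly what the paper does. But the mechanism you propose for the bound has a genuine gap. You plan to control only the ``heavy'' pairs with $|\hht(P,S)|=3$ via the structural output of Fact~\ref{fact-5.1}. Even granting at most one heavy pair per $(u,v)\times T_i$, nothing in your argument prevents the remaining five pairs from each having $|\hht(P,S)|=2$; then $\sum_P|\hht(P,S)|=13$ and \eqref{ineq-5.4} gives only $\sum_P\omega(P)\le 18+\tfrac{13}{2}>21$. Bounding heavy pairs is not enough; you need control over \emph{all} pairs in a $2$-matching simultaneously.

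The observation that actually does the job is simpler and stronger, and it does not use the ``moreover'' of Fact~\ref{fact-5.1} at all. For disjoint $(u,x),(v,y)$ with $x\ne y$ in the \emph{same} $T_i$, the singleton families $\hht(\{u,x\},S)$ and $\hht(\{v,y\},S)$ are cross-intersecting: if $(u,x,w_1),(v,y,w_2)\in\hht$ with $w_1\ne w_2$ outside $S$, then together with $T_{3-i}$ (which avoids $u,v,x,y,w_1,w_2$) one obtains a matching of size~$3$. Cross-intersecting $1$-uniform families means either one is empty or both equal the same singleton $\{t\}$. In the latter case \eqref{ineq-5.4} gives $\omega(u,x)+\omega(v,y)\le 2(1+\tfrac52)=7$. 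In the former, say $\hht(\{v,y\},S)=\emptyset$ while $\hht(\{u,x\},S)\ne\emptyset$, the same $T_{3-i}$-trick forces $(v,y,z)\notin\hht$ for $z=T_i\setminus\{x,y\}$, so $|\hht(\{v,y\})|\le 5$ and $\omega(u,x)+\omega(v,y)\le(3+\tfrac32)+\tfrac52=7$. Summing over the three $2$-matchings gives $21$, hence $42$ in total. The structural conclusion of Fact~\ref{fact-5.1} you built your plan around is not used here; it only enters in the sharper Lemmas~\ref{lem-5.2} and~\ref{lem-3.2}.
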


\begin{proof}
Let $T_1,T_2\in \hht(\bar{u},\bar{v})$ be disjoint sets.
Then $(u,v)\times T_i$ can be partitioned into 3 groups with  each forming a  2-matching $(u,x),(v,y)$, $i=1,2$.

Since $\hht(\{u,x\}, S),\hht(\{v,y\}, S)$ are cross-intersecting (noting that $x,y\in T_i$), we infer that either one of them is empty or they  are an identical 1-element set. Let $z=T_i\setminus \{x,y\}$.

{\bf Case A. } One of them is empty, say $\hht(\{v,y\}, S)=\emptyset$.

Then $(v,y,z)\notin \hht$. By Corollary \ref{cor-1},  $|\hht(P)|\leq 6$ for each $P\in \binom{[n]}{2}$.  By Fact \ref{fact-3} we infer that
 \[
 \omega(u,x)\leq 3+\frac{3}{2}.
 \]
 Moreover $\hht(\{v,y\})\subset S\setminus \{v,y,z\}$ implies $\omega(v,y)\leq \frac{5}{2}$.
Thus,
\begin{align*}
\omega(u,x)+\omega(v,y)\leq (3+\frac{3}{2})+\frac{5}{2}=7.
\end{align*}

{\bf Case B. } $\hht(\{u,x\}, S)=\hht(\{v,y\}, S)=\{\{t\}\}$ for some $t\in [n]\setminus S$.

Then
\begin{align*}
\omega(u,x)+\omega(v,y)\leq 2\times (1+\frac{5}{2})= 7.
\end{align*}
Thus,
\[
\sum_{P\in (u,v)\times T_i} \omega(P) \leq 3\times 7=21.
\]
Therefore,
\[
|\hht_{uv}| \leq \sum_{P\in (u,v)\times (T_1\cup T_2)} \omega(P)\leq 21+21=42.
\]
\end{proof}

\begin{fact}\label{fact-5.2}
Let $P\in (u,v)\times T_1$. If $|\hht(P,S)|\geq 1$ then $T_1\cup \{u,v\}\setminus P \notin \hht$.
\end{fact}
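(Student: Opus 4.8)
The final statement to prove is Fact~\ref{fact-5.2}: for $P=(x,w)\in(u,v)\times T_1$ (writing $x\in\{u,v\}$, $w\in T_1$), if $|\hht(P,S)|\geq 1$ then $T_1\cup\{u,v\}\setminus P\notin\hht$.

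\bigskip

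The plan is a short direct argument by contradiction, exploiting $\nu(\hht)=2$ together with 2-resilience. Suppose $|\hht(P,S)|\geq 1$, so there is some $t\notin S$ with $P\cup\{t\}=\{x,w,t\}\in\hht$. Suppose also that $Q:=T_1\cup\{u,v\}\setminus P\in\hht$; note $Q$ is a $3$-set disjoint from $\{x,w,t\}$ except possibly it meets $\{x,w\}$ — but precisely $Q$ consists of the one remaining vertex of $\{u,v\}$ and the two remaining vertices of $T_1$, hence $Q\cap(P\cup\{t\})=\emptyset$. So $\{x,w,t\}$ and $Q$ are two disjoint edges of $\hht$. To get a matching of size $3$ I would produce a third edge disjoint from both. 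The natural candidate is $T_2$: it is disjoint from $T_1\supset\{w\}\cup(Q\setminus\{u,v\})$ and from $\{u,v\}$ by the choice of $T_1,T_2\in\hht(\bar u,\bar v)$, so $T_2$ is disjoint from $Q$; and $T_2\cap\{x,w,t\}$: $x\in\{u,v\}$ is disjoint from $T_2$, $w\in T_1$ is disjoint from $T_2$, so the only issue is whether $t\in T_2$. If $t\notin T_2$ then $\{x,w,t\},Q,T_2$ is a matching of size $3$, contradicting $\nu(\hht)=2$, and we are done.

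\bigskip

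So the one case to rule out is $t\in T_2$. In that case I would instead invoke 2-resilience at the pair $\{x,w\}=P$ (or at $\{x,t\}$): pick a matching $R'=V_1\cup V_2\in\hr(\overline{\{x,w\}})$ — this exists since $\hht$ is $2$-resilient — so $V_1,V_2\in\hht$ are disjoint and avoid both $x$ and $w$. Since $\{x,w,t\}\in\hht$ and $\nu(\hht)=2$, the pair $V_1,V_2$ cannot be extended, but more usefully I can argue about how $V_1\cup V_2$ meets $Q$ and $T_2$. The cleaner route: $Q\cup\{x,w,t\}\setminus\{t\}$ isn't quite what I want; instead note $\{u,v\}\subset Q\cup\{x\}$, so $x$ together with $Q$ covers $\{u,v\}$ — I would choose $R'$ avoiding $x$ and $w$, observe $(V_1\cup V_2)\cap T_1\subseteq\{$ the two vertices of $T_1$ other than $w\}=Q\setminus\{u,v\}$, a $2$-set, and similarly bound $(V_1\cup V_2)\cap T_2$ using that $V_1,V_2$ avoid $x\in\{u,v\}$. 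Since $|V_1\cup V_2|=6$ and $R=T_1\cup T_2$ has only $6$ vertices with $\hr$ being $2$-intersecting, $|(V_1\cup V_2)\cap R|\in\{2,3,4,6\}$ is constrained; pushing this exactly as in the proof of Fact~\ref{fact-5.1} (the $|(V_1\cup V_2)\cap(T_1\cup T_2)|=2$ versus $3$ dichotomy) lets me extract from $V_1,V_2$ an edge disjoint from $T_2$ and from $\{x,w,t\}$, hence again a $3$-matching with $\{x,w,t\}$ and $T_2$ (or with $Q$), the desired contradiction.

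\bigskip

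The main obstacle is precisely the subcase $t\in T_2$: there the three ``obvious'' edges $\{x,w,t\}$, $Q$, $T_2$ pairwise intersect only because $t$ sits in $T_2$, so one must replace $T_2$ by a different matching edge obtained from 2-resilience and then carefully track intersection patterns with $T_1\cup T_2$ — exactly the kind of case analysis already carried out in Facts~\ref{fact-3} and~\ref{fact-5.1}. I expect that after fixing $R'\in\hr(\overline{\{x,w\}})$ a one- or two-line intersection count (mirroring the argument in Fact~\ref{fact-5.1}) finishes it, so the proof should be comparable in length to that of Fact~\ref{fact-5.2}'s neighbours. If a slicker finish is available, it is to observe directly that $T_1\cup\{u,v\}\setminus P$, $\{x,w,t\}$ and \emph{any} edge of $\hr(\overline{\{x,w,t\}})$ — which exists by applying 2-resilience to the $\leq 2$-set $\{x,t\}$ after noting $w$ may be re-covered — already form a forbidden $3$-matching once one checks disjointness, collapsing both cases into one.
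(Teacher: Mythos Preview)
Your first paragraph is already the complete proof and is exactly what the paper does. The point you missed is that the ``obstacle'' $t\in T_2$ cannot occur: you yourself chose $t$ with $t\notin S$, and by definition $S=T_1\cup T_2\cup\{u,v\}\supseteq T_2$, so $t\notin T_2$ is automatic. Hence $\{x,w,t\}$, $Q=T_1\cup\{u,v\}\setminus P$, and $T_2$ are always pairwise disjoint, giving the forbidden matching of size~$3$ immediately. The paper's proof is precisely this one line.

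Everything from ``So the one case to rule out is $t\in T_2$'' onward is therefore unnecessary. It is also, as written, not a proof: you gesture at repeating the case analysis of Facts~\ref{fact-3} and~\ref{fact-5.1} and say you ``expect'' an intersection count finishes it, but you never actually carry out the extraction of the third disjoint edge. Had the case been genuine, that sketch would not yet be a proof. Fortunately the case is vacuous, so just delete the last two paragraphs and you have the paper's argument.
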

\begin{proof}
Indeed, assume that $P=(u,x)$ and $T_1\cup \{u,v\}\setminus P=(v,y,z) \in \hht$.  Then $(u,x,w), (v,y,z), T_2$ form a matching for some $w\notin S$, contradicting $\nu(\hht)\leq 2$.
\end{proof}

For $P\in (u,v)\times T_i$, let
\[
\omega_{in}(P) =\omega(P)-|\hht(P,S)|.
\]

\begin{fact}\label{fact-5.3}
Let $T_1=(x,y,z)$. If $|\hht(\{u,x\},S)|=3$, then
\begin{align}\label{ineq-5.1}
\omega_{in}(v,x)+\omega(v,y)+\omega(v,z)\leq 3.
\end{align}
\end{fact}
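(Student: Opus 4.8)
\textbf{Proof plan for Fact \ref{fact-5.3}.}
The setup is that $T_1=(x,y,z)$ and $|\hht(\{u,x\},S)|=3$, so by Fact \ref{fact-5.1} there exist disjoint $V_1,V_2\in\hht(\overline{\{u,v\}\cup\{u,x\}})=\hht(\overline{\{u,v,x\}})$ with $|V_1\cap T_1|=|V_1\cap T_2|=1$ and $|V_2\cap T_2|=1$; moreover $(u,v,x)\notin\hht$. The plan is to use these two extra edges $V_1,V_2$, which avoid $u,v,x$, to heavily restrict which triples through $v$ can lie in $\hht$. The quantity to bound, $\omega_{in}(v,x)+\omega(v,y)+\omega(v,z)$, counts (with weights $1$ for edges meeting $S$ in exactly two points and $\tfrac12$ for edges inside $S$) the edges of $\hht$ containing one of the pairs $(v,x),(v,y),(v,z)$, except that for the pair $(v,x)$ only the ``inside $S$'' contribution $\omega_{in}(v,x)$ is counted. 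Since $V_1\cap T_1$ and $V_2\cap T_2$ together hit at most two of the six points of $R$, and $T_1=(x,y,z)$, I would first locate which vertices of $T_1$ and $T_2$ are used by $V_1,V_2$.

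First I would record the key consequence of having the matching $T_2,V_1$ (or $T_2,V_2$) together with a putative edge through $v$: any edge $(v,*,*)\in\hht$ must meet every edge of the matching it would otherwise extend. Concretely, $V_1$ and $T_2$ are disjoint and avoid $v$, so an edge $E\in\hht$ with $v\in E$ and $E\cap(V_1\cup T_2)=\emptyset$ would give a $3$-matching $E,V_1,T_2$ (recall $V_1\cap T_1\ne\emptyset$ so $V_1\ne T_1$, but that does not matter — what matters is $\nu(\hht)=2$). Hence every edge of $\hht$ through $v$ meets $V_1\cup T_2$, and likewise meets $V_2\cup T_2$. Second, I would use $(u,v,x)\notin\hht$ to knock out one ``inside $S$'' edge; this is exactly why the pair $(v,x)$ appears with only $\omega_{in}$. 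Third — and this is the crux — I would combine the edge through $u$, namely some $(u,x,w)\in\hht$ with $w\notin S$, with an edge through $v$ and with one of $T_2, V_1, V_2$ to build a $3$-matching and derive a contradiction unless the edge through $v$ is very constrained. For instance $(u,x,w)$, $T_2$, and any edge $(v,y,y')$ or $(v,z,z')$ disjoint from $\{u,x,w\}\cup T_2$ would be a $3$-matching; since $w$ ranges over at least three distinct values (the three points of $\hht(\{u,x\},S)$), the pair $(v,y)$ resp. $(v,z)$ can be extended inside $S$ or to very few outside points only.

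The bookkeeping I would then carry out: split each of $\omega(v,y),\omega(v,z)$ into its ``outside'' part $|\hht(\{v,y\},S)|$ resp. $|\hht(\{v,z\},S)|$ and its ``inside'' part; by the cross-intersecting property of $\hht(\{u,x\},S)$ with $\hht(\{v,y\},S)$ and with $\hht(\{v,z\},S)$ (Fact \ref{fact-4.3}, since $x,y,z\in T_1$), and since $\hht(\{u,x\},S)$ has size $3$ hence is a triangle or a path of length $3$ in the link, those two cross-links are forced to be empty or a single common point; then Fact \ref{fact-4.1}(ii)-type overlapping among $\hht(\{v,x\},S),\hht(\{v,y\},S),\hht(\{v,z\},S)$ (they are pairwise cross-intersecting as $x,y,z$ all lie in $T_1$, and more strongly any system of edges through $v,y$ and through $v,z$ and a third point cannot be a matching with $T_2$) caps the total outside weight, while the inside weight is controlled by counting the at most $\binom{5}{1}=5$ points of $S\setminus\{v,x,y,z,\cdot\}$ available and halving, minus the $(u,v,x)$ edge that is excluded. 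Adding the pieces should land at exactly $3$. The main obstacle I anticipate is not any single inequality but the case analysis on \emph{where} $V_1\cap T_2$, $V_2\cap T_2$, and $V_1\cap T_1$ sit relative to $\{x,y,z\}$ and on the shape (triangle vs.\ path) of the link $\hht(\{u,x\},S)$ — several of these configurations need to be ruled out or bounded separately, and getting the weight of the ``inside $S$'' edges through $v$ exactly right (so the three $\tfrac12$'s plus at most one $1$ do not exceed $3$ after removing $(u,v,x)$) is where the constant $3$ is tight and the argument must be pushed carefully.
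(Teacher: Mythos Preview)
Your plan has the right skeleton—invoke Fact~\ref{fact-5.1} to obtain $V_1,V_2$, note $(u,v,x)\notin\hht$, and use cross-intersection with the three outside edges $\{u,x,w_i\}$ to kill $\hht(\{v,y\},S)$ and $\hht(\{v,z\},S)$—but the execution goes wrong and the decisive step is missing. Your claim that an edge through $v$ must meet $V_1\cup T_2$ (else $E,V_1,T_2$ would be a $3$-matching) is false: Fact~\ref{fact-5.1} gives $|V_1\cap T_2|=1$, so $V_1$ and $T_2$ are \emph{not} disjoint. The correct constraint is that every edge of $\hht$ meets $V_1\cup V_2$. Your references to Facts~\ref{fact-4.3} and~\ref{fact-4.1} are also misplaced (those live in the Section~4 setup with $\hr$ $3$-intersecting); the cross-intersection of $\hht(\{u,x\},S)$ and $\hht(\{v,y\},S)$ here comes directly from $\nu(\hht)=2$ via $T_2$, as in the proof of Lemma~\ref{lem-5.1}. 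And $\hht(\{u,x\},S)$ is a set of three singletons, not a graph, so the ``triangle vs.\ path'' dichotomy is meaningless.

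More importantly, your bookkeeping is too coarse to land on $3$. The paper does not merely cap the inside contributions by a crude count. After fixing $V_1\cap T_1=\{z\}$ it uses that $V_2\cap T_1=\emptyset$ (this follows from $|(V_1\cup V_2)\cap R|=3$), so $y\notin V_1\cup V_2$, and hence any $(v,y,w)\in\hht$ is forced to have $w\in(V_1\cup V_2)\cap S\setminus\{z\}=\{y',z'\}$. Each of $(v,y,y')$ and $(v,y,z')$ is then eliminated by an explicit $3$-matching with one of $V_1,V_2$ together with a suitable $\{u,x,w_i\}$—this works precisely because the three points of $\hht(\{u,x\},S)$ are exactly the three outside vertices of $V_1\cup V_2$. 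Thus $\hht(\{v,y\})=\emptyset$ and $\omega(v,y)=0$. A parallel argument gives $\hht(\{v,z\})\subset\{u,x,y'\}$ and $\hht(\{v,x\})\cap S\subset\{z,y',z'\}$, so $\omega(v,z),\omega_{in}(v,x)\le\tfrac32$ and the total is $\tfrac32+0+\tfrac32=3$. Without the sharp conclusion $\omega(v,y)=0$ you overshoot; a generic ``at most five inside points, halved, minus one'' estimate cannot close the gap.
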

\begin{proof}
By Fact  \ref{fact-5.1} there exist disjoint sets $V_1,V_2\in \hht(\overline{\{u,x\}})$ such that $|V_1\cap T_1|=|V_1\cap T_2|=1$ and $|V_2\cap T_2|= 1$. By symmetry assume that $V_1\cap T_1=\{z\}$, $V_1\cap T_2=\{z'\}$, $V_2\cap T_2=\{y'\}$. Since $\hht(\{u,x\},S), \hht(\{v,y\},S)$ are cross-intersecting, we infer that $\hht(\{v,y\},S)=\emptyset$. Similarly $\hht(\{v,z\},S)=\emptyset$.
By Fact \ref{fact-5.2}, $\{v,y,z\}\notin \hht$. It follows that $\hht(\{v,y\})\subset \{y',z'\}$. However, $\{v,y,y'\}$ is disjoint to $V_1\cup \{u,x,w\}$ for some $w\in V_2\setminus T_2$ and $\{v,y,z'\}$ is disjoint to $V_2\cup \{u,x,w'\}$ for some $w'\in V_1\setminus R$. Thus $\hht(\{v,y\})=\emptyset$.

By the same argument, one can show that $\hht(\{v,z\})\subset \{u,x,y'\}$. Moreover, since $(v,x)$ is disjoint to $V_1\cup V_2$, we infer that $\hht(v,x) \cap S\subset \{z,y',z'\}$. Thus,
 \[
 \omega_{in}(v,x)+\omega(v,y)+\omega(v,z)\leq \frac{3}{2}+0+\frac{3}{2}=3.
 \]
\end{proof}

\begin{lem}\label{lem-5.2}
If there exists $P_0\in (u,v)\times T_1$ such that $|\hht(P_0,S)|=3$, then
\begin{align}\label{ineq-5.2}
\sum_{P\in (u,v)\times T_1} \omega(P) \leq 18.
\end{align}
\end{lem}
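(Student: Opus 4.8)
The plan is to set up Lemma~\ref{lem-5.2} as a refinement of the counting already used in Lemma~\ref{lem-5.1}, using the extra structural information supplied by Fact~\ref{fact-5.3}. Write $T_1=(x,y,z)$ and assume without loss of generality that $P_0=(u,x)$ with $|\hht(P_0,S)|=3$. Partition $(u,v)\times T_1$ into the three ``vertical'' pairs over $u$, namely $(u,x),(u,y),(u,z)$, and the three over $v$, namely $(v,x),(v,y),(v,z)$. By Fact~\ref{fact-5.4} and Corollary~\ref{cor-1}, each $\omega(P)\leq |\hht(P,S)|+\tfrac12(6-|\hht(P,S)|)=3+\tfrac12|\hht(P,S)|$, and since $|\hht(P,S)|\leq 3$ by Fact~\ref{fact-5.1}, we have $\omega(P)\leq \tfrac92$ for every $P$. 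So the trivial bound is $6\cdot\tfrac92=27$, and the job is to save $9$.

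The key saving comes from Fact~\ref{fact-5.3}, which gives $\omega_{in}(v,x)+\omega(v,y)+\omega(v,z)\leq 3$. Since $\omega(v,x)=\omega_{in}(v,x)+|\hht(\{v,x\},S)|$ and $|\hht(\{v,x\},S)|\leq 3$ (in fact the proof of Fact~\ref{fact-5.3} pins $\hht(\{v,x\},S)\subset\{z,y',z'\}$, giving $\le 3$, and forces $\hht(\{v,y\})=\emptyset$, $\hht(\{v,z\})\subset\{u,x,y'\}$), the three $v$-pairs together contribute at most $3+3=6$. For the three $u$-pairs I would bound $\omega(u,x)\leq 3+\tfrac32=\tfrac92$ using $|\hht(P_0,S)|=3$ and Corollary~\ref{cor-1}; for $\omega(u,y)$ and $\omega(u,z)$ I expect to invoke the cross-intersecting relations among $\hht(\{u,x\},S),\hht(\{u,y\},S),\hht(\{u,z\},S)$ — since $\hht(\{u,x\},S)$ is an intersecting family of size $3$ (a triangle or a path $P_3$), the cross-intersecting condition from Fact~\ref{fact-4}/the analogue of Fact~\ref{fact-4.3} forces $|\hht(\{u,y\},S)|$ and $|\hht(\{u,z\},S)|$ to be small, and at least one of them to be very small. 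If $\hht(\{u,x\},S)$ is a $C_3$ the other two are empty; if it is a path the other two together have size at most a small constant. Combining, the three $u$-pairs contribute at most $12$, and $6+12=18$ as required. I will need to be a little careful to handle the ``path'' subcase, possibly splitting further on whether $(u,y,z)\in\hht$ and using Fact~\ref{fact-5.2}.

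The main obstacle I anticipate is the bookkeeping in the $u$-side subcases: the clean bound $\omega(v,x)+\omega(v,y)+\omega(v,z)\leq 6$ is handed to us by Fact~\ref{fact-5.3}, but there is no symmetric statement for the $u$-side, so I must squeeze $\omega(u,x)+\omega(u,y)+\omega(u,z)\leq 12$ out of the cross-intersecting structure of the three families $\hht(\{u,\cdot\},S)$ sitting over the edge $T_1$, together with the fact that $\hht(\{u,x\},S)$ already has the maximal size $3$. The delicate point is that a path $\hht(\{u,x\},S)$ of the form $\{(a,b),(b,c)\}\cup\{(c,d)\}$ only forces its partners to avoid the ``interior'' vertices, so one can still have, say, $|\hht(\{u,y\},S)|=2$; I will then need the pairwise cross-intersecting conditions between $\hht(\{u,y\},S)$ and $\hht(\{u,z\},S)$ (they lie over the disjoint pair $(u,y),(u,z)$ inside the edge $(x,y,z)$, hence are cross-intersecting with respect to $S$) to force $|\hht(\{u,y\},S)|+|\hht(\{u,z\},S)|$ small enough, and feed the leftover capacity through $\omega(u,\cdot)\leq 3+\tfrac12|\hht(\cdot,S)|$. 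Once the right case split is chosen this should be routine; choosing it is the real content.
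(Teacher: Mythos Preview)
Your $v$-side bound is fine: Fact~\ref{fact-5.3} gives $\omega_{in}(v,x)+\omega(v,y)+\omega(v,z)\le 3$, and adding $|\hht(\{v,x\},S)|\le 3$ yields $\sum_{P\in \{v\}\times T_1}\omega(P)\le 6$. The gap is entirely on the $u$-side. Your plan rests on ``cross-intersecting relations among $\hht(\{u,x\},S),\hht(\{u,y\},S),\hht(\{u,z\},S)$'', but the pairs $(u,x),(u,y),(u,z)$ all contain $u$; any two edges $\{u,x,w_1\}$ and $\{u,y,w_2\}$ already meet in $u$, so $\nu(\hht)=2$ imposes no constraint whatsoever between these singleton families. (You also describe $\hht(\{u,x\},S)$ as a ``triangle or a path $P_3$''. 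It is a family of \emph{singletons}, not a graph: each member is $\{w\}$ with $w\notin S$ and $\{u,x,w\}\in\hht$.) Consequently there is nothing preventing $|\hht(\{u,y\},S)|=|\hht(\{u,z\},S)|=3$, and then the generic bound $\omega(u,\cdot)\le \tfrac92$ only gives $\sum_{P\in \{u\}\times T_1}\omega(P)\le 13.5$, not the $12$ you need.

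The correct lever is cross-intersection across the $\{u,v\}$-coordinate: the \emph{disjoint} pairs are $(u,y),(v,x)$ and $(u,z),(v,x)$, and for these $\hht(\{u,y\},S)$ and $\hht(\{v,x\},S)$ are cross-intersecting families of singletons (a putative $w_1\neq w_2$ would give the matching $\{u,y,w_1\},\{v,x,w_2\},T_2$). This forces a dichotomy. If some $|\hht(\{u,y\},S)|$ or $|\hht(\{u,z\},S)|$ is $\ge 2$ then $\hht(\{v,x\},S)=\emptyset$, so the $v$-side drops to $\le 3$ and the trivial $u$-side bound $13.5$ already gives $\le 16.5$. Otherwise both are $\le 1$, whence $\omega(u,y),\omega(u,z)\le 1+\tfrac52$ by \eqref{ineq-5.4}, the $u$-side is $\le \tfrac92+2\cdot\tfrac72=11.5$, and together with the $v$-side $\le 6$ you get $\le 17.5$. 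This is exactly the paper's two-case split; your proposal replaces it with a nonexistent intra-$u$ constraint, which is the real gap.
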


\begin{proof}
By symmetry assume that $T_1=(x,y,z)$ and $P_0=(u,x)$. We distinguish two cases.

{\bf Case A.} One of $|\hht(\{u,y\},S)|$, $|\hht(\{u,z\},S)|$ is greater than 2.

Since $\hht(\{v,x\},S), \hht(\{u,w\},S)$ are cross-intersecting for $w=y$ or $z$,  $\hht(\{v,x\},S)=\emptyset$. By \eqref{ineq-5.4} and \eqref{ineq-5.1},
\[
\sum_{P\in (u,v)\times T_i} \omega(P) \leq (3+\frac{3}{2})\times 3+ 3 \leq 16+\frac{1}{2}.
\]

{\bf Case B.} Both  $|\hht(\{u,y\},S)|$ and $|\hht(\{u,z\},S)|$ are smaller than 1.

 By \eqref{ineq-5.4}, $\omega(u,y)\leq 1+\frac{5}{2}$ and $\omega(u,z)\leq 1+\frac{5}{2}$. By \eqref{ineq-5.1} and $|\hht(\{v,x\},S)|\leq 3$,
\[
\sum_{P\in (u,v)\times T_i} \omega(P) \leq (3+\frac{3}{2})+(1+\frac{5}{2})\times 2+|\hht(\{v,x\},S)|+ 3 \leq 17+\frac{1}{2}.
\]
\end{proof}

Let
\[
\hg_2=\left\{P\in (u,v)\times T_1\colon |\hht(P,S)| = 2\right\} \mbox{ and }  \hg_1=\left\{P\in (u,v)\times T_1\colon |\hht(P,S)| \geq 1\right\}.
\]

The following fact is evident by $\nu(\hht)=2$.

\begin{fact}\label{fact-5.5}
$\hg_2\subset \hg_1$, $\hg_2$ is intersecting and $\hg_1$, $\hg_2$ are cross-intersecting.
\end{fact}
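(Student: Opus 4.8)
The statement to establish is Fact \ref{fact-5.5}: that $\hg_2\subset\hg_1$, that $\hg_2$ is intersecting, and that $\hg_1,\hg_2$ are cross-intersecting. The containment $\hg_2\subset\hg_1$ is immediate from the definitions, since $|\hht(P,S)|=2$ forces $|\hht(P,S)|\ge 1$. The heart of the matter is the (cross-)intersecting claims, and the plan is to derive both from a single disjointness argument using $\nu(\hht)=2$ together with the basic setup ($T_1,T_2\in\hht(\bar u,\bar v)$ disjoint, $R=T_1\cup T_2$, $S=R\cup\{u,v\}$).

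\textbf{Key step.} Suppose $P,P'\in(u,v)\times T_1$ are disjoint, with $P\in\hg_1$ and $P'\in\hg_2$. Disjointness of two pairs from $(u,v)\times T_1$ forces one of them to contain $u$ and the other $v$, say $P=(u,x)$ and $P'=(v,x')$ with $x\ne x'$ in $T_1$. Since $P\in\hg_1$, pick $\{w\}\in\hht(P,S)$, so $\{u,x,w\}\in\hht$ with $w\notin S$. Since $P'\in\hg_2$, $\hht(P',S)$ contains at least two elements, so we may pick $\{w'\}\in\hht(P',S)$ with $w'\ne w$ (possible because $|\hht(P',S)|\ge 2$ and only one value is forbidden); thus $\{v,x',w'\}\in\hht$ with $w'\notin S$ and $w'\ne w$. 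Now $\{u,x,w\},\{v,x',w'\}$ are disjoint, and both are disjoint from $T_2$ (as $w,w'\notin S\supset T_2$ and $u,v,x,x'\notin T_2$). Hence $\{u,x,w\},\{v,x',w'\},T_2$ form a matching of size $3$, contradicting $\nu(\hht)=2$. This simultaneously gives the cross-intersecting property of $\hg_1$ and $\hg_2$; taking $P\in\hg_2\subset\hg_1$ as well yields that $\hg_2$ is intersecting (note a ``self-intersection'' here just means no two members of $\hg_2$ are disjoint, and since all members share the same block $T_1$, two members of $(u,v)\times T_1$ are disjoint iff they are of the form $(u,x),(v,x')$ with $x\ne x'$, which is exactly the case ruled out).

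\textbf{Where the difficulty lies.} The only subtlety is the need to choose $w'$ distinct from $w$, which is precisely why the hypothesis on $\hg_2$ is ``$|\hht(P',S)|=2$'' rather than ``$\ge 1$''; if only $|\hht(P',S)|\ge 1$ were assumed, the single available $w'$ might coincide with $w$ and the matching argument would collapse. This is also why $\hg_1$ need not be intersecting in itself — only cross-intersecting with $\hg_2$. So the proof is genuinely short, but it is worth spelling out the ``exactly one forbidden value'' point to justify the asymmetry between $\hg_1$ and $\hg_2$. One should also record explicitly that disjoint members of $(u,v)\times T_1$ must split $\{u,v\}$, which is the combinatorial observation that makes $x\ne x'$ and the two extended edges land in the required form.
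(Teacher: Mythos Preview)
Your proof is correct and is precisely the argument the paper has in mind when it writes ``evident by $\nu(\hht)=2$'': you have simply spelled out the details of the three-edge matching $\{u,x,w\},\{v,x',w'\},T_2$ that yields the contradiction. Your observation that $|\hht(P',S)|\ge 2$ is exactly what allows the choice $w'\neq w$ (and hence why $\hg_1$ alone need not be intersecting) is the right explanation for the asymmetry in the statement.
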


\begin{lem}\label{lem-5.3}
If $\hg_2=\emptyset$, then
\begin{align}\label{ineq-5.3}
\sum_{P\in (u,v)\times T_1} \omega(P) \leq 18.
\end{align}
\end{lem}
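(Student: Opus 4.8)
We are in the situation where $\hg_2=\emptyset$, i.e.\ $|\hht(P,S)|\le 1$ for every $P\in (u,v)\times T_1$. Combined with Lemma~\ref{lem-5.2}, we may also assume that no $P_0$ has $|\hht(P_0,S)|=3$ (otherwise we would already be done), so in fact $|\hht(P,S)|\le 1$ for all six pairs $P\in (u,v)\times T_1$. The plan is to bound $\sum_{P}\omega(P)$ over these six pairs by $18$, using the decomposition $\omega(P)=|\hht(P,S)|+\omega_{in}(P)$ from Fact~\ref{fact-5.3} together with $\omega_{in}(P)\le \tfrac12|\hht_2(P)|$ and $|\hht_2(P)|\le 6-|\hht(P,S)|$ from \eqref{ineq-5.4}.

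**The crude bound and where it falls short.** Naively, $\omega(P)\le 1+\tfrac12\cdot 5=\tfrac72$ for each of the six pairs gives $\sum\omega(P)\le 21$, which is too weak by $3$. So the real work is to save $3$ units across the six pairs. First I would dispose of the easy regime: if $\hg_1=\emptyset$, i.e.\ $|\hht(P,S)|=0$ for all six pairs, then each $\omega(P)\le \tfrac12|\hht_2(P)|\le 3$, and in fact the ``inside'' contributions interact. Here I would use that for a $2$-matching $(u,x),(v,y)$ in $(u,v)\times T_1$ with $z=T_1\setminus\{x,y\}$, the triples of $\hht$ inside $S$ through these pairs cannot be too numerous: Corollary~\ref{cor-4.2}-type overlapping arguments (via $\nu(\hht)=2$) on the three ways of pairing up vertices should give $\sum_{P\in(u,v)\times T_1}\tfrac12|\hht_2(P)|\le 18$ comfortably, since $\bigl|\binom{S}{3}\bigr|$-worth of edges through $(u,v)\times T_1$ is limited once we recall $T_1,T_2\in\hht$ are themselves edges and $\hht(\bar{u},\bar{v})$ overlaps across the relevant partitions.

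**The main case.** The harder case is $\hg_1\ne\emptyset$: say $(u,x)\in\hg_1$, so $\hht(\{u,x\},S)=\{\{t\}\}$ for some $t\notin S$. By Fact~\ref{fact-5.5}, $\hg_1$ and $\hg_2=\emptyset$ are vacuously cross-intersecting, so this imposes little directly; instead I would exploit Fact~\ref{fact-5.2}: since $|\hht(\{u,x\},S)|\ge 1$, we get $T_1\cup\{u,v\}\setminus\{u,x\}=\{v,y,z\}\notin\hht$, which kills one potential inside-edge and caps $\omega(v,y)+\omega(v,z)$ contributions. Then I would run a cross-intersecting analysis between $\hht(\{u,x\},S)$ and $\hht(\{v,y\},S)$, $\hht(\{v,z\},S)$ (all cross-intersecting since $x,y,z\in T_1$), forcing at least one of $\hht(\{v,y\},S)$, $\hht(\{v,z\},S)$ to be empty or to coincide with $\{t\}$; either way one saves $\tfrac12$ or more on each such pair. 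Iterating this bookkeeping over the at most three pairs $(u,\cdot)$ and three pairs $(v,\cdot)$, together with the ``$\{v,y,z\}\notin\hht$'' savings and the bound $|\hht(P)|\le 6$, should bring the total down to $18$.

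**Expected obstacle.** The main obstacle is the case analysis on the $\hg_1$ side: tracking exactly which of the six pairs carry an outside edge, which inside triples of $\binom{S}{3}$ survive, and verifying that the savings from Fact~\ref{fact-5.2} and from cross-intersectingness never ``double-count'' a vertex or edge. I would organize this by symmetry on which vertex of $T_1$ plays the role of $x$, and by how many of $(u,x),(u,y),(u,z)$ lie in $\hg_1$; the tightest subcase is likely when exactly one pair through $u$ and one through $v$ lie in $\hg_1$ but they are ``aligned'' so that the cross-intersecting saving and the Fact~\ref{fact-5.2} saving overlap, and one must check $3+\tfrac92+\tfrac92+\ldots$ still comes in at $18$ rather than $18+\tfrac12$.
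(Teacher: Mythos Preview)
Your proposal is an incomplete sketch, and the case analysis you are gearing up for is unnecessary. The paper dispatches the lemma in three lines with no case split; you are missing one clean counting observation.

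Since $\hg_2=\emptyset$ (and invoking Lemma~\ref{lem-5.2} to rule out the value $3$), every $P\in(u,v)\times T_1$ has $|\hht(P,S)|\le 1$, so $\sum_P|\hht(P,S)|=|\hg_1|$. For the inside part, split each $\hht_2(P)$ according to whether the third vertex $w$ lies in $T_2\cup(\{u,v\}\setminus P)$ or in $T_1\setminus P$. The first kind contributes at most $\tfrac12\cdot 4=2$ per pair, hence at most $12$ in total. For the second kind, observe that every triple $T\in\hht$ of the form $\{a,x',x''\}$ with $a\in\{u,v\}$ and $\{x',x''\}\in\binom{T_1}{2}$ contributes $\tfrac12$ to each of $\omega_{in}(a,x')$ and $\omega_{in}(a,x'')$, i.e.\ exactly $1$ to $\sum_P\omega_{in}(P)$. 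There are six such potential triples, and the map $P\mapsto (T_1\cup\{u,v\})\setminus P$ is a bijection between the six pairs and these six triples; Fact~\ref{fact-5.2} says that each $P\in\hg_1$ forbids its image. Hence at most $6-|\hg_1|$ of these triples lie in $\hht$, and
\[
\sum_{P\in(u,v)\times T_1}\omega(P)\ \le\ |\hg_1|\;+\;12\;+\;(6-|\hg_1|)\ =\ 18.
\]

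Your ``easy regime'' $\hg_1=\emptyset$ is already finished by $\omega(P)\le 3$; the overlapping arguments you sketch there are superfluous. In the ``main case'', the cross-intersecting analysis between $\hht(\{u,x\},S)$ and $\hht(\{v,y\},S)$ that you propose is valid but is the engine of Lemma~\ref{lem-5.1}, not of this lemma. The point is that the savings from Fact~\ref{fact-5.2} and the outside contribution $|\hg_1|$ cancel \emph{exactly} once you count the two-in-$T_1$ triples globally rather than pair by pair, so no bookkeeping or subcase tracking is required.
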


\begin{proof}
By Lemma \ref{lem-5.2}, we may assume that $|\hht(P,S)|\leq 1$ for all $P\in (u,v)\times T_1$. By Fact \ref{fact-5.2}, we infer that
\begin{align*}
\sum_{P\in (u,v)\times T_1} \omega_{in}(P) &= 6\times  \frac{|T_2|+|\{u,v\}\setminus P|}{2}+|\{T\in \hht\colon |T\cap \{u,v\}|=1,\ |T\cap T_1|=2\}|\\[2pt]
&\leq  6\times  \frac{4}{2}+6-|\hg_1|\\[2pt]
&= 18-|\hg_1|.
\end{align*}
Thus,
\[
\sum_{P\in (u,v)\times T_1} \omega(P) \leq  |\hg_1|+18-|\hg_1|=18.
\]
\end{proof}

\begin{lem}\label{lem-3.2}
For $i=1,2$,
\begin{align}\label{ineq-3.7}
\sum_{P\in (u,v)\times T_i} \omega(P) \leq 18.
\end{align}
\end{lem}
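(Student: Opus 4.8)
The statement to prove is that for $i=1,2$, $\sum_{P\in (u,v)\times T_i}\omega(P)\le 18$. The plan is to fix $i$ (say $i=1$) and split into cases according to the maximum value of $|\hht(P,S)|$ over $P\in (u,v)\times T_1$. Thanks to Lemma \ref{lem-5.2} and Lemma \ref{lem-5.3}, two of the three regimes are already settled: if some $P_0\in (u,v)\times T_1$ has $|\hht(P_0,S)|=3$, inequality \eqref{ineq-3.7} follows from \eqref{ineq-5.2}; and if $\hg_2=\emptyset$, i.e.\ $|\hht(P,S)|\le 1$ for all such $P$, it follows from \eqref{ineq-5.3}. So the only remaining case is: $\hg_2\ne\emptyset$ and $|\hht(P,S)|\le 2$ for all $P\in (u,v)\times T_1$, i.e.\ the maximum of $|\hht(P,S)|$ is exactly $2$.

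In that remaining case I would exploit the structure of $\hg_1$ and $\hg_2$ given by Fact \ref{fact-5.5}: $\hg_2$ is intersecting and $\hg_1,\hg_2$ are cross-intersecting inside the six-element ground set $(u,v)\times T_1$. Since $\hg_2$ is a nonempty intersecting subfamily of $(u,v)\times T_1\cong K_{2,3}$, it is a star; pick a vertex $q$ covering $\hg_2$ and note $|\hg_2|\le 2$ (the degree of any vertex in $K_{2,3}$ is at most $3$, but cross-intersection with a further edge of $\hg_1$ cuts this down). The key quantitative input is Fact \ref{fact-5.4}: $\omega(P)\le |\hht(P,S)|+\tfrac{6-|\hht(P,S)|}{2}=3+\tfrac{|\hht(P,S)|}{2}$, so $\omega(P)\le 4$ when $|\hht(P,S)|=2$, $\omega(P)\le \tfrac72$ when $|\hht(P,S)|=1$, and $\omega(P)\le 3$ when $|\hht(P,S)|=0$. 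Writing $a=|\hg_2|$, $b=|\hg_1\setminus\hg_2|$ for the number of edges $P$ with $|\hht(P,S)|=2$ resp.\ $=1$, and $6-a-b$ for the number with $|\hht(P,S)|=0$, a naive bound gives $\sum\omega(P)\le 4a+\tfrac72 b+3(6-a-b)=18+a+\tfrac12 b$, which is not yet good enough; the saving must come from the cross-intersecting constraints forcing most edges to have $|\hht(P,S)|=0$, together with the refinement in Fact \ref{fact-5.1} (when $|\hht(P,S)|$ is large there are disjoint sets $V_1,V_2$ in $\hht$ avoiding $P\cup\{u,v\}$, which kills the $\hht(P',S)$ for the two edges $P'$ of the $2$-matching through the same $T_1$-vertices, exactly as in the proof of Lemma \ref{lem-5.2}).

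Concretely, I would argue: if $a\ge 1$, fix $P_0\in\hg_2$ with $|\hht(P_0,S)|=2$; since $\hg_1$ is cross-intersecting with $\hg_2$, every $P\in\hg_1$ meets $P_0$, so $\hg_1$ lies in the "cross'' of $P_0$ inside $K_{2,3}$, forcing $b$ to be small; moreover using Fact \ref{fact-5.1}-type arguments on the $2$-matchings of $(u,v)\times T_1$ through the two $T_1$-endpoints not in $P_0$ shows those edges contribute $|\hht(P,S)|=0$ and a small internal weight, so that the total over each such $2$-matching is at most the bound $7$ already obtained in Lemma \ref{lem-5.1} Case A/B. Partitioning $(u,v)\times T_1$ into its three $2$-matchings and bounding each by $\le 7$ would give $\le 21$, which is too weak, so the point is that at least one of the three matchings must come in under $6$ using that $|\hht(P,S)|\le 2$ everywhere now — this is where the structural input on $\hg_2$ being a nonempty star is used to find a matching whose combined weight is at most $4+\tfrac52+\tfrac52=9$ split across two matchings...

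\textbf{Main obstacle.} The delicate part is the bookkeeping in this last case: squeezing the three $2$-matchings down from the easy bound of $7$ each (total $21$) to an overall $18$, which forces an average saving of $1$ per matching. The honest route is probably to not use the matching partition at all here but instead argue globally on the six edges of $(u,v)\times T_1$: use that $\hg_2$ is a nonempty star to locate a $T_1$-vertex $x$ with $|\hht(\{u,x\},S)|$ or $|\hht(\{v,x\},S)|$ equal to $2$, then invoke a Fact \ref{fact-5.3}-style lemma (with "$=2$'' in place of "$=3$'') to bound $\omega_{in}(v,x)+\omega(v,y)+\omega(v,z)$, paired with Fact \ref{fact-5.4} on the three edges through $u$, and add. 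I expect the proof to mirror Lemma \ref{lem-5.2} almost verbatim, with the constant $3$ in Fact \ref{fact-5.3} replaced by a slightly larger constant and the $(3+\tfrac32)$ terms replaced by $(2+2)$, landing exactly on $18$.
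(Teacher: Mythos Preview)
Your reduction is correct and matches the paper exactly: after invoking Lemmas~\ref{lem-5.2} and~\ref{lem-5.3}, one may assume $\hg_2\neq\emptyset$ and $|\hht(P,S)|\le 2$ for every $P\in(u,v)\times T_1$. Where your proposal goes wrong is in the execution of this remaining case.

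First, a small but real error: your claim that $|\hg_2|\le 2$ is false. The bipartite graph $(u,v)\times T_1$ is $K_{2,3}$, and a star at $u$ (or $v$) has three edges; nothing in Fact~\ref{fact-5.5} rules out $\hg_2=\{(u,x),(u,y),(u,z)\}$, and indeed the paper treats $|\hg_2|=3$ as its Case~1.

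Second, and more seriously, the mechanism you propose for the saving does not exist. You want to ``invoke a Fact~\ref{fact-5.3}-style lemma with $=2$ in place of $=3$'', but the proof of Fact~\ref{fact-5.3} rests entirely on the specific $V_1,V_2\in\hht$ supplied by Fact~\ref{fact-5.1}, and Fact~\ref{fact-5.1} is only available when $|\hht(P,S)|=3$. With $|\hht(P,S)|=2$ there is no such structural input, so ``mirror Lemma~\ref{lem-5.2} almost verbatim'' is not a viable plan.

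The paper's actual saving comes from a different source: Fact~\ref{fact-5.2}, which says that whenever $P\in\hg_1$ the triple $(T_1\cup\{u,v\})\setminus P$ is excluded from $\hht$. Each such exclusion trims $\tfrac12$ from $\omega_{in}$ of the two pairs contained in that triple. Combined with the cross-intersecting structure of $\hg_1,\hg_2$ (which forces $\hg_1$ into the ``cross'' of any $P_0\in\hg_2$, hence many $\hht(P,S)=\emptyset$), this is enough, but it requires a genuine case split on $|\hg_2|\in\{1,2,3\}$ and, within $|\hg_2|=2$ and $|\hg_2|=1$, a further split according to where the star centre lies and which edges of the cross belong to $\hg_1$. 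Your global bound $18+a+\tfrac12 b$ is the right starting point; the missing step is to verify that in every sub-case the Fact~\ref{fact-5.2} exclusions recover exactly $a+\tfrac12 b$.
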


\begin{proof}
By symmetry it is sufficient to prove \eqref{ineq-3.7} for $i=1$. By Fact \ref{fact-5.5}, $\hg_2$ is a star. By Lemmas \ref{lem-5.1} and \ref{lem-5.2}, we may assume that $|\hht(P,S)|\leq 2$ for all $P\in (u,v)\times T_1$ and $\hg_2\neq \emptyset$.
 Let $T_1=\{x,y,z\}$ and  distinguish three cases.

\vspace{3pt}
{\bf Case 1. } $|\hg_2|=3$.
\vspace{3pt}

Without loss of generality assume $\hg_2=\{(u,x),(u,y),(u,z)\}$. Using Fact \ref{fact-5.5} we infer that $\hg_1=\hg_2$, that is,
\[
\hht(\{v,x\},S) = \hht(\{v,y\},S)=\hht(\{v,z\},S)=\emptyset.
\]
Moreover, by Fact \ref{fact-5.2}, $(v,x,y),(v,x,z), (v,y,z)\notin \hht$. Thus,
\[
\sum_{P\in (u,v)\times T_1} \omega(P) \leq  (2+\frac{4}{2})\times 3+ \frac{4}{2}\times 3 = 18.
\]

\vspace{3pt}
{\bf Case 2. } $|\hg_2|=2$.
\vspace{3pt}

\vspace{3pt}
{\bf Subcase 2.1.} $\hg_2=\{(u,x),(u,y)\}$.
\vspace{3pt}

Since $\hg_1,\hg_2$ are cross-intersecting, we infer that $\hg_1\subset \{(5,x),(5,y),(5,z)\}$. Thus,
\[
\hht(\{v,x\},S) = \hht(\{v,y\},S)=\hht(\{v,z\},S)=\emptyset.
\]
Then by Fact \ref{fact-5.2} $(v,x,z)$, $(v,y,z)\notin \hht$. If $|\hht(\{u,z\},S)|=1$, then by Fact \ref{fact-5.2} we also have $(v,x,y)\notin \hht$. It follows that
\[
\sum_{P\in (u,v)\times T_1} \omega(P) \leq (2+\frac{4}{2})\times 2+ (1+\frac{5}{2})+\frac{4}{2}\times 3 = 17+\frac{1}{2}.
\]
If $\hht(\{u,z\},S)=\emptyset$, then
\[
\sum_{P\in (u,v)\times T_1} \omega(P) \leq  (2+\frac{4}{2})\times 2+ \frac{6}{2}+ \frac{5}{2}\times 2+\frac{4}{2} = 18.
\]

\vspace{3pt}
{\bf Subcase 2.2.} $\hg_2=\{(u,x),(v,x)\}$.
\vspace{3pt}

Since $\hg_1,\hg_2$ are cross-intersecting, we infer that $\hg_1=\hg_2$. It follows that
\[
\hht(\{u,y\},S) = \hht(\{u,z\},S)=\hht(\{v,y\},S)=\hht(\{v,z\},S)=\emptyset.
\]
By Fact \ref{fact-5.2} we also have $(u,y,z),(v,y,z)\notin \hht$. Thus,
\[
\sum_{P\in (u,v)\times T_1} \omega(P) \leq  (2+\frac{4}{2})\times 2+ \frac{5}{2}\times 4 = 18.
\]

\vspace{3pt}
{\bf Case 3. } $|\hg_2|=1$.
\vspace{3pt}

By symmetry we may assume $\hg_2=\{(u,x)\}$. Then by $\nu(\hht)\leq 2$ we infer that $|\hg_1|\leq 4$ and $\hht(\{v,y\},S)=\hht(\{v,z\},S)=\emptyset$.

If $\{(u,x),(u,y),(u,z)\}\subset \hg_1$, then $(v,x,y),(v,y,z),(v,x,z)\notin \hht$. It follows that
\[
\sum_{P\in (u,v)\times T_1} \omega(P) \leq  (2+\frac{4}{2})+(1+\frac{5}{2})\times 2+1+ \frac{4}{2}\times 3 = 18.
\]
If $\hg_1=\{(u,x),(u,y),(v,x)\}$, then $(v,y,z),(v,x,z),(u,y,z)\notin \hht$. Note that $(u,y,z)\notin \hht$ implies $w(u,z)\leq \frac{5}{2}$. It follows that
\[
\sum_{P\in (u,v)\times T_1} \omega(P) \leq  (2+\frac{4}{2})+(1+\frac{5}{2})+\frac{5}{2}+(1+\frac{5}{2})+\frac{5}{2}+\frac{4}{2}= 18.
\]
If $\hg_1=\{(u,x),(u,y)\}$, then $(v,y,z),(v,x,z)\notin \hht$. It follows that
\[
\sum_{P\in (u,v)\times T_1} \omega(P) \leq  (2+\frac{4}{2})+(1+\frac{5}{2})+\frac{6}{2}+ \frac{5}{2}\times 2 +\frac{4}{2}= 17+\frac{1}{2}.
\]
If $\hg_1=\{(u,x),(v,x)\}$, then $(u,y,z),(v,y,z)\notin \hht$. It follows that
\[
\sum_{P\in (u,v)\times T_1} \omega(P) \leq   (2+\frac{4}{2})+(1+\frac{5}{2})+ \frac{5}{2}\times 4= 17+\frac{1}{2}.
\]
If $\hg_1=\{(u,x)\}$, then $(v,y,z)\notin \hht$. It follows that
\[
\sum_{P\in (u,v)\times T_1} \omega(P) \leq  (2+\frac{4}{2})+ \frac{6}{2}\times 3+\frac{5}{2}\times 2 = 18.
\]
\end{proof}

\begin{proof}[Proof of Lemma \ref{lem-main}]
By Lemma \ref{lem-3.2} and \eqref{ineq-key8} we conclude that
\[
|\hht_{uv}| =\sum_{P\in (u,v)\times T_1} \omega(P)+\sum_{P\in (u,v)\times T_2} \omega(P) \leq 18\times 2 =36.
\]
\end{proof}

\begin{lem}\label{lem-3.3}
 Suppose that $\binom{[4]}{3}\subset \hht$ and $(6,7,8),(6,9,10),(7,9,w)\in \hht$ with some $w\notin [6]$. Then
 \[
|\hht_{56}|\leq 28.
 \]
\end{lem}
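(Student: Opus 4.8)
The plan is to partition $\hht_{56}$ according to how an edge meets $\{5,6\}$, the set $[4]$, and $\{7,9,w\}$, to bound each block separately, and to note that the only delicate interaction is between edges through $5$ and edges through $6$.

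\emph{Classification.} Using $\nu(\hht)=2$ together with the fact that every triple in $\binom{[4]}{3}$ is disjoint from each of $(7,9,w)$, $(6,7,8)$, $(6,9,10)$, one checks: (i) if $(5,6,x)\in\hht$ then $x\in\{7,9,w\}$ — otherwise, picking $T'\in\binom{[4]}{3}$ with $x\notin T'$, the edges $(5,6,x),T',(7,9,w)$ form a $3$-matching; (ii) if $(5,a,b)\in\hht$ with $6\notin\{a,b\}$, then either $\{a,b\}\subset[4]$, or one of $a,b$ lies in $\{7,8\}$ and the other in $\{9,10\}$ — if $\{a,b\}\not\subset[4]$, take $T'\in\binom{[4]}{3}$ disjoint from $\{a,b\}$ and use the potential $3$-matchings $(5,a,b),T',(6,7,8)$ and $(5,a,b),T',(6,9,10)$ to force $\{a,b\}$ to meet both $\{7,8\}$ and $\{9,10\}$ (so $|\{a,b\}\cap[4]|=1$ cannot occur); (iii) if $(6,a,b)\in\hht$ with $5\notin\{a,b\}$, then $\{a,b\}\subset[4]$ or $\{a,b\}\cap\{7,9,w\}\neq\emptyset$ — the same argument with $(7,9,w)$ as the third edge. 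Hence $\hht_{56}$ is the disjoint union of $\ha=\{(5,6,x)\in\hht\}$, the set $\hb$ of edges $(5,i,j)\in\hht$ with $i,j\in[4]$, the set $\mathcal D$ of edges $(6,i,j)\in\hht$ with $i,j\in[4]$, $\hc=\{(5,c,d)\in\hht\colon c\in\{7,8\},\ d\in\{9,10\}\}$, and $\he=\{(6,a,b)\in\hht\colon 5\notin\{a,b\},\ \{a,b\}\cap\{7,9,w\}\neq\emptyset\}$.

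\emph{Crude bounds.} By (i), $|\ha|\le 3$, and trivially $|\hc|\le 4$. The key inequality is $|\hb|+|\mathcal D|\le 6$: split $\binom{[4]}{2}$ into the three complementary pairs $\{P,Q\}$ with $Q=[4]\setminus P$; for each such pair both $(5,P),(6,Q),(7,9,w)$ and $(5,Q),(6,P),(7,9,w)$ would be $3$-matchings, and these two forbidden pairs form a perfect matching on the four sets $(5,P),(6,P),(5,Q),(6,Q)$, so at most two of them lie in $\hht$; summing over the three complementary pairs gives $|\hb|+|\mathcal D|\le 6$. Finally, by Corollary \ref{cor-1} each of $7,9,w$ is contained in at most six edges $(6,g,\cdot)\in\hht$, so $|\he|\le 18$. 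Thus if $\hc=\emptyset$ we already obtain $|\hht_{56}|\le 3+6+0+18=27$.

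\emph{The remaining case $\hc\neq\emptyset$.} Here $|\he|$ drops sharply. Each member of $\hc$ kills, via a $3$-matching with a triple of $[4]$, many edges through $6$ meeting $\{7,9,w\}$: for example if $(5,8,9)\in\hht$ then $(6,i,7)\notin\hht$ for every $i\in[4]$ (else $(6,i,7),[4]\setminus\{i\},(5,8,9)$ is a $3$-matching), and the only edges $(6,7,c)\in\hht$ with $c\notin[4]\cup\{5,6\}$ are $(6,7,8),(6,7,9)$ (else $(6,7,c),T',(5,8,9)$ is a $3$-matching for a triple $T'\subset[4]$), so at most two edges of $\he$ pass through $7$; analogous statements hold for $(5,8,10)$, $(5,7,9)$, $(5,7,10)$. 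Carrying this out, already a single edge of $\hc$ forces $|\he|$ down to about $10$, hence $|\hht_{56}|\le 3+6+1+10=20$, and each further edge of $\hc$ only improves the estimate; so $|\hht_{56}|\le 28$ in all cases. I expect the main obstacle to be exactly this last step — the bookkeeping of precisely which edges of $\he$ and which edges $(6,i,g)$ with $i\in[4]$, $g\in\{7,9,w\}$ are excluded by each of the four possible members of $\hc$ — together with the mildly exceptional values $w\in\{8,10\}$ (when $(7,9,w)$ shares two vertices with $(6,7,8)$ or $(6,9,10)$), which must be run through separately but by the same method.
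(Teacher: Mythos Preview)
Your classification of $\hht_{56}$ into $\ha,\hb,\hc,\mathcal D,\he$ is correct, and the bounds $|\ha|\le 3$, $|\hc|\le 4$, $|\hb|+|\mathcal D|\le 6$, $|\he|\le 18$ are all valid. This is a genuinely different route from the paper's proof, which instead reuses the weight-function machinery of Section~5: with $T_1=(1,2,3)$, $T_2=(7,9,w)$ it shows $\sum_{P\in(5,6)\times T_1}\omega(P)\le 10\tfrac12$ from the hypothesis $\binom{[4]}{3}\subset\hht$, and then quotes the general Lemma~\ref{lem-3.2} for $\sum_{P\in(5,6)\times T_2}\omega(P)\le 18$, obtaining $28\tfrac12$ in total. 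Your argument is more elementary and self-contained --- it avoids the lengthy case analysis underlying Lemma~\ref{lem-3.2} --- while the paper's approach has the advantage of reusing already-established machinery.

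The only real gap in your proposal is the case $\hc\neq\emptyset$, which you leave as ``bookkeeping'' to be checked (including the exceptional values $w\in\{8,10\}$). The case analysis you sketch does go through, but it is unnecessary: a one-line sharpening of your own crude bound removes the case split entirely. For each $g\in\{7,9,w\}$ write $\he_g=\{(6,g,x)\in\he\}$; the edge $(5,6,g)$, if present, is one of the at most six edges of $\hht$ through $\{6,g\}$ counted by Corollary~\ref{cor-1}, and the remaining ones are exactly $\he_g$. Hence
\[
|\ha|+\sum_{g\in\{7,9,w\}}|\he_g|=\sum_{g\in\{7,9,w\}}|\hht(\{6,g\})|\le 18,
\]
and since $\he=\he_7\cup\he_9\cup\he_w$ this gives $|\ha|+|\he|\le 18$. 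Combined with $|\hb|+|\mathcal D|\le 6$ you get
\[
|\hht_{56}|=|\ha|+|\he|+|\hb|+|\mathcal D|+|\hc|\le 18+6+|\hc|\le 28
\]
uniformly, with no separate treatment of $\hc\neq\emptyset$ or of $w\in\{8,10\}$ needed.
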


\begin{proof}
Let $T_1=(1,2,3)$, $T_2=(7,9,w)$ and $S=T_1\cup T_2\cup \{5,6\}$.
By $\nu(\hht)\leq 2$ and $\binom{[4]}{3}\subset \hht$, we infer that $\hht(P, S)\subset \{\{4\}\}$ for all $P\in (5,6)\times T_1$ and $(5,6,i)\notin \hht$ for all $i\in T_1$. Thus, $\{P\in(5,6)\times T_1\colon |\hht(P,S)|\geq 2\}=\emptyset$.

 Note that $(6,7,8),(6,9,10)\in \hht$. If $(5,i,j)\in \hht$ with $i\in T_1$, $j\in T_2$ then either $(5,i,j), [4]\setminus \{i\}, (6,7,8)$ or $(5,i,j), [4]\setminus \{i\}, (6,9,10)$ form a matching of size 3, contradicting $\nu(\hht)\leq 2$. Thus $(5,i,j)\notin \hht$ for all $i\in T_1$, $j\in T_2$. Therefore, $\hht(\{5,i\},S)\subset T_1\setminus \{i\}$.

Recall that
\[
\hg_1=\{P\in(5,6)\times T_1\colon |\hht(P,S)|= 1\}.
\]
By Fact \ref{fact-5.2},
\[
\sum_{P\in (5,6)\times T_1} \omega(P) =|\hg_1|+\sum_{P\in (5,6)\times T_1} \omega_{in}(P) \leq |\hg_1| +(6-|\hg_1|) + \frac{3}{2}\times 3 = 10+\frac{1}{2}.
\]
By Lemma \ref{lem-3.2}, we also have
\[
\sum_{P\in (5,6)\times T_2} \omega(P) \leq 18.
\]
Thus we conclude that
\[
|\hht_{56}|= \sum_{P\in (5,6)\times T_1} \omega(P)+\sum_{P\in (5,6)\times T_2} \omega(P) \leq 28+\frac{1}{2}.
\]
\end{proof}

\begin{lem}\label{lem-3.4}
 Suppose that $|\hht\cap\binom{[4]}{3}|\geq 3$ and $(6,7,8),(6,9,10),(7,9,w)\in \hht$ with some $w\notin [6]$. Then
 \[
|\hht_{56}|\leq 34.
 \]
\end{lem}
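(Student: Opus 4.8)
The plan is to reduce the statement to Lemma \ref{lem-3.3}, handling separately the case where $\binom{[4]}{3}\subset\hht$ (already done) and the case where exactly one triple of $\binom{[4]}{3}$, say $(1,2,3)$, is missing from $\hht$. So assume $(1,2,3)\notin\hht$ but $(1,2,4),(1,3,4),(2,3,4)\in\hht$, together with $(6,7,8),(6,9,10),(7,9,w)\in\hht$, $w\notin[6]$. As in Lemma \ref{lem-3.3}, set $T_1=(7,9,w)$, $T_2=(1,2,4)$ (this choice of $T_2$ exploits that $[4]\setminus\{i\}\in\hht$ for $i\in\{1,2,3\}$), $R=T_1\cup T_2$, $S=R\cup\{5,6\}$, and use the weighting $\omega$ from Section 5. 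By \eqref{ineq-key8} it suffices to bound $\sum_{P\in(5,6)\times T_1}\omega(P)$ and $\sum_{P\in(5,6)\times T_2}\omega(P)$ separately. Lemma \ref{lem-3.2} gives $\sum_{P\in(5,6)\times T_1}\omega(P)\le 18$ for free, so the real work is to show $\sum_{P\in(5,6)\times T_2}\omega(P)\le 16$, which would then yield $|\hht_{56}|\le 34$.

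The key structural input, exactly parallel to Lemma \ref{lem-3.3}, is that for $P\in(5,6)\times T_2$ with $P$ meeting $\{1,2,4\}$ in one of $1,2,4$, the three triples $[4]\setminus\{i\}$ ($i\in\{1,2,3\}$) belong to $\hht$ and force, via $\nu(\hht)=2$ and the presence of the disjoint sunflower $(6,7,8),(6,9,10)$, that most candidate edges through $P$ outside $S$ are forbidden. Concretely I would first show $(5,6,i)\notin\hht$ for $i\in\{1,2,4\}$ (else pair $(5,6,i)$ with $[4]\setminus\{i\}$ and one of $(6,7,8),(6,9,10)$ — wait, these share $6$; instead pair with $T_1=(7,9,w)$, which is disjoint from $\{5,6\}\cup[4]$, to get a $3$-matching). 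Similarly show $(5,i,j)\notin\hht$ for $i\in\{1,2,4\}$, $j\in T_1$, using $[4]\setminus\{i\}$ and one of $(6,7,8),(6,9,10)$. This pins $\hht(\{5,i\},S)\subset T_2\setminus\{i\}$ and bounds $|\hht(P,S)|\le 2$ for $P\in(5,6)\times T_2$, and moreover $\{P\in(5,6)\times T_2: |\hht(P,S)|=2\}$ is small. Then, writing $\hg_1'=\{P\in(5,6)\times T_2:|\hht(P,S)|\ge1\}$ and applying Fact \ref{fact-5.2} (an internal edge through $P$ kills $T_2\cup\{5,6\}\setminus P$ from $\hht$) exactly as in Lemma \ref{lem-3.3}, one gets $\sum_{P\in(5,6)\times T_2}\omega_{in}(P)\le (6-|\hg_1'|)+\tfrac32\cdot 3$, hence $\sum_{P\in(5,6)\times T_2}\omega(P)\le |\hg_1'|+(6-|\hg_1'|)+\tfrac92 = \tfrac{21}{2}$, comfortably below $16$.

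The main obstacle is that the reduction to "$(1,2,3)$ the unique missing triple" is not automatic: $|\hht\cap\binom{[4]}{3}|\ge3$ could mean $\binom{[4]}{3}\subset\hht$ (Lemma \ref{lem-3.3}) or exactly three of the four triples present. In the latter case one must check that the argument of Lemma \ref{lem-3.3} is robust to replacing "$\binom{[4]}{3}\subset\hht$" by "$[4]\setminus\{i\}\in\hht$ for the three relevant $i$", which is exactly what the choice $T_2\subset[4]$ arranges — the key deductions only ever used the sets $[4]\setminus\{i\}$, never $(1,2,3)$ itself. A secondary subtlety is the interaction with $T_1=(7,9,w)$: since $w\notin[6]$ but $w$ could coincide with vertex $5$ is excluded ($5\in[6]$), and $T_1$ is genuinely disjoint from $\{5,6\}\cup[4]$, so all the $3$-matching arguments go through. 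With these points checked the $34$ bound follows; the slack (we actually get $\le 18 + \tfrac{21}{2} < 29$) is consistent with the weaker, cleanly-stated bound in the lemma, and the extra room presumably lets one absorb any of the degenerate sub-cases without fine-tuning constants.
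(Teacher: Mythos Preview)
Your reduction to Lemma \ref{lem-3.3} in the case $\binom{[4]}{3}\subset\hht$ is fine, and swapping the roles of $T_1,T_2$ relative to the paper is harmless. The gap is in the other case. With $(1,2,3)\notin\hht$ and $T_2=(1,2,4)$, your claim that ``the key deductions only ever used the sets $[4]\setminus\{i\}$, never $(1,2,3)$ itself'' is false on its face: for $i\in T_2=\{1,2,4\}$ the sets $[4]\setminus\{i\}$ are $(2,3,4),(1,3,4),(1,2,3)$, and the last of these is precisely the missing triple. Concretely, you cannot rule out $(5,6,4)\in\hht$ (the only triple in $\binom{[4]}{3}$ disjoint from $\{4\}$ is $(1,2,3)$), nor can you rule out $(5,4,j)\in\hht$ for $j\in T_1$, nor can you bound $|\hht(\{5,4\},S)|$ or $|\hht(\{6,4\},S)|$ by~$1$. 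Every triple of $\hht\cap\binom{[4]}{3}$ contains the vertex $4$, so no choice of $T_2\subset[4]$ avoids this bad vertex. Consequently your inequality $\sum_{P}\omega_{in}(P)\le (6-|\hg_1'|)+\tfrac{3}{2}\cdot 3$ and the resulting bound $\tfrac{21}{2}$ are unjustified; several of the terms you discarded (those through the bad vertex) may well be present.

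The paper's proof accepts this asymmetry rather than trying to argue uniformly over $T_2$. Taking (in its labeling) $T_1=(1,2,3)$ with $(2,3,4)$ the possibly missing triple, the two ``good'' vertices $2,3$ behave exactly as in Lemma \ref{lem-3.3}, while the ``bad'' vertex $1$ is handled by observing $\hg_2:=\{P\in(5,6)\times T_1:|\hht(P,S)|\ge 2\}\subset\{(5,1),(6,1)\}$ and then doing a short case split on $|\hg_2|\in\{0,1,2\}$, using Fact \ref{fact-5.1} when $|\hht(P,S)|=3$. This yields $\sum_{P\in(5,6)\times T_1}\omega(P)\le 16$ (not $\tfrac{21}{2}$), which together with Lemma \ref{lem-3.2} gives $34$. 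Your sketch is missing exactly this case analysis at the bad vertex.
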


\begin{proof}
Without loss of generality assume $(1,2,3),(1,2,4),(1,3,4)\in \hht$. Let $T_1=(1,2,3)$, $T_2=(7,9,w)$ and $S=T_1\cup T_2\cup \{5,6\}$.
By $\nu(\hht)\leq 2$, we infer that $\hht(P, T_1)\subset \{\{4\}\}$ for all $P\in (5,6)\times (2,3)$ and $(5,6,i)\notin \hht$ for $i=2,3$. Let
\[
\hg_2=\left\{P\in (u,v)\times T_1\colon |\hht(P,S)| \geq 2\right\}.
\]
Then $\hg_2\subset \{(5,1),(6,1)\}$.

Recall that $(6,7,8),(6,9,10)\in \hht$. If $(5,i,j)\in \hht$ for $i\in \{2,3\}$ and $j\in T_2$, then either $(5,i,j), [4]\setminus \{i\}, (6,7,8)$ or $(5,i,j), [4]\setminus \{i\}, (6,9,10)$ form a matching of size 3, contradicting $\nu(\hht)\leq 2$. Thus $(5,i,j)\notin \hht$ for all $i\in (2,3)$, $j\in T_2$ and $\hht(\{5,2\},S)\subset [4]$, $\hht(\{5,3\},S)\subset  [4]$ follow. Therefore,
\begin{align}\label{ineq-5.5}
\sum_{P\in (5,6)\times T_1} \omega_{in}(P) \leq  (6-|\hg_1|)+ \sum_{P\in (5,6)\times T_1\setminus \{(5,2),(5,3)\}} |\hht(P)\cap (T_2\cup \{5,6\})|.
\end{align}

Note that  if $P\in \hg_1$ then by Fact \ref{fact-5.2}, $(T_1\cup \{5,6\})\setminus P\notin \hht$. If $\hg_2=\emptyset$, then by \eqref{ineq-5.5}
\[
\sum_{P\in (5,6)\times T_1} \omega(P) \leq |\hg_1| +(6-|\hg_1|) + \frac{4}{2}\times 4 = 14<16.
\]
If $|\hg_2|=1$, then by \eqref{ineq-5.5}
\[
\sum_{P\in (5,6)\times T_1} \omega(P) \leq 3+(|\hg_1|-1)+(6-|\hg_1|)+ \frac{4}{2}\times 4 = 16.
\]
Thus we may assume $\hg_2=\{(5,1),(6,1)\}=\hg_1$.

Then $(5,2,3),(6,2,3)\notin \hht$ and
\[
\hht(\{5,2\},S)=\hht(\{5,3\},S)=\hht(\{6,2\},S)=\hht(\{6,3\},S)=\emptyset.
\]
If one of $|\hht(\{5,1\},S)|,|\hht(\{6,1\},S)|$ equals 3, then by Fact \ref{fact-5.1}, $(5,6,1)\notin \hht$. Recall that $|\hg_1|=2$. By \eqref{ineq-5.5}, it follows that
\[
\sum_{P\in (5,6)\times T_1} \omega(P) \leq 3\times 2 +(6-|\hg_1|) + \frac{3}{2}\times 4 = 16.
\]
If both of $|\hht(\{5,1\},S)|,|\hht(\{6,1\},S)|$ equal 2, then by \eqref{ineq-5.5}
\[
\sum_{P\in (5,6)\times T_1} \omega(P) \leq 2\times 2 +(6-|\hg_1|) + \frac{4}{2}\times 4 = 16.
\]
Thus, by Lemma \ref{lem-3.2} we have
\[
|\hht_{56}|= \sum_{P\in (5,6)\times T_1} \omega(P)+\sum_{P\in (5,6)\times T_2} \omega(P)\leq 16+18 = 34.
\]
\end{proof}

\section{The case $\hr$ is not 3-intersecting}

In this section, we assume that $\hr=\hht^2$ is not 3-intersecting. Then one can choose $R_1,R_2\in \hr$ with $|R_1\cap R_2|=2$ and  using $\nu(\hht)=2$ it is easy to see that up to isomorphism the four edges of $\hht$ involved are  $(1,2,5),(3,4,5),(6,7,8),(6,9,10)$.

Define the partition $\hht=\hht_0\cup \hht_1$ by
\[
\hht_0=\left\{T\in \hht\colon (5,6)\cap T=\emptyset\right\} \mbox{ and }\hht_1=\left\{T\in \hht\colon |(5,6)\cap T|\geq 1\right\}.
\]
Clearly $|\hht_1|=|\hht_{56}|$.

\begin{fact}\label{fact-3.1}
For $P\in (1,2)\times (3,4)$, $|\hht_0(P,[4])|\leq 5$. Moreover, if $|\hht_0(P,[4])|= 5$ then $\hht\cap \binom{[4]}{3}=\emptyset$. Similarly, for $P\in (7,8)\times (9,10)$, $|\hht_0(P,[7,10])|\leq 5$ and if $|\hht_0(P,[7,10])|= 5$ then $\hht\cap \binom{[7,10]}{3}=\emptyset$.
\end{fact}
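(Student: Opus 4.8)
The plan is to argue by contradiction for the bound $|\hht_0(P,[4])| \le 5$, and then track the equality case. Fix $P \in (1,2)\times(3,4)$, say $P = (1,3)$, and suppose $|\hht_0(P,[4])| \ge 6$. Each member of $\hht_0(P,[4])$ is a singleton $\{w\}$ with $w \notin [4]$ and $w \neq 5, 6$ (since $T \in \hht_0$ means $(5,6) \cap T = \emptyset$), so we obtain six distinct vertices $w_1,\dots,w_6 \notin [6]$ with $(1,3,w_\ell) \in \hht$. The key observation is that the two disjoint edges $(6,7,8)$ and $(6,9,10)$ supply a pseudo sunflower-type obstruction: more precisely, I would invoke Fact \ref{fact-2} (no pseudo sunflower of size $3s+1 = 7$) applied to the family $\{(1,3,w_1),\dots,(1,3,w_6)\}$ together with one more petal. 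Actually the cleaner route is Corollary \ref{cor-1}: $|\hht(P)| \le 3s = 6$ for any pair $P$. Since $\hht_0(P,[4]) \subseteq \hht(P)$ and moreover any edge of $\hht(P)$ lying inside $[4]$ or meeting $\{5,6\}$ is excluded from $\hht_0(P,[4])$, we get $|\hht_0(P,[4])| \le |\hht(P)| \le 6$; to improve $6$ to $5$ I need to exhibit at least one edge through $P$ that is \emph{not} counted.

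The mechanism for finding that uncounted edge is the matching number constraint $\nu(\hht) = 2$ interacting with $(6,7,8)$ and $(6,9,10)$. Suppose $|\hht_0(P,[4])| = 6$, giving six petals $(1,3,w_\ell)$. Consider the other edges through $P$: by Corollary \ref{cor-1} there are none, i.e. $\hht(P) = \{(1,3,w_1),\dots,(1,3,w_6)\}$ exactly. Now pick any petal $(1,3,w_\ell)$. Its complement vertices $2,4$ lie in $[4]$. If $\hht \cap \binom{[4]}{3} \neq \emptyset$, choose an edge $E \in \hht \cap \binom{[4]}{3}$. Since $|E| = 3$ and $E \subseteq [4]$, $E$ misses at least one of $\{2,4\}$; but also $E$ may meet $\{1,3\}$. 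The decisive point: among $\{(1,3,w_1),\dots,(1,3,w_6)\}$ and an edge $E \subseteq [4]$ and one of $(6,7,8), (6,9,10)$, I want a $3$-matching. Here at most one $w_\ell$ can lie in $\{7,8\}$ and at most one in $\{9,10\}$ (the $w_\ell$ are distinct), so among six petals at least four avoid $\{7,8,9,10\}$ entirely, and for such a petal $(1,3,w_\ell)$ the three sets $(1,3,w_\ell)$, the appropriate edge $E' \in \{(6,7,8),(6,9,10)\}$ disjoint from $E$, and $E$ itself — wait, $E \subseteq [4]$ meets $(1,3,w_\ell)$ unless $E \subseteq \{2,4\}$, impossible. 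So instead: take $E \in \hht\cap\binom{[4]}{3}$; $E$ contains at most one of $1,3$, hence contains at least one of $2,4$, and $|E \cap \{1,3\}| \le 1$. Replace: the edge $(1,3,w_\ell)$ is disjoint from $E$ iff $\{1,3\} \cap E = \emptyset$, i.e. $E = (2,4,*)$ — but $E \subseteq [4]$ forces $E \not\subseteq [4]$ unless... $\binom{[4]}{3}$ has only the four triples, and $(2,4,*)$ with $* \in \{1,3\}$ always meets $\{1,3\}$. So $E$ always meets every petal. The right pairing is therefore: use $E$ together with one petal's \emph{complement}. Since $(1,3,w_\ell) \in \hht$ and $E \in \hht$ with $E \ni 2$ or $E \ni 4$: pick $E = (1,2,4)$ say; then $(1,2,4)$ meets $(1,3,w_\ell)$ in vertex $1$. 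Hmm — this forces me to use a petal avoiding $1$, impossible since all petals contain $1$.

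The clean argument is thus: among the four triples of $\binom{[4]}{3}$, if $\hht$ contains one, say $E_0$, then $E_0 = [4]\setminus\{a\}$ for some $a \in [4]$; the two triples $[4]\setminus\{1\} = (2,3,4)$ and $[4]\setminus\{3\} = (1,2,4)$ are the only ones \emph{potentially} disjoint from some structure — but what I really use is: pick a petal $(1,3,w_\ell)$ with $w_\ell \notin \{7,8,9,10\}$ (at least two such exist among six), and then $(1,3,w_\ell)$, $(2,4,\cdot)$... The honest plan is to set up the $3$-matching as $\{(1,3,w_\ell),\ F,\ G\}$ where $F \in \{(6,7,8),(6,9,10)\}$ and $G \in \hht\cap\binom{[4]}{3}$ with $G = (2,4,x)$; but no such $G$ exists in $\binom{[4]}{3}$. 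So the contradiction must instead come from: $\hht\cap\binom{[4]}{3}\neq\emptyset$ means some $(i,j,k) \subseteq [4]$ is an edge; together with a petal $(1,3,w_\ell)$ sharing exactly the vertices $\{i,j,k\}\cap\{1,3\}$, and since $|\{i,j,k\}| = 3 > 2 = |[4]\setminus\{1,3\}|$... this forces $\{i,j,k\}\cap\{1,3\}\neq\emptyset$, so $(i,j,k)$ meets the petal. Therefore the $3$-matching cannot involve both an edge in $\binom{[4]}{3}$ and a petal through $(1,3)$; the obstruction must be purely between the six petals and $(6,7,8),(6,9,10)$: since $\ge 4$ petals avoid $\{7,8,9,10\}$ and two of them, say $(1,3,w_1),(1,3,w_2)$, have $w_1,w_2,6$ distinct from each other and from $7,\dots,10$, the family $\{(1,3,w_1),(1,3,w_2),(1,3,w_3),(6,7,8),(6,9,10)\}$... the petals pairwise intersect in $(1,3)$, so at most one petal enters a matching. \textbf{Hence $6$ petals is itself fine for $\nu = 2$}, and the real bound $5$ must use the \emph{additional} interaction recorded by Fact \ref{fact-5.2}-style reasoning or the disjointness of $(1,2,5),(3,4,5)$ from petals. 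The main obstacle, which I would resolve by a careful case split exactly as in the proof of Fact \ref{fact-5.1}, is pinning down that the sixth petal forces either an edge $(2,4,\cdot)\in\hht$ (contradicting $\nu(\hht)=2$ via $(2,4,\cdot),(1,3,w_\ell),(6,7,8)$ when $w_\ell \ne 7,8$) or forces $\hht\cap\binom{[4]}{3}=\emptyset$; and in the equality case $|\hht_0(P,[4])|=5$, the same split shows $(2,4,4)$-type completions are unavailable, which is precisely the statement $\hht\cap\binom{[4]}{3}=\emptyset$. The symmetric statement for $P\in(7,8)\times(9,10)$ follows verbatim after swapping the roles of the two sunflowers $(1,2,5),(3,4,5)$ and $(6,7,8),(6,9,10)$.
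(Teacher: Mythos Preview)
Your proposal does not establish either claim; both the bound $\le 5$ and the equality case remain unproved. The irony is that your very first instinct was the correct one: the seven edges $(1,3,w_1),\dots,(1,3,w_6),(1,2,5)$ \emph{do} form a pseudo sunflower of size $7$ with center $\{1,3\}$ (take $S_0=(1,3,w_1)$; then $S_i\setminus\{1,3\}$ are the singletons $\{w_1\},\dots,\{w_6\}$ and the pair $\{2,5\}$, pairwise disjoint since each $w_\ell\notin[6]$). This immediately contradicts Fact~\ref{fact-2} and gives $|\hht_0(P,[4])|\le 5$. You abandoned this for Corollary~\ref{cor-1}, which only yields $6$, and then spent the rest of the argument chasing the edges $(6,7,8),(6,9,10)$ --- these play \emph{no role} in the paper's proof of this Fact and, as you yourself discovered, cannot be combined with a petal $(1,3,w_\ell)$ and an edge of $\binom{[4]}{3}$ into a $3$-matching.

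For the ``moreover'' clause you never produce an argument; the vague appeal to a ``case split as in Fact~\ref{fact-5.1}'' and to a hypothetical edge $(2,4,\cdot)$ is not a proof. The paper's mechanism is entirely different and uses $2$-resilience directly: choose $R\in\hr(\bar 1,\bar 3)$. Each petal $(1,3,x_i)$ must meet $R$, forcing $x_1,\dots,x_5\in R$; the edges $(1,2,5)$ and $(3,4,5)$ must also meet $R$, and since $|R|=6$ with five slots already occupied by the $x_i\notin[6]$, the only possibility is $5\in R$, i.e.\ $R=\{5,x_1,\dots,x_5\}$. Now $R\cap[4]=\emptyset$, so any edge $E\subset[4]$ would be disjoint from $R\in\hr$, contradicting $\nu(\hht)=2$. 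The relevant auxiliary edges throughout are $(1,2,5)$ and $(3,4,5)$, not $(6,7,8)$ and $(6,9,10)$.
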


\begin{proof}
Let us assume $P=(1,3)$. We claim that $|\hht_0(\{1,3\},[4])|\leq 5$. Indeed, if $(1,3,x_i)\in \hht_0$ with $x_i\notin [6]$, $i=1,2,\ldots,6$ then $(1,3,x_1),\ldots,(1,3,x_6),(1,2,5)$ form a pseudo sunflower of size 7, contradicting Fact \ref{fact-2}. Thus $|\hht_0(\{1,3\},[4])|\leq 5$.

If $(1,3,x_i)\in \hht_0$ with $x_i\notin [6]$, $i=1,2,\ldots,5$, then by 2-resilience there exists $R\in \hr(\bar{1},\bar{3})$ with $x_1,x_2,\ldots,x_5\in R$. Since $\hht,\hr$ are cross-intersecting, by $(1,2,5)\cap R\neq \emptyset\neq (3,4,5)\cap R$ we infer that $R=\{5,x_1,x_2,\ldots,x_5\}$. Then $\hht\cap \binom{[4]}{3}=\emptyset$ follows from $\nu(\hht)\leq 2$.
\end{proof}

\begin{lem}\label{lem-3.1}
$|\hht_0|\leq 24$. Moreover, if $|\hht_0|\geq  23$ then by symmetry we may assume that $\binom{[4]}{3}\subset \hht$ and $(7,9,w)\in \hht$ for some $w\notin [6]$. If $|\hht_0|\geq  21$ then by symmetry we may assume that  $|\hht\cap \binom{[4]}{3}|\geq 3$ and $(7,9,w)\in \hht$ for some $w\notin [6]$.
\end{lem}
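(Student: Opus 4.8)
The plan is to split $\hht_0$ according to how an edge meets the two blocks $[4]$ and $[7,10]$, to bound each part by combining Fact~\ref{fact-3.1} with $\nu(\hht)=2$, and then to read off the two structural refinements from the near-equality analysis.

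\smallskip\noindent\textbf{Step 1 (a dichotomy for $\hht_0$).} Since $(1,2,5),(6,7,8)$ is a matching, the $6$-set $R_1=\{1,2,5,6,7,8\}$ lies in $\hr$, and likewise $R_2=\{1,2,5,6,9,10\}$, $R_3=\{3,4,5,6,7,8\}$, $R_4=\{3,4,5,6,9,10\}$. Every $T\in\hht$ meets each $R_\ell$ (otherwise $T$ and the two edges making up $R_\ell$ form a matching of size $3$). For $T\in\hht_0$, which avoids $\{5,6\}$, this says $T$ meets each of $\{1,2\}\cup\{7,8\}$, $\{3,4\}\cup\{9,10\}$, $\{1,2\}\cup\{9,10\}$, $\{3,4\}\cup\{7,8\}$, and an elementary Boolean simplification turns this into: $T$ meets both $\{1,2\}$ and $\{3,4\}$, or $T$ meets both $\{7,8\}$ and $\{9,10\}$. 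A triple cannot meet all four of these $2$-sets, so the two alternatives are mutually exclusive; write $\hht_0=\hht_0'\sqcup\hht_0''$ for the resulting partition. The involution interchanging $1,2,3,4,5$ with $7,8,9,10,6$ respectively preserves the family of the four base edges and swaps $\hht_0'$ with $\hht_0''$, so it suffices to analyse $\hht_0'$.

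\smallskip\noindent\textbf{Step 2 (bounding $|\hht_0'|$).} Each $T\in\hht_0'$ has $|T\cap[4]|\in\{2,3\}$, and when it is $2$ the pair $T\cap[4]$ meets both $\{1,2\}$ and $\{3,4\}$. Hence, writing $\alpha=|\hht\cap\binom{[4]}{3}|$,
\[
|\hht_0'|=\alpha+\sum_{P}|\hht_0(P,[4])|,
\]
the sum running over the four pairs $P\subseteq[4]$ meeting both $\{1,2\}$ and $\{3,4\}$. By Fact~\ref{fact-3.1}, $|\hht_0(P,[4])|\le 5$, with equality forcing $\alpha=0$. The crucial additional estimate is for two \emph{opposite} pairs $P_1,P_2$ (that is, $P_1\cup P_2=[4]$): the edges $P_1\cup\{z_1\}$ and $P_2\cup\{z_2\}$ are disjoint as soon as $z_1\ne z_2$, so adjoining $(6,7,8)$, respectively $(6,9,10)$, and invoking $\nu(\hht)=2$ forces the third-vertex sets over $P_1$ and over $P_2$ to coincide in a single element once each of them contains a vertex outside $\{7,8\}$, respectively outside $\{9,10\}$. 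Sorting third vertices by their membership in $\{7,8\}$ and $\{9,10\}$ then gives $|\hht_0(P_1,[4])|+|\hht_0(P_2,[4])|\le 5$, and $\le 4$ once $\alpha\ge 1$. Summing over the two opposite pairs, $|\hht_0'|\le 10$ if $\alpha=0$ and $|\hht_0'|\le\alpha+8\le 12$ if $\alpha\ge 1$; in particular $|\hht_0'|\ge 11$ implies $\alpha\ge 3$, and $|\hht_0'|=12$ implies $\alpha=4$.

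\smallskip\noindent\textbf{Step 3 (combining the classes; the refinements).} The same bounds hold for $\hht_0''$ with $\beta=|\hht\cap\binom{[7,10]}{3}|$ in place of $\alpha$. If $\alpha=0$ or $\beta=0$ then $|\hht_0|\le 22$, and if $\alpha,\beta\ge 1$ then $|\hht_0|\le 24$; this proves the first assertion. Suppose $|\hht_0|\ge 23$. Then one class, say $\hht_0'$, has size $12$, so $\alpha=4$, i.e.\ $\binom{[4]}{3}\subset\hht$; also $|\hht_0''|\ge 11$, so $\beta\ge 1$, and any triple in $\binom{[7,10]}{3}\cap\hht$ contains a pair from $\{7,8\}\times\{9,10\}$, so after relabelling within $\{7,8\}$ and within $\{9,10\}$ --- which keeps $(6,7,8),(6,9,10)\in\hht$ and does not touch $[4]$ --- we have $(7,9,w)\in\hht$ with $w\notin[6]$. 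Suppose now $|\hht_0|\ge 21$. Then $|\hht_0'|\ge 11$ (after symmetry), so $\alpha\ge 3$, and $|\hht_0''|\ge 9$. If $\beta\ge 1$ we conclude as before. If $\beta=0$, every edge of $\hht_0''$ has the form $Q\cup\{w\}$ with $Q\in\{7,8\}\times\{9,10\}$ and $w\notin\{5,6\}\cup[7,10]$; a matching argument with $(1,2,5)$ and $(3,4,5)$ shows that the edges with $w\in[4]$ contribute at most $4$ per opposite pair of $Q$'s, so from $|\hht_0''|\ge 9$ there is an edge $Q\cup\{w\}\in\hht$ with $w\notin[10]$, and relabelling gives $(7,9,w)\in\hht$ with $w\notin[6]$.

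\smallskip\noindent\textbf{Main obstacle.} The work concentrates in the opposite-pair estimate of Step~2 and its variant in Step~3: one must classify the third vertices of the ``mixed'' edges by their position relative to $\{7,8\}$ and $\{9,10\}$ (and, in the $\beta=0$ subcase, relative to $\{1,2\}$ and $\{3,4\}$), keep careful track of the equality case of Fact~\ref{fact-3.1}, and check that each relabelling used to reach the hypotheses of Lemmas~\ref{lem-3.3}--\ref{lem-3.4} leaves the four base edges $(1,2,5),(3,4,5),(6,7,8),(6,9,10)$ in place. The remainder is routine bookkeeping.
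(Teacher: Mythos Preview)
Your proposal is correct and follows essentially the same route as the paper: both hinge on the dichotomy $\hht_0=\hht_0'\sqcup\hht_0''$, the opposite-pair estimate obtained by playing the extra edge against $(6,7,8)$ and $(6,9,10)$ (resp.\ $(1,2,5)$ and $(3,4,5)$), and the equality case of Fact~\ref{fact-3.1}. The only cosmetic difference is that the paper packages the count via a weight $\omega(P)$ on pairs (so that $\omega(P)+\omega(P')\le 6$ for opposite pairs), whereas you bound $|\hht_0'|\le \alpha+8$ directly in terms of $\alpha=|\hht\cap\binom{[4]}{3}|$; this makes your near-equality analysis in Step~3 a touch cleaner, and your $\beta=0$ subcase is more explicit than the paper's about why the third vertex $w$ can be taken outside $[6]$.
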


\begin{proof}
Let $T\in \hht_0$.
By $\nu(\hht)\leq 2$, we infer that either both $T\cap  (1,2)$ and $T\cap (3,4)$ are non-empty or both $T\cap  (7,8)$ and $T\cap (9,10)$ are non-empty.
For each $P\in (1,2)\times (3,4)$ and $P\subset T\in \hht_0$, define $\omega(P,T)=1$ if $|T\cap[4]|=2$ and  $\omega(P,T)=\frac{1}{2}$ if $T\subset [4]$. Similarly, for each $P\in (7,8)\times (9,10)$ and $P\subset T\in \hht_0$, define $\omega(P,T)=1$ if $|T\cap[7,10]|=2$ and  $\omega(P,T)=\frac{1}{2}$ if $T\subset [7,10]$. Define
\[
\omega(P) =\sum_{P\subset T\in \hht_0} \omega(P,T).
\]
Then it is easy to check that
\begin{align}\label{ineq-2.1}
|\hht_0| = \sum_{P\in (1,2)\times (3,4)} \omega(P)+ \sum_{P\in (7,8)\times (9,10)} \omega(P).
\end{align}

Let us prove that
\begin{align}\label{ineq-new2.1}
 \omega(P)+ \omega(P')\leq 6
\end{align}
whenever $P,P'$ are disjoint pairs in $(1,2)\times (3,4)\cup (7,8)\times (9,10)$ with $P\cup P'=[4]$ or $P\cup P'=[7,10]$. By symmetry let $P=(1,3)$, $P'=(2,4)$.

Assume first that both $\hht_0(P,[4])$ and $\hht_0(P',[4])$ are non-empty.
If there exist  $x\neq y$ with $x\in \hht_0(P)$, $y\in \hht_0(P')$, then by $\nu(\hht)\leq 2$ both $(x,y)\cap (7,8)$ and $(x,y)\cap (9,10)$ are non-empty. In this case we infer
\begin{align*}
|\hht_0(\{1,3\},[4])|+|\hht_0(\{2,4\},[4])| \leq 4.
\end{align*}
Therefore,
\begin{align}\label{ineq-3.4}
\omega(1,3)+\omega(2,4) \leq 4+\frac{1}{2}\times 4 =6,
\end{align}
with equality holding only if $\binom{[4]}{3}\subset \hht_0$.

Assume next that one  of $\hht_0(\{1,3\},[4]), \hht_0(\{2,4\},[4])$ is empty, say $\hht_0(\{2,4\},[4])=\emptyset$. By Fact \ref{fact-3.1} $|\hht_0(\{1,3\},[4])|\leq 5$. If $|\hht_0(\{1,3\},[4])|\leq 4$, then
\begin{align}\label{ineq-3.5}
\omega(1,3)+\omega(2,4) \leq (4+\frac{2}{2})+ (\frac{1}{2}+\frac{1}{2}) =6,
\end{align}
with equality holding only if $\binom{[4]}{3}\subset \hht_0$.
If $|\hht_0(\{1,3\},[4])|= 5$ then by Fact \ref{fact-3.1} $\hht\cap \binom{[4]}{3}=\emptyset$. It follows that
\begin{align}\label{ineq-3.6}
\omega(1,3)+\omega(2,4) \leq 5<6.
\end{align}
This proves \eqref{ineq-new2.1}.
Thus,
\[
|\hht_0| = \sum_{P\in (1,2)\times (3,4)\cup (7,8)\times (9,10)} \omega(P) \leq 4\times 6 = 24.
\]

If $|\hht_0|\geq 23$, then equality holds for at least one of the four pairs $(P,P')$. By symmetry assume that equality holds for the pair $((1,3),(2,4))$ and then $\binom{[4]}{3}\subset \hht$ follows from \eqref{ineq-3.4} and \eqref{ineq-3.5}.  Moreover, $|\hht_0|\geq 23$ also implies that  one of $\omega(7,9)+\omega(8,10) \geq 5+\frac{1}{2}$ and $\omega(7,10)+\omega(8,9) \geq 5+\frac{1}{2}$  holds. By symmetry assume that $\omega(7,9)+\omega(8,10) \geq 5+\frac{1}{2}$. Then $|\hht\cap \binom{[7,10]}{3}|\geq 3$. Thus we may assume $(7,9,8)\in \hht$.

Suppose that  $|\hht_0|\geq 21$. If $\omega(P)+\omega(P')=6$ for one of the four pairs $(P,P')$,  by symmetry assume  $\omega(1,3)+\omega(2,4)=6$, then $\binom{[4]}{3}\subset \hht$ follows. Moreover, $|\hht_0|\geq 21$ also implies that  one of $\omega(7,9)+\omega(8,10) \geq 4+\frac{1}{2}$ and $\omega(7,10)+\omega(8,9) \geq 4+\frac{1}{2}$  holds. By symmetry assume that $\omega(7,9)+\omega(8,10) \geq 4+\frac{1}{2}$. Then either $|\hht\cap \binom{[7,10]}{3}|\geq 1$ or one of $|\hht_0(\{7,9\},[7,10])|,|\hht_0(\{8,10\},[7,10])|$ equals 5. In either case, we may assume $(7,9,w)\in \hht$ for some $w\notin [6]$.

If $\omega(P)+\omega(P')<6$ holds for all of the four pairs $(P,P')$, then by $|\hht_0|\geq 21$ there exists  $(P,P')$ such that $\omega(P)+\omega(P')=5+\frac{1}{2}$. By symmetry assume  $\omega(1,3)+\omega(2,4) \geq 5+\frac{1}{2}$. Then  $|\hht \cap \binom{[4]}{3}|\geq 3$ follows from \eqref{ineq-3.4} and \eqref{ineq-3.5}.  Moreover, by symmetry we may also assume $\omega(7,9)+\omega(8,10) \geq 5$. Then by the same argument we may assume $(7,9,w)\in \hht$ for some $w\notin [6]$. Thus the lemma is proven.
\end{proof}

%

Now we are in a position to conclude the proof of Theorem \ref{thm-main}.

\begin{proof}[Proof of Theorem \ref{thm-main}]
Let $\hht\subset \binom{[n]}{3}$ be a 2-resilient family with $\nu(\hht)=2$. By the proof in Section 4, we may assume that $\hr=\hht^2$ is not 3-intersecting. Then we may assume  $(1,2,5),(3,4,5),(6,7,8),(6,9,10)\in \hht$.

By Lemma \ref{lem-main}, we infer that $|\hht_1|=|\hht_{56}|  \leq 36$.
If $|\hht_0|\leq 20$ then
\[
|\hht| =|\hht_0|+|\hht_1| \leq 20+36=56
\]
and we are done. Thus we may assume that $|\hht_0|\geq 21$.

If $|\hht_0|\geq 23$, then by Lemma \ref{lem-3.1} we may assume $\binom{[4]}{3}\subset \hht$ and $(7,9,w)\in \hht$ for some $w\notin [6]$. Then by Lemmas \ref{lem-3.1} and \ref{lem-3.3},
\[
|\hht| =|\hht_0|+|\hht_1| \leq 24+28<56.
\]

Finally assume $21\leq |\hht_0|\leq  22$. By Lemma \ref{lem-3.1},  we may assume $|\hht\cap \binom{[4]}{3}|\geq 3$ and $(7,9,w)\in \hht$ for some $w\notin [6]$. Then by Lemma \ref{lem-3.4} we conclude that
\[
|\hht| =|\hht_0|+|\hht_1| \leq 22+34=56.
\]
\end{proof}

\section{Concluding remarks}

In the present paper we were dealing with 3-graphs in the context of Conjecture \ref{conj-1}. The ratio of the bound in Theorem \ref{thm-main3} and that of conjecture is about $\frac{73}{27}\sim 2.7$, which is still very large. The case of $(k-1)$-resilient $k$-graphs seems to be much harder. The natural construction $\binom{[ks+k-1]}{k}$ is no longer optimal for small values of $s$. However for $k$ fixed and $s\rightarrow \infty$ we could not find any better constructions. This motivates the following rather audacious conjecture.

\begin{conj}\label{conj-6.1}
For fixed $k\geq 4$ and $s\geq s_0(k)$,
\begin{align}
m(k,s)=\binom{ks+k-1}{k}.
\end{align}
\end{conj}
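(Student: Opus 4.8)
\textbf{A proof strategy for Conjecture \ref{conj-6.1}.}
The lower bound is the clique $\binom{[ks+k-1]}{k}$, which is $(k-1)$-resilient with matching number $s$ and is in fact a \emph{maximal} $k$-graph of matching number $s$: any further $k$-set $E$ meets $[ks+k-1]$ in some $j\le k-1$ vertices, so $E$ together with a matching of size $\lfloor (ks+k-1-j)/k\rfloor=s$ inside $\binom{[ks+k-1]}{k}$ is a matching of size $s+1$. Thus everything is in the upper bound, and the plan is to push the counting scheme behind Theorem \ref{thm-main3} until its constant (for general $k$) drops to $k^k/k!$. Fix a maximum matching $T_1,\dots,T_s\in\hht$, put $R=T_1\cup\dots\cup T_s$ so $|R|=ks$, and let $\hr=\hht^s$, which is $s$-intersecting by Fact \ref{fact-1} and its corollary. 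Every edge meets $R$. Split $\hht$ by the pattern $(|T\cap T_1|,\dots,|T\cap T_s|)$ exactly as in Section 2, bounding the links $\hht(x,R)$ and $\hht(P,R)$ via Facts \ref{fact-3}, \ref{fact-4} and F\"uredi's pseudo-sunflower bound (Theorem \ref{psudo-sunflower}). The bulk term $|\{T\in\hht:T\subseteq R\}|\le\binom{ks}{k}$ is already essentially optimal, and since $\binom{ks+k-1}{k}-\binom{ks}{k}=\Theta_k(s^{k-1})$, the whole fight is over the lower-order edges that use vertices \emph{outside} $R$ --- yet these must be counted \emph{exactly}.

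The decisive structural input has to be that $(k-1)$-resilience forbids ``star-like'' spread. Indeed, if $\tau(\hht)\le s$, a minimum cover $Y$ loses at most $k-1$ of its vertices upon deleting $k-1$ vertices, so the resulting family has cover of size $\le s-k+1$ and hence matching number $\le s-k+1<s$, contradicting resilience; and no link $\hht(x,R)$, $\hht(P,R)$ can spread over an unbounded ground set, since a pseudo-sunflower of size $2s$ is ruled out by Fact \ref{fact-3}. I would distil this into a \emph{localisation lemma}: for $s\ge s_0(k)$ there is a set $W_0$ with $|W_0|=O_k(1)$ such that all but $O_k(1)$ edges of $\hht$ lie in $\binom{R\cup W_0}{k}$. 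That the residue is $O_k(1)$ rather than merely $o(s^{k-1})$ is essential, since $\binom{ks+k-1}{k}$ is the bulk plus a $\Theta_k(s^{k-1})$ correction and no slack can be absorbed.

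Granting the localisation lemma, set $W=R\cup W_0$, $|W|=ks+m$ with $m=O_k(1)$. If $m\le k-1$ then $\nu(\binom{W}{k})=s$ and $|\hht|\le\binom{|W|}{k}\le\binom{ks+k-1}{k}$ at once. If $m\ge k$, the full clique on $W$ has matching number $s+\lfloor m/k\rfloor>s$, so $\hht$ is a strict subfamily with $\nu=s$, and one wants $|\hht|\le\max\{\binom{ks+m}{k}-\binom{ks+m-s}{k},\binom{ks+k-1}{k}\}$. For $m=O_k(1)$ the star term equals $\big(1-(\tfrac{k-1}{k})^{k}+o(1)\big)\binom{ks}{k}<\binom{ks}{k}\sim\binom{ks+k-1}{k}$, so the clique term wins and gives the conjectured value. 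For $k=3$ this is the unconditional theorem of \cite{F12}; for $k\ge4$ it is known only in ranges not covering $n=ks+O(1)$, so one must instead establish it directly in this small-excess regime using the full force of $(k-1)$-resilience --- a finite (in $s$) but $k$-dependent case analysis, in the spirit of Sections 3--6 of the present paper.

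The genuine obstacle is the localisation lemma, and specifically quarantining the $s=1$ anomaly. Because $m(k,1)\ge\lfloor (e-1)k!\rfloor>\binom{2k-1}{k}$ for $k\ge4$ (see \eqref{ineq-1.2}), a maximal intersecting $(k-1)$-resilient family on a ground set disjoint from $R$ can be ``hung off'' the clique: for instance $\binom{[ks-1]}{k}$ together with such a family is $(k-1)$-resilient with $\nu=s$ and genuinely uses more than $ks+k-1$ vertices. One must show that every such gadget costs, in bulk deficit, at least what it adds, so the total stays below $\binom{ks+k-1}{k}$ once $s>s_0(k)$ --- which is precisely why a threshold $s_0(k)$ is unavoidable. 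Converting the crude pseudo-sunflower counting into a stability statement sharp enough to see this, and to pin down the \emph{exact} constant rather than an asymptotic one, is the reason the conjecture is still open; the cases $k=3$, $s\le2$ of the present paper already show how delicate the bookkeeping becomes.
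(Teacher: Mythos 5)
This statement is Conjecture \ref{conj-6.1}; the paper offers no proof of it (it is presented in the concluding remarks as an open, ``rather audacious'' conjecture), so there is nothing to compare your argument against, and what you have written is a strategy outline rather than a proof. Your correct ingredients are the verification that $\binom{[ks+k-1]}{k}$ is $(k-1)$-resilient with $\nu=s$ (that only gives $m(k,s)\geq\binom{ks+k-1}{k}$), the observation that $(k-1)$-resilience forces $\tau(\hht)\geq s+k-1$, and the pseudo-sunflower bounds on links. But the two load-bearing steps are both unproven. First, the ``localisation lemma'' with an $O_k(1)$ residue is asserted, not derived: Facts \ref{fact-3} and \ref{fact-4} only bound each individual link $\hht(x,R)$ or $\hht(P,R)$ by $O_k(s)$ or $O_k(s^{k-2})$, and summing these over $R$ gives the $C_k s^k$-type bound of Theorem \ref{thm-main3} with $C_k>1/k!$ --- nothing in the paper's machinery confines the edges leaving $R$ to a bounded vertex set $W_0$, and you yourself exhibit the obstruction (a maximal intersecting $(k-1)$-resilient gadget of size $>\binom{2k-1}{k}$ hung off a clique) without showing how to defeat it. Second, even granting localisation, the step ``one wants $|\hht|\leq\max\{\binom{ks+m}{k}-\binom{ks+m-s}{k},\binom{ks+k-1}{k}\}$'' is exactly the Erd\H{o}s Matching Conjecture on $n=ks+O_k(1)$ vertices, which for $k\geq4$ is open precisely in that range (the known results require $n>(2s+1)k$ or $n>\frac{5}{3}sk$); invoking it is circular in difficulty, and replacing it by ``a finite but $k$-dependent case analysis'' is a placeholder, not an argument.

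So the proposal cannot be accepted: it reduces one open problem to two others (an exact localisation/stability statement and the small-excess EMC for $k\geq4$) and proves neither. If you want to make partial progress in this direction, the realistic targets suggested by the paper are (i) improving the constant in Theorem \ref{thm-main3} toward $\frac{9}{2}=\frac{27}{3!}$ for $k=3$, or (ii) proving a genuine stability version of Fact \ref{fact-3} showing that a vertex $x\in R$ with $|\hht(x,R)|$ close to its maximum forces rigid structure on $\hr$; either would be a real step, whereas the present outline does not close any gap.
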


Let us make a weaker version of Conjecture \ref{conj-6.1} too.

 \begin{conj}\label{conj-6.2}
Suppose that the $k$-graph $\hf$ satisfies, $\nu (\hf) =s$  and
 $\tau(\hf)=sk$. Then
\[
|\hf| \leq {sk+k-1 \choose k} \mbox{ for } s>s_0(k).
\]
\end{conj}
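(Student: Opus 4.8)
The plan is an induction on $s$ whose engine is the observation that, since $\tau(\hf)\le k\nu(\hf)$ always holds, the hypothesis $\tau(\hf)=sk$ is the extremal case and forces a great deal of structure. First note that a maximal matching $\hm$ has $|\bigcup\hm|\ge\tau(\hf)=ks$, so $|\hm|\ge s$ and hence $|\hm|=s$; extending $\{F\}$ to a maximal matching therefore shows that \emph{every edge of $\hf$ lies in a maximum matching}. The same inequality $\tau\le k\nu$ shows that $\hf$ is $(k-1)$-resilient: for a set $Y$ with $|Y|\le k-1$, if $\nu(\hf(\bar Y))\le s-1$ then a minimum cover of $\hf(\bar Y)$ has size $\le k(s-1)$ and, together with $Y$, covers $\hf$ with $\le k(s-1)+(k-1)<ks$ vertices, a contradiction; so $\nu(\hf(\bar Y))=s$. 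Consequently all the machinery of Sections~2--6 (the pseudo-sunflower bounds of Facts~\ref{fact-2}--\ref{fact-4} and the weighted-counting technique of Lemma~\ref{lem-main}) is available. The crucial descent step is: for every $F\in\hf$ the family $\hf(\bar F)=\{G\in\hf\colon G\cap F=\emptyset\}$ satisfies $\nu(\hf(\bar F))=s-1$ (extend $F$ to a maximum matching; and $\nu(\hf(\bar F))\le s-1$, since otherwise $F$ together with an $s$-matching avoiding it would be an $(s{+}1)$-matching) and $\tau(\hf(\bar F))=k(s-1)$ (if $Z$ covered $\hf(\bar F)$ with $|Z|<k(s-1)$, then $F\cup Z$ would cover $\hf$ with fewer than $ks$ vertices). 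Thus $\hf(\bar F)$ again satisfies the hypotheses of the statement, with parameter $s-1$.

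For the inductive step, fix a maximum matching $T_1,\dots,T_s$, put $R=T_1\cup\cdots\cup T_s$, and write $d^\ast=d^\ast(T_1)=|\{G\in\hf\colon G\cap T_1\ne\emptyset\}|$. Then $|\hf|=|\hf(\bar{T_1})|+d^\ast$, and the inductive hypothesis gives $|\hf(\bar{T_1})|\le\binom{k(s-1)+k-1}{k}=\binom{ks-1}{k}$. It therefore suffices to prove
\[
d^\ast(T_1)\ \le\ \binom{ks+k-1}{k}-\binom{ks-1}{k},
\]
the right-hand side being exactly the number of $k$-sets meeting a fixed edge in the complete $k$-graph $\binom{[ks+k-1]}{k}$. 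I would attack this by a weighted edge-count of the type used to prove Lemma~\ref{lem-main}: each $G$ meeting $T_1$ is distributed among the pairs and vertices of $T_1$ that it contains, the local contributions of the links $\hf(x,R)$ and $\hf(P,R)$ ($x\in T_1$, $P\subset T_1$) are capped by the pseudo-sunflower bounds of Facts~\ref{fact-2}--\ref{fact-4}, and $(k-1)$-resilience is used to push a matching $R'\in\hr$ off any prescribed $(k-1)$-set, which limits how edges leaving $R$ can coexist with the rest of the matching. The bookkeeping should be arranged so that the totals collapse, via Vandermonde's identity $\sum_{j=1}^{k}\binom{ks}{j}\binom{k-1}{k-j}=\binom{ks+k-1}{k}$, to precisely the target value. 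The base case $s=s_0(k)$ must be established by hand; for $k=3$ this is Theorem~\ref{thm-main} (with the crude bound of Theorem~\ref{thm-main3} used, if needed, to push the threshold down), and in general it rests on whatever small-$s$ analysis anchors the induction.

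I expect the inequality for $d^\ast(T_1)$ — specifically, getting its constant exactly right — to be the real difficulty. The pseudo-sunflower bounds show that each link $\hf(x,R)$ has size $O_k(s^{k-1})$, so $d^\ast(T_1)=O_k(s^{k-1})$, which is the correct order of magnitude; but the constant produced by these purely local estimates is too large (for $k=3$ this is exactly the gap between the $\tfrac{73}{6}s^3$ of Theorem~\ref{thm-main3} and the conjectured $\tfrac{9}{2}s^3$). Closing it seems to require a global statement — that edges using vertices ``far from'' $R$ are scarce, i.e. a stability result forcing $\hf$ onto essentially $ks+k-1$ vertices, in the spirit of the extremal analysis behind the Erd\H{o}s Matching Conjecture~\ref{conj-1} — rather than the vertex-by-vertex counting that suffices for the order of magnitude. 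A secondary technical point is that the induction above is lossy in the extreme case where $\hf(\bar F)$ is far from extremal for every edge $F$ while $|\hf|$ is near the bound; handling that case (for instance by averaging $d^\ast(F)$ over several well-chosen edges $F$, or by a separate argument) is part of what the stability input would have to supply.
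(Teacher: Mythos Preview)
The statement you are attempting to prove is Conjecture~\ref{conj-6.2}, which is presented in the paper as an \emph{open problem}; the paper gives no proof, and indeed labels it a weaker version of the equally open Conjecture~\ref{conj-6.1}. So there is no proof in the paper to compare against.

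Your reduction is sound as far as it goes: the observation that $\tau(\hf)=sk$ forces $(k-1)$-resilience is correct and useful (it explains why Conjecture~\ref{conj-6.2} is weaker than Conjecture~\ref{conj-6.1}), and the descent $\hf\mapsto\hf(\bar F)$ preserving both $\nu=s-1$ and $\tau=k(s-1)$ is valid. But the entire content of the problem is then concentrated in the inequality
\[
d^\ast(T_1)\le\binom{ks+k-1}{k}-\binom{ks-1}{k},
\]
and here you do not actually have a proof --- you yourself identify this as ``the real difficulty'' and note that the available tools (pseudo-sunflower bounds, the weighted counting of Lemma~\ref{lem-main}) give only the order of magnitude, with the wrong constant. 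That is an accurate assessment: the gap between $\tfrac{73}{6}s^3$ in Theorem~\ref{thm-main3} and the conjectured $\tfrac{9}{2}s^3$ is precisely the obstruction, and nothing in your outline closes it. The appeal to a ``stability result forcing $\hf$ onto essentially $ks+k-1$ vertices'' is a plausible strategy, but it is a wish rather than an argument --- no such stability statement is known, and proving one would itself be a major advance toward the Erd\H{o}s Matching Conjecture in this regime.

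In short: your framework is reasonable and your self-diagnosis of the gap is honest and correct, but what you have written is a proof \emph{strategy} for an open conjecture, not a proof. The missing ingredient (the sharp bound on $d^\ast$, or equivalently the stability input) is the whole problem.
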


Finally let us recall an analogous problem for $t$-intersecting $k$-graphs with $t$-covering number $k$.

\begin{thm}[\cite{F25}]
Let $k>t\geq 1$ be integers and $\hf$ a $t$-intersecting $k$-graph with the property that to any $(k-1)$-element set $E$ there exists
some $F\in \hf$ with $|F\cap E|<t$. If $k\geq (k-t)^4$ then
\begin{align}\label{ineq-6.2}
|\hf| \leq \binom{2k-t}{k}
\end{align}
and equality holds only if $\hf=\binom{X}{k}$ for some $(2k-t)$-set $X$.
\end{thm}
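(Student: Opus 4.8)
The plan is to single out inside $\hf$ a pair of edges realizing the smallest possible intersection, namely $t$, to take their union as the candidate ground set $X$, and then to show (using F\"uredi's pseudo-sunflower theorem, Theorem~\ref{psudo-sunflower}) that hardly any edge escapes $X$, while every escaping edge is so costly that it more than cancels itself against forced-missing edges inside $X$. First I would record the trivial half: for $|X|=2k-t$ any two $k$-subsets of $X$ meet in at least $2k-|X|=t$ points, so $\binom{X}{k}$ is $t$-intersecting; and given a $(k-1)$-set $E$, after replacing $E$ by $E\cap X$ one has $|X\setminus E|\ge d+1$ (where $d:=k-t$), so some $F\in\binom{X}{k}$ contains $d+1$ points of $X\setminus E$ and only $t-1$ of $E$, whence $\binom{X}{k}$ has $t$-covering number $k$. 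Thus $\binom{X}{k}$ is admissible and realizes $\binom{2k-t}{k}$, and the rest of the proof is devoted to showing that no admissible $\hf$ does better and that equality forces $\hf=\binom{X}{k}$. The first real step is: $\hf$ cannot be $(t+1)$-intersecting, because if it were, then for any $F_0\in\hf$ and $p\in F_0$ the $(k-1)$-set $F_0\setminus\{p\}$ would satisfy $|F\cap(F_0\setminus\{p\})|\ge t$ for all $F\in\hf$, contradicting the hypothesis. So I may fix $A,B\in\hf$ with $|A\cap B|=t$ and put $R=A\cup B$, so $|R|=2k-t$.

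The second step is a locality lemma. For $F\in\hf$, writing $a=|F\cap(A\setminus B)|$, $b=|F\cap(B\setminus A)|$, $c=|F\cap(A\cap B)|$, the inequalities $a+c\ge t$, $b+c\ge t$, together with $a,b\le d$, force $|F\cap R|\ge t$; indeed $|F\cap R|<t$ would give $|F\cap A|<t$, contradicting $t$-intersection with $A\in\hf$. Hence every edge has at most $d$ points outside $R$, and for an edge with $g:=|F\setminus R|\ge1$ a little more arithmetic ($a+b+2c\ge 2t$) shows it misses at most $d-g$ points of $A\cap B$. Next, for each admissible trace $I=F\cap R$ I would study the outside family $\hf_I=\{F\setminus R\colon F\in\hf,\ F\cap R=I\}$ on $[n]\setminus R$ and show it contains no pseudo-sunflower of size $d+1$: given one with center $C$ (so $I\cup C$ is a $(k-1)$-set, $I$ and $C$ being disjoint), apply the hypothesis to $E=I\cup C$ to get $F'\in\hf$ with $|F'\cap(I\cup C)|\le t-1$; then for each petal set $P_i$, comparing $|F'\cap I|+|F'\cap P_i|\ge t$ with $|F'\cap I|+|F'\cap C|\le t-1$ yields $|F'\cap(P_i\setminus C)|\ge 1$, so $F'$ meets all $r+1$ pairwise disjoint petals $P_i\setminus C\subseteq[n]\setminus R$, forcing $r+1\le|F'\setminus R|\le d$. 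By Theorem~\ref{psudo-sunflower}, $|\hf_I|\le d^{\,k-|I|}\le d^{\,d}$. Since the number of admissible traces of size $k-g$ is $O_d(k^{\,d-g})$ (one chooses the at most $d-g$ missed points of $A\cap B$ and the remaining part of $I$ among the $2d$ points of $(A\setminus B)\cup(B\setminus A)$), summing over $g\ge1$ gives $\bigl|\hf\setminus\binom{R}{k}\bigr|=O_d(k^{\,d-1})$, which is $o\!\bigl(\binom{2k-t}{k}\bigr)$.

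The final step charges each escaping edge against missing edges of $\binom{R}{k}$. If $G\in\hf$ has $g=|G\setminus R|\ge1$, then any $H\in\binom{R}{k}$ with $|H\cap(G\cap R)|<t$ violates $t$-intersection with $G$, so $H\notin\hf$; counting such $H$ gives $\bigl|\binom{R}{k}\setminus\hf\bigr|\ge\binom{k-g}{\,d-g+1}=\Theta_d(k^{\,d-g+1})$, which strictly dominates the level-$g$ contribution $O_d(k^{\,d-g})$ to $\bigl|\hf\setminus\binom{R}{k}\bigr|$ once $k$ exceeds a fixed polynomial in $d$. Grouping the escaping edges by the value $g=|G\setminus R|$ and using the smallest nonempty level, one then gets $\bigl|\hf\setminus\binom{R}{k}\bigr|<\bigl|\binom{R}{k}\setminus\hf\bigr|$, so $|\hf|=\bigl|\hf\cap\binom{R}{k}\bigr|+\bigl|\hf\setminus\binom{R}{k}\bigr|<\binom{2k-t}{k}$ whenever some edge escapes $R$; if no edge escapes, $\hf\subseteq\binom{R}{k}$ gives $|\hf|\le\binom{2k-t}{k}$ with equality iff $\hf=\binom{R}{k}$, which is the asserted characterization with $X=R$. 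I expect the main obstacle to be precisely this last balancing: one needs the admissible-trace count and the per-trace pseudo-sunflower bound sharp enough, and the grouping of escaping edges by $|G\setminus R|$ fine enough, that the loss inside $R$ provably outweighs the gain outside already under the stated hypothesis $k\ge(k-t)^4$, rather than under some cruder threshold exponential in $k-t$; squeezing the constants down to a fourth power is where the real work lies.
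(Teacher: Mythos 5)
The paper itself contains no proof of this theorem --- it is quoted from the submitted manuscript \cite{F25} --- so I can only assess your argument on its own terms. Its architecture is reasonable and the individual steps check out: $\hf$ cannot be $(t+1)$-intersecting, so $A,B\in\hf$ with $|A\cap B|=t$ exist; every $F\in\hf$ satisfies $|F\setminus R|\le d:=k-t$ and, if $g=|F\setminus R|\ge1$, misses at most $d-g$ points of $A\cap B$; each trace family $\hf_I$ contains no pseudo-sunflower of size $d+1$ (your use of the covering hypothesis on $E=I\cup C$ is correct), so $|\hf_I|\le d^{g}$ by Theorem \ref{psudo-sunflower}; and each escaping edge at level $g$ forces at least $\binom{k-g}{d-g+1}$ sets of $\binom{R}{k}$ out of $\hf$.

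The genuine gap is the one you flag yourself, and it is not a matter of bookkeeping: the balancing does not reach the stated threshold $k\ge(k-t)^4$. At the smallest nonempty level $g_0$, your upper bound on $\bigl|\hf\setminus\binom{R}{k}\bigr|$ is (number of admissible traces) times $d^{g}$, whose hidden constant is of order $d^{d}4^{d}$ (from the choices inside $(A\setminus B)\cup(B\setminus A)$ and from the pseudo-sunflower bound), while the deficit you exhibit is only $\binom{k-g_0}{d-g_0+1}\sim k^{d-g_0+1}/(d-g_0+1)!$. For $g_0=d$ this requires $k\gtrsim d^{d+2}4^{d}$, a threshold superexponential in $d$ rather than $d^4$; smaller $g_0$ fares no better once the factorial is accounted for. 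Since the hypothesis $k\ge(k-t)^4$ is precisely the quantitative content of the theorem, what you have proved is a strictly weaker statement. Closing the gap needs a genuinely sharper ingredient --- e.g.\ a bound on the number of traces $I$ with $\hf_I\neq\emptyset$ that beats the crude $\binom{t}{\le d-g}\binom{2d}{\cdot}$ count by the necessary factorial and exponential factors, or a charging scheme with controlled multiplicity in which every escaping edge individually pays for $\Omega_d(k^{d-g+1})$ missing edges --- and no such ingredient is supplied or sketched.
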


It is conjectured in \cite{F25} that \eqref{ineq-6.2} holds for $k>c(k-t)$ for some absolute constant $c$.

\end{document}